\newtheoremstyle{dotless}{}{}{\itshape}{}{\bfseries}{}{}{}
\theoremstyle{dotless}
\theoremstyle{plain}
\newtheorem{thm}{Theorem}[section]
\newtheorem{lem}[thm]{Lemma}
\newtheorem{prop}[thm]{Proposition}
\newtheorem{cor}[thm]{Corollary}
\theoremstyle{definition}
\newtheorem{defn}[thm]{Definition}
\newtheorem{rem}[thm]{Remark}
\newtheorem{exa}[thm]{Example}
\newtheorem{prob}[thm]{Problem}
\newcommand{\N} {\mathbb{N}}
\newcommand{\R} {\mathbb{R}}
\newcommand{\C} {\mathbb{C}}
\newcommand{\D} {\mathbb{D}}
\DeclareMathOperator{\id}{id}
\DeclareMathOperator{\re}{Re}
\DeclareMathOperator{\im}{Im}
\providecommand{\differential}{\mathrm{d}}
\renewcommand{\d}{\differential}
\newcommand{\e}{\mathrm{e}}
\newcommand{\vertiii}[1]{{\left\vert\kern-0.25ex\left\vert\kern-0.25ex\left\vert #1 
    \right\vert\kern-0.25ex\right\vert\kern-0.25ex\right\vert}}
\newcommand{\fakephantomsection}{%
  \Hy@GlobalStepCount\Hy@linkcounter%
  \Hy@MakeCurrentHref{\@currenvir.\the\Hy@linkcounter}
  \Hy@raisedlink{\hyper@anchorstart{\@currentHref}\hyper@anchorend}%
}
\begin{document}

\title[Fourier hyperfunctions]{Vector-valued Fourier hyperfunctions and boundary values}
\author[K.~Kruse]{Karsten Kruse\,\orcidlink{0000-0003-1864-4915}}
\address{University of Twente, Department of Applied Mathematics, P.O. Box 217, 7500 AE Enschede, The Netherlands, and 
Hamburg University of Technology, Institute of Mathematics, Am Schwarzenberg-Campus~3, 21073 Hamburg, Germany}

\email{k.kruse@utwente.nl}

\subjclass[2020]{Primary 32A45, 46F15, Secondary 35A01, 46A13, 46A63, 46M20}

\keywords{hyperfunction, vector-valued, Fourier, boundary value, sheaf}

\date{\today}
\dedicatory{To the memory of Professor Pawe{\l} Doma\'nski.}
\begin{abstract}
This work is dedicated to the development of the theory of Fourier hyperfunctions in one variable 
with values in a complex non-necessarily metrisable locally convex Hausdorff space $E$. 
Moreover, necessary and sufficient conditions are described such that 
a reasonable theory of $E$-valued Fourier hyperfunctions exists. 
In particular, if $E$ is an ultrabornological PLS-space, such a theory is possible 
if and only if E satisfies the so-called property $(PA)$. 
Furthermore, many examples of such spaces having $(PA)$ resp.\ not having $(PA)$ are provided. 
We also prove that the vector-valued Fourier hyperfunctions can be realized as the sheaf 
generated by equivalence classes of certain compactly supported $E$-valued functionals 
and interpreted as boundary values of slowly increasing holomorphic functions. 
\end{abstract}

\maketitle

\section{Introduction}

The aim of the present work, which is the main result of the author's PhD thesis \cite{ich} 
with some improvements, is the development of the theory of Fourier hyperfunctions in one variable 
with values in a complex non-necessarily metrisable locally convex Hausdorff space $E$ and 
to find necessary and sufficient conditions such that a reasonable theory of $E$-valued Fourier hyperfunctions is possible. 
In particular, we show that, if $E$ is an ultrabornological PLS-space, such a theory exists if and only if 
$E$ satisfies the so-called property $(PA)$. It turns out that the vector-valued Fourier hyperfunctions can be realised 
as the sheaf generated by equivalence classes of certain compactly supported $E$-valued functionals and 
interpreted as boundary values of slowly increasing holomorphic functions. 
 
Scalar-valued Fourier hyperfunctions $\mathcal{R}$, indicated by Sato \cite{sato1958}(1958), 
were introduced by Kawai \cite{Kawai} in 1970. 
He constructed them as a flabby sheaf on $D^{d}$, 
where $D^{d}$ means the radial compactification of $\R^{d}$, $d\in\N$, 
using cohomology theory and H\"ormander's $L^{2}$-estimates \cite{H3}. 
He proved that the global sections are stable under Fourier transformation $\mathscr{F}$, 
i.e.\ $\mathscr{F}\colon \mathcal{R}(D^{d})\to\mathcal{R}(D^{d})$ is an isomorphism. 
This sheaf is a generalisation of the sheaf $\mathcal{B}$ of hyperfunctions on $\R^{d}$, 
which was developed by Sato \cite{Sato1} (and \cite{Sato2}); 
in particular, $\mathcal{R}_{\mid\R^{d}}=\mathcal{B}$ holds. 
Hyperfunctions emerged as an useful tool in the theory of partial differential equations 
(see \cite{Kom2}), in particular, in the solution of the abstract Cauchy problem. 
Komatsu developed the theory of Laplace hyperfunctions, a theory of operator-valued generalised functions 
with a suitable Laplace transform, more precisely, for operators in Banach spaces, 
and the abstract Cauchy problem was solved by a condition on the resolvent of the operator 
which characterised the generators of hyperfunction semigroups (see \cite{Kom3,Kom4,Kom5,Kom6}, 
Honda and Umeta \cite{honda2013}). 
This theory was improved and extended beyond operators in Banach spaces by Doma\'nski and Langenbruch (see \cite{D/L3,D/L4}). 
Since some partial differential equations can be taken as ordinary vector-valued equations (e.g.\ \cite{Ouchi1,Ouchi2}), 
the question arose whether there was a vector-valued counterpart for the theory of (Fourier) hyperfunctions. 
Whereas Schwartz achieved this in the analogous theory of distributions by tensor products \cite{Sch1}, 
one faces a crucial problem in the development of such a theory of vector-valued, in short, 
$E$-valued where $E$ is a locally convex Hausdorff space over $\C$, (Fourier) hyperfunctions, namely, 
the lack of a natural linear Hausdorff topology on the scalar-valued (Fourier) hyperfunctions 
(with the exception of the space of global sections in the case of Fourier hyperfunctions). 
Despite of this difficulty, Ion and Kawai \cite{Ion/Ka}(1975) developed a theory of hyperfunctions with values in Fr\'echet spaces, 
Ito and Nagamachi \cite{ItoNag1975_1,ItoNag1975_2}(1975) a theory of Fourier hyperfunctions 
with values in separable Hilbert spaces (see \cite{Ito1992_2} for general Hilbert spaces), 
which was used by Mugibayashi and Nagamachi (\cite{Mu/Na1,Mu/Na2}) 
for an axiomatic formulation of quantum field theory  
in terms of Fourier hyperfunctions, and Junker \cite{J}(1979) a theory of Fourier hyperfunctions 
with values in Fr\'echet spaces (cf.\ \cite{Ito1984,Ito1985,Ito1988,Ito1989,Ito1990_1,Ito1990_2,Ito1992_1}). 
Since Fourier hyperfunctions with values in non-metrisable spaces $E$ like the space of distributions, are of interest as well, 
there were some efforts to extend the theory of Fourier hyperfunctions to non-Fr\'echet spaces $E$ (see \cite{Ito2002}).
However, to the best of our knowledge the present paper is the only fully correct theory of $E$-valued Fourier hyperfunctions 
including non-Fr\'echet spaces $E$ (see \prettyref{rem:itos_luecke_1}, \prettyref{rem:itos_luecke_2}).

Doma\'nski and Langenbruch \cite{D/L}(2008) not only overcame these obstacles and 
developed a theory of vector-valued hyperfunctions beyond the class of Fr\'echet spaces, 
but also found natural limits of this kind of theory. They characterised in a large natural class of locally convex Hausdorff spaces 
those spaces for which a reasonable theory of $E$-valued hyperfunctions exists at all (see \cite[Theorem 8.9, p.\ 1139]{D/L}). 
To be more precise: they state that a reasonable theory of $E$-valued hyperfunctions should generate a flabby sheaf with the property 
that the set of sections supported by a compact subset $K\subset\R^{d}$ should coincide with $L(\mathscr{A}(K),E)$, 
the space of continuous linear operators from $\mathscr{A}(K)$ to $E$ where 
$\mathscr{A}(K)$ denotes the space of germs of real analytic functions on $K$. 
Transferring this condition to the theory of Fourier hyperfunctions, we are convinced that 
a reasonable theory of $E$-valued Fourier hyperfunctions (in one variable) should produce a flabby sheaf such that 
the set of sections supported by a compact subset $K\subset\overline{\R}$ should coincide with 
``the space of $E$-valued $\mathcal{P}_{\ast}$-functionals'' $L(\mathcal{P}_{\ast}(K),E)$ where 
$D^1=\overline{\R}$ is the radial compactification of $\R$ and $\mathcal{P}_{\ast}(K)$ 
the space of rapidly decreasing holomorphic germs near $K$ (see \prettyref{prop:DFS}). 
If one restricts such a sheaf to $\R$, the restricted sheaf fulfils the condition of Doma\'nski and Langenbruch 
for a reasonable theory of $E$-valued hyperfunctions, since $\mathcal{P}_{\ast}(K)=\mathscr{A}(K)$ for compact $K\subset\R$, 
which is desirable in the spirit of the property $\mathcal{R}_{\mid\R}=\mathcal{B}$ of the scalar-valued case. 
Furthermore, the global sections of such a sheaf are stable under Fourier transformation (see \prettyref{cor:Fourier-Trafo}). 
This implies that for those spaces $E$, for which a reasonable theory of $E$-valued hyperfunctions is impossible, 
a reasonable theory of $E$-valued Fourier hyperfunctions is impossible as well. 
A long list of examples of spaces $E$ for which a reasonable theory of $E$-valued Fourier hyperfunctions is possible 
resp.\ impossible can be found in \prettyref{ex:PLS_PA_DF_DN} resp.\ \prettyref{ex:PLS_non_PA}.

In the approach of Doma\'nski and Langenbruch the existence of an $E$-valued sheaf of hyperfunctions 
is deeply connected with the solvability of the $E$-valued Laplace equation; namely, if the $(d+1)$-dimensional Laplace operator
\[
\Delta_{d+1}\colon \mathcal{C}^{\infty}(\Omega,E)\to\mathcal{C}^{\infty}(\Omega,E)
\]
is surjective for every open set $\Omega\subset\R^{d+1}$ where $\mathcal{C}^{\infty}(\Omega,E)$ 
is the space of smooth $E$-valued functions on $\Omega$, then a reasonable theory of $E$-valued hyperfunctions 
on $\R^{d}$ is possible (see \cite[Theorem 6.9, p.\ 1125]{D/L}). 
For $E$-valued Fourier hyperfunctions in one variable the corresponding counterpart is the following. 
A complex locally convex Hausdorff space $E$ is called \emph{admissible} if the Cauchy--Riemann operator
\[
\overline{\partial}\colon \mathcal{E}^{exp}(\overline{\C}\setminus K,E)\to \mathcal{E}^{exp}(\overline{\C}\setminus K,E)
\]
is surjective for any compact set $K\subset\overline{\R}$ where $\overline{\C}\coloneq\overline{\R}+\mathsf{i}\,\R$ and 
$\mathcal{E}^{exp}(\overline{\C}\setminus K,E)$ is, roughly speaking, 
the space of slowly increasing smooth $E$-valued functions outside $K$ (see \prettyref{def:smooth_weighted_space}). 
$E$ is called \emph{strictly admissible} if $E$ is admissible and, in addition,
\[
\overline{\partial}\colon \mathcal{C}^{\infty}(\Omega,E)\to\mathcal{C}^{\infty}(\Omega,E)
\]
is surjective for any open set $\Omega\subset\C$. 
We prove that $E$ being strictly admissible yields to the existence of a reasonable theory 
of $E$-valued Fourier hyperfunctions in one variable (see \prettyref{thm:sheaf_flabby}).

The outline of the present paper is as follows. In \prettyref{sect:notation} we introduce some notations and 
preliminaries needed to phrase our concepts.
In \prettyref{sect:koethe_duality_fourier} we define the spaces $\mathcal{E}^{exp}(\overline{\C}\setminus K,E)$, 
its subspace $\mathcal{O}^{exp}(\overline{\C}\setminus K,E)$ of holomorphic functions and $\mathcal{P}_{\ast}(K)$. 
Further, we recall some of their properties and a kind of Silva--K\"othe--Grothendieck duality (see \prettyref{thm:duality}), 
give a boundary value representation of $L_{b}(\mathcal{P}_{\ast}(\overline{\R}),E)$ and define the Fourier 
transformation on this space. 
In \prettyref{sect:strictly_admissible} we collect some results on strict admissibility (see \prettyref{thm:examples_strictly_admiss}) 
and give many examples of strictly admissible spaces $E$.
In correspondence with the scalar-valued case, the $E$-valued Fourier hyperfunctions are defined in 
\prettyref{sect:duality_method} from two different points of view for a sequentially complete strictly admissible space $E$. 
On the one hand, as the sheaf generated by equivalence classes of $E$-valued $\mathcal{P}_{\ast}$-functionals, 
and on the other, as the sheaf of boundary values of the elements of $\mathcal{O}^{exp}(U\setminus\overline{\R},E)$. 
This is, to put it roughly, the space of holomorphic $E$-valued slowly increasing functions on $U$ 
outside an open set $\Omega\subset\overline{\R}$ where $U$ is an open set in $\overline{\C}$ 
with $U\cap\overline{\R}=\Omega$ (see \prettyref{def:bv}). 
The construction of these sheaves benefits from our kind of Silva--K\"othe--Grothendieck duality 
and it turns out that both sheaves are flabby and isomorphic (see \prettyref{thm:sheaf_flabby}), 
solving two problems of Ito (see \prettyref{lem:I_isom_op}, \prettyref{rem:itos_luecke_1}, \prettyref{cor:flabby}).
At the end of the fifth section, we show that, if $E$ is an ultrabornological PLS-space, 
a reasonable theory of $E$-valued Fourier hyperfunctions 
in one variable exists if and only if $E$ satisfies the property $(PA)$ (see \prettyref{thm:PA_necessary}). 

\section{Notation and Preliminaries}
\label{sect:notation}

The notation and preliminaries are essentially the same as in \cite[Section 2]{kruse2018_5,kruse2017,kruse2019_1}. 
We denote by $|\cdot|$ the Euclidean norm on $\R^2$ and $\C$, identify $\R^{2}$ and $\C$ as (normed) vector spaces, 
write $\D_{r}(z)\coloneq\{w\in\C\;|\;|w-z|<r\}$ for the open ball with radius $r>0$ around $z\in\C$  
and denote the restriction of a function $f\colon M\to\C$ to $K\subset M\subset\C$ by $f_{\mid K}$.
We define the distance of two subsets $M_{0}, M_{1} \subset\R^{2}$ w.r.t.\ $|\cdot|$ on $\R^{2}$ via
\[
  \d(M_{0},M_{1}) 
\coloneq\begin{cases}
   \inf_{x\in M_{0},\,y\in M_{1}}|x-y| &,\;  M_{0},\,M_{1} \neq \emptyset, \\
   \infty &,\;  M_{0}= \emptyset \;\text{or}\; M_{1}=\emptyset,
  \end{cases}
\]
and write $\d(z,M_{1})\coloneq\d(\{z\},M_{1})$ for $z\in\R^{2}$.   
We denote by $\overline{\R}\coloneq\R\cup\{\pm\infty\}$ the radial compactifaction of $\R$, i.e.\ 
we equip $\overline{\R}$ with the following topology. A set $\Omega\subset\overline{\R}$ is called open 
if $\Omega\cap\R$ is open in $(\R,|\cdot|)$ and, in addition, there exists $a\in\R$ such that $[-\infty,a]\subset\Omega$ 
resp.\ $[a,\infty]\subset\Omega$ if $-\infty\in\Omega$ resp.\ $\infty\in\Omega$. $\overline{\R}$ becomes a compact 
space with this topology. We set $\overline{\C}\coloneq\overline{\R}+\mathsf{i}\,\R$ and equip it with the product topology. 
For a topological space $X$ we denote the complement of a subset $M\subset X$ by $M^{C}\coloneq X\setminus M$, 
the closure of $M$ in $X$ by $\overline{M}$ and the boundary of $M$ by $\partial M$. 

By $E$ we always denote a non-trivial locally convex Hausdorff space over the field 
$\C$ ($\C$-lcHs) equipped with a directed fundamental system of seminorms $(p_{\alpha})_{\alpha\in \mathfrak{A}}$. 
If $E=\C$, then we set $(p_{\alpha})_{\alpha\in \mathfrak{A}}\coloneq\{|\cdot|\}$.
Further, we denote by $L(F,E)$ the space of continuous linear maps from 
a locally convex Hausdorff space $F$ to $E$ and sometimes use the notation $\langle T, f\rangle\coloneq T(f)$, $f\in F$,  
for $T\in L(F,E)$. If $E=\C$, we write $F'\coloneq L(F,\C)$ for the dual space of $F$. 
We denote by $L_{t}(F,E)$ the space $L(F,E)$ equipped with the locally convex topology of uniform convergence 
on the absolutely convex compact subsets of $F$ if $t=\kappa$, and on the bounded subsets of $F$ if $t=b$. 
The $\varepsilon$-product of Schwartz \cite[Chap.\ I, \S1, D\'{e}finition, p.\ 18]{Sch1} 
is defined by 
\[
F\varepsilon E\coloneq L_{e}(F_{\kappa}',E)
\]
where $L(F_{\kappa}',E)$ is equipped with the topology of uniform convergence on equicontinuous subsets of $F'$. 
By $F\widehat{\otimes}_{\pi} E$ we denote the completion of the projective tensor product $F\otimes_{\pi} E$. 
The space $F\widehat{\otimes}_{\pi} E$ is topologically isomorphic to $F\varepsilon E$ if $F$ and $E$ are complete 
and one of them is nuclear.

We recall the following well-known definitions concerning continuous partial differentiability of 
vector-valued functions (cf.\ \cite[p.\ 237]{kruse2018_2}). A function $f\colon\Omega\to E$ on an open set 
$\Omega\subset\R^{2}$ to $E$ is called continuously partially differentiable ($f$ is $\mathcal{C}^{1}$) 
if for the $n$th unit vector $e_{n}\in\R^{2}$ the limit
\[
\partial^{e_{n}}f(x)\coloneq \lim_{\substack{h\to 0\\ h\in\R, h\neq 0}}\frac{f(x+he_{n})-f(x)}{h}
\]
exists in $E$ for every $x\in\Omega$ and $\partial^{e_{n}}f$ 
is continuous on $\Omega$ ($\partial^{e_{n}}f$ is $\mathcal{C}^{0}$) for every $n\in\{1,2\}$. 
For $k\in\N$ a function $f$ is said to be $k$-times continuously partially differentiable 
($f$ is $\mathcal{C}^{k}$) if $f$ is $\mathcal{C}^{1}$ and all its first partial derivatives are $\mathcal{C}^{k-1}$.
A function $f$ is called infinitely continuously partially differentiable ($f$ is $\mathcal{C}^{\infty}$) 
if $f$ is $\mathcal{C}^{k}$ for every $k\in\N$.
The linear space of all functions $f\colon\Omega\to E$ which are $\mathcal{C}^{\infty}$ 
is denoted by $\mathcal{C}^{\infty}(\Omega,E)$ and we write $\mathcal{C}^{\infty}(\Omega)\coloneq\mathcal{C}^{\infty}(\Omega,\C)$. 
Let $f\in\mathcal{C}^{\infty}(\Omega,E)$. For $\beta=(\beta_{n})\in\N_{0}^{2}$ we set 
$\partial^{\beta_{n}}f\coloneq f$ if $\beta_{n}=0$, and
\[
\partial^{\beta_{n}}f
\coloneq\underbrace{\partial^{e_{n}}\cdots\partial^{e_{n}}}_{\beta_{n}\text{-times}}f
\]
if $\beta_{n}\neq 0$ as well as 
\[
\partial^{\beta}f
\coloneq\partial^{\beta_{1}}\partial^{\beta_{2}}f.
\]
Due to the vector-valued version of Schwarz' theorem $\partial^{\beta}f$ is independent of the order of the partial 
derivatives on the right-hand side and we call $|\beta|\coloneq\beta_{1}+\beta_{2}$ the order of differentiation. 

A continuous function $f\colon\Omega\to E$ on an open set 
$\Omega\subset\C$ to $E$ is called holomorphic if the limit
\[
\partial_{\C}^{1}f(z_{0})
\coloneq\lim_{\substack{h\to 0\\ h\in\C, h\neq 0}}\frac{f(z_{0}+h)-f(z_{0})}{h}
\]
exists in $E$ for every $z_{0}\in\Omega$. As before we define derivatives of higher order recursively, i.e.\ 
for $n\in\N_{0}$ we set $\partial_{\C}^{0}f\coloneq f$ and $\partial_{\C}^{n}f\coloneq\partial_{\C}^{1}\partial_{\C}^{n-1}f$, 
$n\geq 1$, if the corresponding limits exist. 
The linear space of all functions $f\colon\Omega\to E$ which are holomorphic 
is denoted by $\mathcal{O}(\Omega,E)$ and we write $\mathcal{O}(\Omega)\coloneq\mathcal{O}(\Omega,\C)$. 
If $E$ is locally complete and $f\in\mathcal{O}(\Omega,E)$, then $\partial_{\C}^{n}f(z_{0})$ exists in $E$ 
for every $z_{0}\in\Omega$ and $n\in\N_{0}$ by \cite[2.2 Theorem and Definition, p.\ 18]{grosse-erdmann1992} 
and \cite[5.2 Theorem, p.\ 35]{grosse-erdmann1992}. $E$ is called locally compete if every closed disk in $E$ 
is a Banach disk (see \cite[10.2.1 Proposition, p.\ 197]{Jarchow}). In particular, every sequentially complete space 
is locally complete.

For the convenience of the reader we recall the definition of a (pre)sheaf and a flabby sheaf.

\begin{defn}[{(pre)sheaf, \cite[1.1 Definition, p.\ 1, 1.7, p.\ 6]{Bre}}]
Let $X$ be a topological space and for every open $U\subset X$ let there be a vector space 
$\mathcal{F}(U)$ such that $\mathcal{F}(\varnothing)=0$ 
and for every pair $V\subset U$ of open sets in $X$ let there be a linear map 
$R_{U,V}\colon \mathcal{F}(U)\to\mathcal{F}(V)$. Let 
$\mathcal{F}\coloneq\{\mathcal{F}(U)\;|\;U\subset X\;\text{open}\}$ and 
$R\coloneq R^{\mathcal{F}}\coloneq\{R_{U,V}\;|\;U,V\subset X\;\text{open}\}$. 
The tuple $(\mathcal{F},R)$ is called a \emph{presheaf} on $X$ and 
the maps in $R$ \emph{restrictions} if:
\begin{enumerate}
\item[(i)] $R_{U,U}=\id$ for every open $U$, and
\item[(ii)] $R_{V,W}\circ R_{U,V}=R_{U,W}$ for every open $W\subset V\subset U$.
\end{enumerate}
A presheaf $(\mathcal{F},R)$ is a \emph{sheaf} on $X$ if for every family of open sets 
$\{U_{j}\;|\;j\in J\}$ with $U\coloneq\bigcup_{j\in J} U_{j}$ the following is valid:
\begin{enumerate}
\item[(S1)] If $f\in\mathcal{F}(U)$ is such that $R_{U,U_{j}}(f)=0$ for all $j\in J$, then $f=0$.
\item[(S2)] Let $f_{j}\in\mathcal{F}(U_{j})$, $j\in J$, be given such that for every pair $(j,i)\in J^2$
\[
R_{U_{j},U_{j}\cap U_{i}}(f_{j})=R_{U_{i},U_{j}\cap U_{i}}(f_{i})
\]
holds. Then there is $f\in\mathcal{F}(U)$ such that $R_{U,U_{j}}(f)=f_{j}$.
\end{enumerate}
\end{defn}

\begin{defn}[{flabby, \cite[5.1 Definition, p.\ 47]{Bre}}]
Let $X$ be a topological space. A sheaf $(\mathcal{F},R)$ on $X$ is called \emph{flabby} if 
$R_{X,U}\colon\mathcal{F}(X)\to\mathcal{F}(U)$ is surjective for every open set $U\subset X$.
\end{defn}

The following simple observation will turn out to be a useful tool in the proof of \prettyref{thm:sheaf_flabby} c).

\begin{prop}[{\cite[6.6 Proposition, p.\ 115]{ich}}]\label{prop:sheaf_presheaf_isom}
Let $X$ be a topological space, $(\mathcal{G},R^{\mathcal{G}})$ a presheaf and $(\mathcal{F},R^{\mathcal{F}})$ a sheaf on $X$. 
Let $h\colon\mathcal{G}\to\mathcal{F}$ be a homomorphism of presheaves such that 
$h_{\Omega}\colon \mathcal{G}(\Omega)\to\mathcal{F}(\Omega)$ is an isomorphism for every open set $\Omega\subset X$. 
Then $(\mathcal{G},R^{\mathcal{G}})$ is a sheaf (and $h$ an isomorphism of sheaves).
\end{prop}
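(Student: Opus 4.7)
The plan is to transfer the two sheaf axioms (S1) and (S2) from $\mathcal{F}$ to $\mathcal{G}$ using the naturality of the presheaf homomorphism $h$ and the fact that each $h_{\Omega}$ is bijective. The key identity, valid by definition of a presheaf homomorphism, is
\[
h_{V}\circ R^{\mathcal{G}}_{U,V}=R^{\mathcal{F}}_{U,V}\circ h_{U}
\]
for every pair $V\subset U$ of open sets in $X$.

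For (S1) I would take an open cover $U=\bigcup_{j\in J}U_{j}$ and $f\in\mathcal{G}(U)$ with $R^{\mathcal{G}}_{U,U_{j}}(f)=0$ for all $j\in J$. Applying $h_{U_{j}}$ and using the displayed identity I get $R^{\mathcal{F}}_{U,U_{j}}(h_{U}(f))=0$ for every $j$. The sheaf axiom (S1) for $\mathcal{F}$ yields $h_{U}(f)=0$, and the injectivity of $h_{U}$ gives $f=0$.

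For (S2) I would start from a compatible family $f_{j}\in\mathcal{G}(U_{j})$ and set $g_{j}:=h_{U_{j}}(f_{j})\in\mathcal{F}(U_{j})$. The compatibility of $(f_{j})_{j\in J}$ together with the naturality identity translates into the compatibility of $(g_{j})_{j\in J}$ in $\mathcal{F}$, so by (S2) for $\mathcal{F}$ there exists $g\in\mathcal{F}(U)$ with $R^{\mathcal{F}}_{U,U_{j}}(g)=g_{j}$ for all $j\in J$. Since $h_{U}$ is bijective, I can define $f:=h_{U}^{-1}(g)\in\mathcal{G}(U)$. Using the naturality identity once more and the injectivity of each $h_{U_{j}}$, I obtain $R^{\mathcal{G}}_{U,U_{j}}(f)=f_{j}$ for all $j\in J$, which is what (S2) demands.

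Once $\mathcal{G}$ is known to be a sheaf, the fact that $h$ is an isomorphism of sheaves is automatic: it is a homomorphism of presheaves whose components $h_{\Omega}$ are all bijective by hypothesis, and the componentwise inverses $h_{\Omega}^{-1}$ automatically form a presheaf homomorphism in the reverse direction (again by the naturality identity). I do not expect any genuine obstacle here; the only care needed is bookkeeping of the naturality identity and making sure both injectivity and surjectivity of the $h_{\Omega}$ are used in the correct places (injectivity for (S1) and for the well-definedness of the extension in (S2), surjectivity for constructing the preimage $f=h_{U}^{-1}(g)$).
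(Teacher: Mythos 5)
Your proposal is correct and follows essentially the same route as the paper's proof: transfer (S1) via naturality and injectivity of $h_{U}$, and transfer (S2) by pushing the compatible family forward, gluing in $\mathcal{F}$, and pulling back with $h_{U}^{-1}$. The only cosmetic difference is that the paper first verifies explicitly that the componentwise inverses $h_{\Omega}^{-1}$ form a presheaf homomorphism and then uses that to compute $R^{\mathcal{G}}_{U,U_{j}}(f)=f_{j}$, whereas you obtain the same identity by applying $h_{U_{j}}$ and invoking its injectivity --- these are equivalent.
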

\begin{proof}
First, we remark that $h\colon\mathcal{G}\to\mathcal{F}$ is a homomorphism of presheaves (see \cite[p.\ 8]{Bre}), i.e.\ the diagram
\[
\begin{xy}
\xymatrix{
\mathcal{G}(\Omega) \ar[r]^{h_{\Omega}} \ar[d]_{R^{\mathcal{G}}_{\Omega,\Omega_{1}}} & \mathcal{F}(\Omega) \ar[d]^{R^{\mathcal{F}}_{\Omega,\Omega_{1}}} \\
\mathcal{G}(\Omega_{1}) \ar[r]_{h_{\Omega_{1}}}                                      &   \mathcal{F}(\Omega_{1})
}
\end{xy}
\]
commutes for open sets $\Omega_{1}\subset\Omega\subset X$. Let $f\in\mathcal{F}(\Omega)$. 
Since $h_{\Omega}$ and $h_{\Omega_{1}}$ are isomorphisms by our assumption, we have
\[
 h^{-1}_{\Omega_{1}}(f_{\mid\Omega_{1}})
=h^{-1}_{\Omega_{1}}(h_{\Omega}(h^{-1}_{\Omega}(f))_{\mid\Omega_{1}})
=h^{-1}_{\Omega_{1}}(h_{\Omega_{1}}(h^{-1}_{\Omega}(f)_{\mid\Omega_{1}}))
=h^{-1}_{\Omega}(f)_{\mid\Omega_{1}}
\]
since $h$ is a homomorphism of presheaves, which means that the diagram
\[
\begin{xy}
\xymatrix{
\mathcal{G}(\Omega) \ar[d]_{R^{\mathcal{G}}_{\Omega,\Omega_{1}}} & \ar[l]_{h^{-1}_{\Omega}} \mathcal{F}(\Omega) \ar[d]^{R^{\mathcal{F}}_{\Omega,\Omega_{1}}} \\
\mathcal{G}(\Omega_{1})                                          & \mathcal{F}(\Omega_{1}) \ar[l]^{h^{-1}_{\Omega_{1}}}
}
\end{xy}
\]
commutes as well, so $h^{-1}$ is homomorphism of presheaves.

$(S1)$: Let $\{\Omega_{j}\;|\;j\in J\}$ be a familiy of open subsets of $X$ and $\Omega\coloneq\bigcup_{j\in J}\Omega_{j}$. 
Let $f\in\mathcal{G}(\Omega)$ such that $f_{\mid\Omega_{j}}=0$ for all $j\in J$. 
Then $h_{\Omega}(f)\in\mathcal{F}(\Omega)$ and
\[
h_{\Omega}(f)_{\mid\Omega_{j}}=h_{\Omega}(f_{\mid\Omega_{j}})=h_{\Omega}(0)=0
\]
for all $j\in J$ due to the assumption and since $h$ is a homomorphism of presheaves. 
As $\mathcal{F}$ is a sheaf, hence satisifies $(S1)$, we obtain $h_{\Omega}(f)=0$. 
Due to the injectivity of $h_{\Omega}$, we get $f=0$.

$(S2)$: Let $\{\Omega_{j}\;|\;j\in J\}$ and $\Omega$ be like above. Let $f_{j}\in \mathcal{G}(\Omega_{j})$ 
such that ${f_{j}}_{\mid\Omega_{j}\cap\Omega_{k}}={f_{k}}_{\mid\Omega_{j}\cap\Omega_{k}}$ for all $j,k\in J$. 
Then $h_{\Omega_{j}}(f_{j})\in\mathcal{F}(\Omega_{j})$ and
\[
 h_{\Omega_{j}}(f_{j})_{\mid\Omega_{j}\cap\Omega_{k}}-h_{\Omega_{k}}(f_{k})_{\mid\Omega_{j}\cap\Omega_{k}}
=h_{\Omega_{j}\cap\Omega_{k}}({f_{j}}_{\mid\Omega_{j}\cap\Omega_{k}})
 -h_{\Omega_{j}\cap\Omega_{k}}({f_{k}}_{\mid\Omega_{j}\cap\Omega_{k}})
=0
\]
for all $j,k\in J$ by the assumption and since $h$ is a homomorphism of presheaves. 
As $\mathcal{F}$ is a sheaf, hence satisifies $(S2)$, there exists $G\in \mathcal{F}(\Omega)$ 
such that $G_{\mid\Omega_{j}}=h_{\Omega_{j}}(f_{j})$ for every $j\in J$. 
Now, we define $F\coloneq h^{-1}_{\Omega}(G)\in\mathcal{G}(\Omega)$. By virtue of the remark in the beginning, we gain
\[
 F_{\mid\Omega_{j}}=h^{-1}_{\Omega}(G)_{\mid\Omega_{j}}
=h^{-1}_{\Omega_{j}}(G_{\mid\Omega_{j}})
=h^{-1}_{\Omega_{j}}(h_{\Omega_{j}}(f_{j}))
=f_{j}
\]
for all $j\in J$. Therefore, $\mathcal{G}$ is a sheaf and thus $h$ an isomorphism of sheaves.
\end{proof}

For the notions not explained we refer the reader to the literature. 
For the classical theory of (Fourier) hyperfunctions we refer the reader to \cite{graf2010,imai1992,Kan,Mori1,Schapira,stankovic1997}, for the sheaf theory to \cite{Bre,Kultze}, 
for the theory of locally convex spaces to \cite{F/W/Buch,Jarchow,meisevogt1997}, 
for PLS-spaces to \cite{Dom2} and for the theory of $\varepsilon$-products and tensor products to 
\cite{Defant,Jarchow,Kaballo,kruse2017,kruse2023}.

\section{Silva--K\"othe--Grothendieck duality, boundary values and Fourier transformation}
\label{sect:koethe_duality_fourier}

This section is devoted to a duality theorem, a resulting boundary value representation 
and the Fourier transformation.
We recall the well-known topological Silva--K\"othe--Grothendieck isomorphism
\begin{equation}\label{eq:koethe_iso}
\mathcal{O}(\C\setminus K,E)/\mathcal{O}(\C,E)\cong L_{b}(\mathscr{A}(K),E)
\end{equation}
for a quasi-complete $\C$-lcHs $E$ and compact $\varnothing\neq K\subset\R$
(see e.g.\ \cite[p.\ 6]{SebastiaoeSilva1950}, \cite[Proposition 1, p.\ 46]{Grothendieck1953}, 
\cite[Satz 9, p.\ 90]{Tillmann1955}, \cite[\S27.4, p.\ 375--378]{Koethe1969}, 
\cite[Theorem 2.1.3, p.\ 25]{Mori1}).  
Here $\mathcal{O}(\C\setminus K,E)$ is equipped with the compact-open topology, 
the quotient space with the induced quotient topology and $\mathscr{A}(K)$ 
is the space of germs of real analytic functions on $K$ with its inductive limit topology. 
We introduce the spaces $\mathcal{E}^{exp}(\overline{\C}\setminus K,E)$, 
$\mathcal{O}^{exp}(\overline{\C}\setminus K,E)$ and $\mathcal{P}_{\ast}(K)$ for a compact 
set $K\subset\overline{\R}$ in this section 
which will be used in the counterpart of the Silva--K\"othe--Grothendieck isomorphism 
with $\mathcal{O}(\C\setminus K,E)$ and $\mathscr{A}(K)$ replaced by 
$\mathcal{O}^{exp}(\overline{\C}\setminus K,E)$ and $\mathcal{P}_{\ast}(K)$, respectively.
Then we come to a boundary value representation of $L_{b}(\mathcal{P}_{\ast}(\overline{\R},E)$
and define the Fourier transformation on it.

For a compact set $K\subset\overline{\R}$ and $t\in\R$, $t\geq 1$, we define the open sets
\begin{align*}
U_{t}(K)&\coloneq\phantom{\cup} \{ z \in \C\;| \; \d(z,K\cap\C)< \tfrac{1}{t}\} \\
&\phantom{\coloneq}\cup
\begin{cases}
\varnothing &, K \subset \R, \\
(t,\infty)+\mathsf{i} (-\tfrac{1}{t}, \tfrac{1}{t}) &, \infty \in K, \, -\infty \notin K, \\ 
(-\infty, -t)+\mathsf{i} (-\tfrac{1}{t}, \tfrac{1}{t}) &,\infty \notin K, \,  -\infty \in K,\\ 
\bigl((-\infty, -t)\cup (t,\infty)\bigr)+\mathsf{i} (-\tfrac{1}{t}, \tfrac{1}{t}) &, \pm\infty \in K,  
\end{cases}
\end{align*}
\begin{center}
 \includegraphics[scale=0.85]{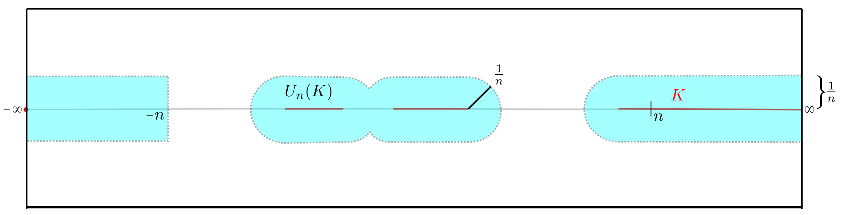}
 \captionsetup{type=figure}
 \caption{$U_{n}(K)$ for $\pm\infty\in K$ (cf.\ Fig.\ 1 \cite[p.\ 83]{kruse2019_2})}
\end{center}
and
\[
S_{t}(K)\coloneq \left(\overline{U_{t}(K)}\right)^{C} \cap \{z\in \C \; | \; |\im(z)| < t \},\quad t>1, 
\]
where the closure and the complement are taken in $\C$.
\begin{center}
 \includegraphics[scale=0.85]{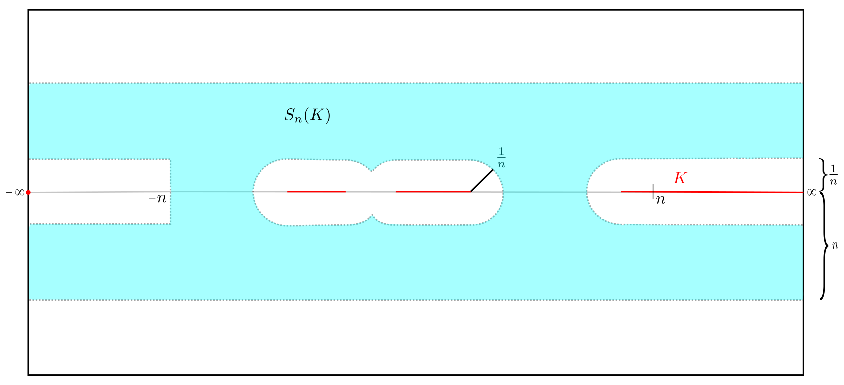}
 \captionsetup{type=figure}
 \caption{$S_{n}(K)$ for $\pm\infty\in K$ (cf.\ Fig.\ 2 \cite[p.\ 83]{kruse2019_2})}
\end{center}
 
\begin{defn}[{\cite[3.2 Definition, p.\ 12--13]{ich}}]\label{def:smooth_weighted_space}
Let $K\subset \overline{\R}$ be a compact set and $E$ a $\C$-lcHs. 
We define the space of $E$-valued slowly increasing smooth functions outside $K$ by
\[
\mathcal{E}^{exp}(\overline{\C}\setminus K,E)\coloneq\{ f\in \mathcal{C}^{\infty}(\C\setminus K, E)\; | 
\;\forall\; n \in \N,\,n\geq 2,\,m\in\N_{0},\,\alpha\in\mathfrak{A}:\;|f|_{K,n,m,\alpha}<\infty\}
\]
where
\[
|f|_{K,n,m,\alpha}\coloneq\sup_{\substack{z\in S_{n}(K)\\ \beta\in\N_{0}^{2},|\beta|\leq m}}
p_{\alpha}(\partial^{\beta}f(z))\e^{-(1/n)|\re(z)|}.
\]
We define the space of $E$-valued slowly increasing holomorphic functions outside $K$ by
\[
\mathcal{O}^{exp}(\overline{\C}\setminus K,E)\coloneq\{ f\in \mathcal{O}(\C\setminus K, E)\; | 
\;\forall\; n \in \N,\,n\geq 2,\,\alpha\in\mathfrak{A}:\;|f|_{K,n,\alpha}<\infty\}
\]
where 
\[
|f|_{K,n,\alpha}\coloneq|f|_{K,n,0,\alpha}=\sup_{z\in S_{n}(K)}p_{\alpha}(f(z))\e^{-(1/n)|\re(z)|}.
\]
Further, we set $\mathcal{E}^{exp}(\overline{\C}\setminus K)\coloneq\mathcal{E}^{exp}(\overline{\C}\setminus K,\C)$ 
and $\mathcal{O}^{exp}(\overline{\C}\setminus K)\coloneq\mathcal{O}^{exp}(\overline{\C}\setminus K,\C)$.
\end{defn}

We exclude the case $n=1$ because
\[
 \left(\overline{U_{1}(\overline{\R})}\right)^{C}\cap\{z\in\C\;|\;|\im(z)|<1\}=\varnothing
\]
but we could also include the case $n=1$ with definition $S_{1}(K)\coloneq S_{t_{0}}(K)$ for some $t_0\in (1,2)$ 
which would not change the spaces $\mathcal{E}^{exp}(\overline{\C}\setminus K,E)$ 
and $\mathcal{O}^{exp}(\overline{\C}\setminus K,E)$. 
In the literature the symbols $\widetilde{\mathcal{E}}(\overline{\C}\setminus K,E)$ resp.\ 
$\widetilde{\mathcal{O}}(\overline{\C}\setminus K,E)$ are also used for 
$\mathcal{E}^{exp}(\overline{\C}\setminus K,E)$ resp.\ $\mathcal{O}^{exp}(\overline{\C}\setminus K,E)$ 
(see \cite[1.2 Definition, p.\ 5]{J}).

Several times we will use the following useful relation between real and complex partial derivatives of 
a holomorphic function.

\begin{prop}[{\cite[Proposition 7.1, p.\ 270]{kruse2019_4}}]
Let $\Omega\subset\C$ be open, $E$ a locally complete $\C$-lcHs and $f\in\mathcal{O}(\Omega,E)$. 
Then $f\in\mathcal{C}^{\infty}(\Omega,E)$ and
\begin{equation}\label{eq:real.compl.part.deriv.1}
\partial^{\beta}f(z)=\mathsf{i}^{\beta_{2}}\partial^{|\beta|}_{\C}f(z),\quad z\in\Omega,\,\beta\in\N_{0}^{2}.
\end{equation}
\end{prop}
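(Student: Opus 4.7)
The plan is to reduce everything to the one-dimensional case via the two standard difference quotients along the real and imaginary axes, then iterate.

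First I would invoke the two results from Grosse-Erdmann already cited in the excerpt: for $E$ locally complete and $f\in\mathcal{O}(\Omega,E)$, all complex derivatives $\partial_{\C}^{n}f$ exist on $\Omega$ and each one is again holomorphic on $\Omega$, in particular continuous. The base case is to identify the first real partial derivatives. For $z\in\Omega$ and $h\in\R\setminus\{0\}$ small, the definition of the complex derivative applied along the real direction gives
\[
\frac{f(z+he_{1})-f(z)}{h}=\frac{f(z+h)-f(z)}{h}\xrightarrow[h\to 0]{}\partial_{\C}^{1}f(z),
\]
so $\partial^{e_{1}}f(z)=\partial_{\C}^{1}f(z)$. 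Along the imaginary direction,
\[
\frac{f(z+he_{2})-f(z)}{h}=\mathsf{i}\cdot\frac{f(z+\mathsf{i} h)-f(z)}{\mathsf{i} h}\xrightarrow[h\to 0]{}\mathsf{i}\,\partial_{\C}^{1}f(z),
\]
so $\partial^{e_{2}}f(z)=\mathsf{i}\,\partial_{\C}^{1}f(z)$. Since $\partial_{\C}^{1}f$ is continuous on $\Omega$, both first partial derivatives are continuous, so $f\in\mathcal{C}^{1}(\Omega,E)$.

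Next I would induct on $|\beta|$. The key observation is that the two first partial derivatives $\partial^{e_{1}}f=\partial_{\C}^{1}f$ and $\partial^{e_{2}}f=\mathsf{i}\,\partial_{\C}^{1}f$ are themselves elements of $\mathcal{O}(\Omega,E)$ (scalar multiples of holomorphic functions are holomorphic). Thus the induction hypothesis applies to them, showing each is $\mathcal{C}^{\infty}$ with the analogous formula; so $f$ is $\mathcal{C}^{\infty}$. For the formula, using the definition $\partial^{\beta}f=\partial^{\beta_{1}}\partial^{\beta_{2}}f$ together with the vector-valued Schwarz theorem (already quoted in the preliminaries to justify independence of the order) I compute inductively
\[
\partial^{\beta}f
=\underbrace{\partial^{e_{1}}\!\cdots\partial^{e_{1}}}_{\beta_{1}}
 \underbrace{\partial^{e_{2}}\!\cdots\partial^{e_{2}}}_{\beta_{2}}f
=\mathsf{i}^{\beta_{2}}\partial_{\C}^{\beta_{1}+\beta_{2}}f
=\mathsf{i}^{\beta_{2}}\partial_{\C}^{|\beta|}f,
\]
each factor $\partial^{e_{2}}$ contributing a factor $\mathsf{i}$ and each $\partial^{e_{n}}$ replacing one unit of total order by $\partial_{\C}^{1}$.

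I do not expect a serious obstacle here: local completeness enters only to guarantee existence of $\partial_{\C}^{n}f$ and their holomorphy (so that continuity is automatic), and everything else is an elementary computation with difference quotients. The only point that needs mild care is the $\mathcal{C}^{1}$ step, where continuity of the first partial derivatives must be checked before one is allowed to iterate; this is handled by the holomorphy of $\partial_{\C}^{1}f$ noted above.
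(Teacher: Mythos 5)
Your proof is correct: the identification of $\partial^{e_{1}}f=\partial_{\C}^{1}f$ and $\partial^{e_{2}}f=\mathsf{i}\,\partial_{\C}^{1}f$ via restricted difference quotients, the use of the Grosse-Erdmann results (which need local completeness) to get existence and holomorphy, hence continuity, of all $\partial_{\C}^{n}f$, and the bootstrap to $\mathcal{C}^{\infty}$ with $\partial^{\beta}f=\mathsf{i}^{\beta_{2}}\partial_{\C}^{|\beta|}f$ is exactly the standard argument. The paper itself only cites this proposition from \cite[3.23 Proposition]{kruse2019_4} without reproducing a proof, so there is no in-paper argument to compare against; your route is the natural one and has no gaps.
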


For complete $E$ this was already observed in \cite[3.4 Lemma, p.\ 17]{ich}.

\begin{prop}\label{prop:Cauchy_estimates}
Let $E$ be a locally complete $\C$-lcHs, $a_{1}\leq a_{2}<0$, 
$\Omega_{1}\subset\Omega_{2}\subset\C$ be open and let there exist $0<r\leq 1$ 
such that $\overline{\D_{r}(z)}\subset\Omega_{2}$ for all $z\in\Omega_{1}$.
Then for all $f\in\mathcal{O}(\Omega_{2},E)$, $m\in\N_{0}$ and $\alpha\in\mathfrak{A}$ it holds that
\[
    \sup_{\substack{z \in \Omega_{1}\\ k \in \N_{0},k \leq m}}p_{\alpha}(\partial_{\C}^{k}f(z))\e^{a_{1}|\re(z)|}
\leq \e^{-a_{1}r}\frac{m!}{r^{m}}\sup_{z \in \Omega_{2}}p_{\alpha}(f(z))\e^{a_{2}|\re(z)|}.
\]
\end{prop}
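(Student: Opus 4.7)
The plan is to pointwise apply the vector-valued Cauchy integral formula on disks of radius $r$ and then insert the exponential weight, exploiting the inequality $|\re w|\le |\re z|+r$ for $w\in\partial\D_{r}(z)$ together with $a_{1}\le a_{2}<0$ and $r\le 1$.

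First I would fix $z\in\Omega_{1}$ and $k\in\{0,\dots,m\}$. Since $E$ is locally complete and $\overline{\D_{r}(z)}\subset\Omega_{2}$ with $f\in\mathcal{O}(\Omega_{2},E)$, the vector-valued Cauchy integral formula (valid in locally complete lcHs, see \cite[2.2 Theorem and Definition, p.\ 18]{grosse-erdmann1992} and \cite[5.2 Theorem, p.\ 35]{grosse-erdmann1992}, already cited in this section) yields
\[
\partial_{\C}^{k}f(z)=\frac{k!}{2\pi\mathsf{i}}\int_{\partial\D_{r}(z)}\frac{f(w)}{(w-z)^{k+1}}\,\d w,
\]
and applying $p_{\alpha}$ with the standard ML-inequality gives the unweighted Cauchy estimate
\[
p_{\alpha}(\partial_{\C}^{k}f(z))\le \frac{k!}{r^{k}}\sup_{w\in\partial\D_{r}(z)}p_{\alpha}(f(w)).
\]

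Next I would insert the weight. For $w\in\partial\D_{r}(z)$ one has $|\re(w)|\le |\re(z)|+r$, so since $a_{2}<0$,
\[
p_{\alpha}(f(w))\le e^{-a_{2}|\re(w)|}\sup_{v\in\Omega_{2}}p_{\alpha}(f(v))e^{a_{2}|\re(v)|}\le e^{-a_{2}(|\re(z)|+r)}\sup_{v\in\Omega_{2}}p_{\alpha}(f(v))e^{a_{2}|\re(v)|}.
\]
Multiplying the Cauchy estimate by $e^{a_{1}|\re(z)|}$ and collecting the exponents gives a factor $e^{-a_{2}r}e^{(a_{1}-a_{2})|\re(z)|}$. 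Using $a_{1}\le a_{2}<0$, both $(a_{1}-a_{2})|\re(z)|\le 0$ and $-a_{2}\le -a_{1}$, so this factor is bounded by $e^{-a_{1}r}$.

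Finally, I would absorb the dependence on $k$: since $r\le 1$ and $k\le m$, we have $k!/r^{k}\le m!/r^{m}$. Taking the supremum over $z\in\Omega_{1}$ and $k\le m$ on the left-hand side yields the claimed inequality. There is no real obstacle; the only point requiring care is the vector-valued Cauchy integral formula, which is exactly the place where the local completeness assumption on $E$ is used, and the correct bookkeeping of the signs in the exponentials via the ordering $a_{1}\le a_{2}<0$.
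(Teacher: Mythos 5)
Your proposal is correct and follows essentially the same route as the paper: the vector-valued Cauchy inequality on disks of radius $r$ (which the paper simply cites rather than rederiving from the integral formula), combined with the same sign bookkeeping for the exponential weights via $a_{1}\leq a_{2}<0$ and $k!/r^{k}\leq m!/r^{m}$ for $r\leq 1$. No gaps.
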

\begin{proof}
Let $m\in\N_{0}$. We note that
\[
a_{1}|\re(z)|\leq a_{1}|\re(\zeta)|-a_{1}|\re(z)-\re(\zeta)|\leq a_{2}|\re(\zeta)|-a_{1}r,
\quad z\in\Omega_{1},\;\zeta\in\overline{\D_{r}(z)}.
\]
By Cauchy's inequality \cite[Corollary 5.3 a), p.\ 263]{kruse2019_4} we have 
\[
p_{\alpha}(\partial_{\C}^{k}f(z)) \leq \frac{k!}{r^{k}}\sup_{\zeta\in\C,\,|\zeta-z|=r}p_{\alpha}(f(\zeta)),\quad z\in\Omega_{1},
\]
for all $f\in\mathcal{O}(\Omega_{2},E)$ and $\alpha\in\mathfrak{A}$, which implies 
\begin{flalign*}
&\hspace{0.35cm}\sup_{\substack{z \in \Omega_{1}\\ k \in \N_{0},k \leq m}}p_{\alpha}(\partial_{\C}^{k}f(z))\e^{a_{1}|\re(z)|}
 \leq\sup_{\substack{z \in \Omega_{1}\\ k \in \N_{0},k \leq m}}
 \frac{k!}{r^{k}}\sup_{\zeta\in\C,\,|\zeta-z|=r}p_{\alpha}(f(\zeta))\e^{a_{1}|\re(z)|}\\
&\leq \e^{-a_{1}r}\frac{m!}{r^{m}}\sup_{\zeta\in\Omega_{2}}p_{\alpha}(f(\zeta))\e^{a_{2}|\re(\zeta)|}.
\end{flalign*}
\end{proof}

\prettyref{prop:Cauchy_estimates} in combination with \eqref{eq:real.compl.part.deriv.1} implies 
that the topology of $\mathcal{O}^{exp}(\overline{\C}\setminus K,E)$ coincides 
with the induced topology of $\mathcal{E}^{exp}(\overline{\C}\setminus K,E)$ for 
compact $K\subset\overline{\R}$ and locally complete $E$. 
For Fr\'echet spaces $E$ this can also be found in \cite[1.4 Lemma (1), p.\ 5]{J} 
and for complete $E$ in \cite[3.6 Theorem (4), p.\ 21]{ich}. 

\begin{rem}\label{rem:eps_prod_nuclear}
Let $K\subset\overline{\R}$ be compact. 
\begin{enumerate}
\item[a)] The spaces $\mathcal{E}^{exp}(\overline{\C}\setminus K)$ and $\mathcal{O}^{exp}(\overline{\C}\setminus K)$ 
are nuclear Fr\'echet spaces, e.g.\ by \cite[Proposition 3.7, p.\ 240]{kruse2018_2} and \cite[Theorem 3.1, p.\ 188]{kruse2018_4} 
combined with \cite[Remark 2.7, p.\ 178--179]{kruse2018_4} and \cite[Example 2.8 (ii), p.\ 179]{kruse2018_4}. 
\item[b)] We have the topological isomorphisms
\[
\mathcal{E}^{exp}(\overline{\C}\setminus K,E)\cong\mathcal{E}^{exp}(\overline{\C}\setminus K)\varepsilon E
\quad\text{and}\quad
\mathcal{O}^{exp}(\overline{\C}\setminus K,E)\cong\mathcal{O}^{exp}(\overline{\C}\setminus K)\varepsilon E
\]
for a locally complete $\C$-lcHs $E$ by \cite[3.23 Corollary c), p.\ 316]{kruse2018_3} and thus
\[
\mathcal{E}^{exp}(\overline{\C}\setminus K,E)\cong\mathcal{E}^{exp}(\overline{\C}\setminus K)\widehat{\otimes}_{\pi} E
\quad\text{and}\quad
\mathcal{O}^{exp}(\overline{\C}\setminus K,E)\cong\mathcal{O}^{exp}(\overline{\C}\setminus K)\widehat{\otimes}_{\pi} E
\]
for a complete $\C$-lcHs $E$ due to nuclearity. 
\end{enumerate}
\end{rem}

Part a) of the preceding remark can also be found in \cite[1.4 Lemma (2), p.\ 5]{J} and \cite[1.6 Folgerung, p.\ 7]{J} 
as well as \cite[3.6 Theorem (2), p.\ 21]{ich} and \cite[3.7 Theorem, p.\ 23]{ich}. 
Part b) for Fr\'echet spaces $E$ is also given in \cite[1.7 Satz, p.\ 8--9]{J} and for complete spaces $E$ in 
\cite[3.11 Theorem, p.\ 31]{ich} and \cite[3.12 Corollary, p.\ 35]{ich}.

\begin{prop}\label{prop:DFS}
Let $K\subset\overline{\R}$ be a non-empty compact set. For $n\in\N$ let
\[
\mathcal{O}_{n}(\overline{U_{n}(K)})\coloneq\{f\in\mathcal{O}(U_{n}(K))\cap\mathcal{C}^{0}(\overline{U_{n}(K)})\;|\; 
\|f\|_{K,n}< \infty \}
\]
where
\[
\|f\|_{K,n}\coloneq\sup_{z \in \overline{U_{n}(K)}}|f(z)|\e^{(1/n)|\re(z)|}
\]
and the spectral maps for $n,k\in\N$, $n\leq k$, be given by the restrictions
\[
\pi_{n,k}\colon \mathcal{O}_{n}(\overline{U_{n}(K)})\to\mathcal{O}_{k}(\overline{U_{k}(K)}), \;
\pi_{n,k}(f)\coloneq f_{\mid U_{k}(K)}.
\]
Then the space of rapidly decreasing holomorphic germs near $K\neq\varnothing$ given by the inductive limit 
\[
 \mathcal{P}_{\ast}(K)\coloneq\lim_{\substack{\longrightarrow\\n\in \N}}\mathcal{O}_{n}(\overline{U_{n}(K)})
\]
exists and is a DFS-space. 
If $K=\varnothing$, we set $\mathcal{P}_{\ast}(\varnothing)\coloneq 0$.
\end{prop}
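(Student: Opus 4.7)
The plan is to show that $\mathcal{P}_{\ast}(K)$ is a countable inductive limit of Banach spaces with compact (injective) linking maps, which by definition (or, e.g., \cite{meisevogt1997}) is a DFS-space. So I would organise the proof into four steps.

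\textbf{Step 1: Each $\mathcal{O}_{n}(\overline{U_{n}(K)})$ is a Banach space.} The norm $\|\cdot\|_{K,n}$ is the weighted sup norm on the closed set $\overline{U_{n}(K)}$ with continuous weight $e^{(1/n)|\re(\cdot)|}$, which makes the space of weighted bounded continuous functions complete. The subspace of those holomorphic on the interior is norm-closed, since if $f_{j}\to f$ in the weighted norm then $f_{j}\to f$ uniformly on compact subsets of $U_{n}(K)$, and Weierstrass' theorem (in the locally complete setting, here just $\C$) preserves holomorphy.

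\textbf{Step 2: Well-definedness, continuity and injectivity of $\pi_{n,k}$ for $n\leq k$.} From the definitions of $U_{t}(K)$ one reads off $\overline{U_{k}(K)}\subset U_{n}(K)$ whenever $n\leq k$ (since $1/k\leq 1/n$ shrinks the disk-part and $k\geq n$ shoves the strip-part further toward $\pm\infty$). Hence restriction is well-defined, and using $e^{(1/k)|\re(z)|}\leq e^{(1/n)|\re(z)|}$ on $\overline{U_{k}(K)}\subset\overline{U_{n}(K)}$,
\[
\|\pi_{n,k}(f)\|_{K,k}\leq \|f\|_{K,n}.
\]
For injectivity, every connected component of $U_{n}(K)$ meets $K$ and therefore also meets $U_{k}(K)$, so by the identity theorem any $f\in\mathcal{O}_{n}(\overline{U_{n}(K)})$ vanishing on $U_{k}(K)$ is identically zero.

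\textbf{Step 3: Compactness of $\pi_{n,k}$ for $n<k$.} Given a sequence $(f_{j})$ with $\|f_{j}\|_{K,n}\leq 1$, we have $|f_{j}(z)|\leq e^{-(1/n)|\re(z)|}$, so the sequence is uniformly bounded on every compact subset of the open set $U_{n}(K)$. By Montel's theorem and a diagonal argument along an exhaustion of $U_{n}(K)$ by compact sets, we extract a subsequence $f_{j_{\ell}}$ converging locally uniformly on $U_{n}(K)$ to some $f\in\mathcal{O}(U_{n}(K))$. To upgrade this to convergence in $\|\cdot\|_{K,k}$, we exploit the gap between the weights: for any $R>0$ and $|\re(z)|>R$,
\[
|f_{j_{\ell}}(z)-f_{j_{\ell'}}(z)|\,e^{(1/k)|\re(z)|}\leq 2e^{-(1/n-1/k)R},
\]
which is uniformly small for large $R$. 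On the bounded slice $\overline{U_{k}(K)}\cap\{|\re(z)|\leq R\}$, which is a compact subset of $U_{n}(K)$, the weight is bounded above by $e^{R/k}$ and Montel convergence is uniform, so $(f_{j_{\ell}})$ is Cauchy in $\|\cdot\|_{K,k}$ by the standard $\varepsilon/2$-split.

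\textbf{Step 4: Conclusion.} Steps 1--3 exhibit $\mathcal{P}_{\ast}(K)=\varinjlim_{n}\mathcal{O}_{n}(\overline{U_{n}(K)})$ as a countable inductive limit of Banach spaces with compact injective spectral maps, which is precisely the definition of a DFS-space; in particular the limit is Hausdorff, complete and regular.

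The main obstacle is Step 3: since $\pm\infty\in K$ is allowed, $\overline{U_{k}(K)}$ contains the unbounded strip around $\pm\infty$ and is therefore not compact, so classical Montel alone does not yield norm-convergence. The key is that the strict inequality $1/k<1/n$ between the two weights forces exponential decay of the tail uniformly in $j$, which is what reduces the problem to Montel on compact truncations of $\overline{U_{k}(K)}$.
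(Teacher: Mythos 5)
Your proof is correct, and it is the standard argument; note that the paper itself does not prove this proposition but only cites it (as a special case of \cite[3.3 Proposition a)]{kruse2019_2}, with earlier versions in Kawai, Junker and the author's thesis), so there is no in-paper proof to compare against — the Montel-plus-weight-gap argument you give for compactness of the linking maps is exactly what one expects those references to contain. One small imprecision: the inclusion $\overline{U_{k}(K)}\subset U_{n}(K)$ fails for $n=k$ (an open set is not contained in its own interior once you take closures); for the well-definedness and the norm estimate of $\pi_{n,k}$ you only need $\overline{U_{k}(K)}\subset\overline{U_{n}(K)}$ and $U_{k}(K)\subset U_{n}(K)$, which do hold for $n\leq k$, while the strict inclusion $\overline{U_{k}(K)}\subset U_{n}(K)$ is true precisely when $n<k$, which is where you actually use it (compactness of $\overline{U_{k}(K)}\cap\{|\re(z)|\leq R\}$ inside $U_{n}(K)$). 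Likewise, in the injectivity step a component of $U_{n}(K)$ need not meet $K\cap\C$ (e.g.\ the strip component when $\pm\infty\in K$), but every component contains either a disc $\D_{1/k}(x)$ with $x\in K\cap\C$ or a tail of the strip of $U_{k}(K)$, so it always meets $U_{k}(K)$ in a nonempty open set and the identity theorem applies as you say. Neither point affects the validity of the argument.
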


The preceding proposition is a special case of \cite[Proposition 4 (a), p.\ 86]{kruse2019_2}. 
It is already mentioned in \cite[p.\ 469]{Kawai} resp.\ proved in 
\cite[1.11 Satz, p.\ 11]{J} and \cite[3.5 Theorem, p.\ 17]{ich} 
that $\mathcal{P}_{\ast}(K)$ is a DFS-space. In the literature the symbol  
$\utilde{\mathcal{O}}(K)$ is also used for $\mathcal{P}_{\ast}(K)$ and the symbol $\mathcal{P}_{\ast}$ 
for the special case $\mathcal{P}_{\ast}(\overline{\R})$ (see \cite[Definition 1.1.3, p.\ 468--469]{Kawai}).
If $\varnothing\neq K\subset\R$ is compact, then $\mathcal{P}_{\ast}(K)=\mathscr{A}(K)$.
The counterpart of the Silva--K\"othe--Grothendieck isomorphism \eqref{eq:koethe_iso} 
for vector-valued slowly increasing holomorphic functions 
outside a non-empty compact set $K\subset\overline{\R}$ reads as follows.

\begin{thm}[{\cite[Corollary 15, p.\ 102, Eq.\ (5), p.\ 93, Eq.\ (13), p.\ 101]{kruse2019_2}}]\label{thm:duality}
Let $K\subset\overline{\R}$ be a non-empty compact set and $E$ a sequentially complete $\C$-lcHs. 
Then the map 
\[
 H_{K}\colon \mathcal{O}^{exp}(\overline{\C}\setminus K,E)/\mathcal{O}^{exp}(\overline{\C},E)
 \to L_{b}(\mathcal{P}_{\ast}(K),E)
\]
given by 
\[
 H_{K}(f)(\varphi)\coloneq\int_{\gamma_{K,n,r}}F(\zeta)\varphi(\zeta)\d\zeta
\]
for $f=[F]\in \mathcal{O}^{exp}(\overline{\C}\setminus K,E)/\mathcal{O}^{exp}(\overline{\C},E)$ and 
$\varphi\in \mathcal{O}_{n}(\overline{U_{n}(K)})$, $n\in\N$, where the integral is a Pettis-integral and 
$\gamma_{K,n,r}$ a suitable path along $K$ in $\overline{U_{n}(K)}$, 
is a topological isomorphism. 
\begin{center}
\includegraphics[scale=0.85]{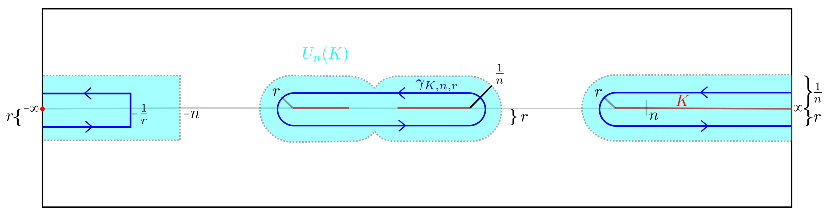}
\captionsetup{type=figure}
\caption{Path $\gamma_{K,n,r}$ for $\pm\infty\in K$ (cf.\ Fig.\ 3 \cite[p.\ 89]{kruse2019_2})}\label{fig:path_gamma}
\end{center}
Its inverse 
\[
 H_{K}^{-1}\colon  L_{b}(\mathcal{P}_{\ast}(K),E)
 \to\mathcal{O}^{exp}(\overline{\C}\setminus K,E)/\mathcal{O}^{exp}(\overline{\C},E)
\]
is given by  
\[
 H_{K}^{-1}(T)\coloneq\bigl[\C\setminus K \ni z\longmapsto \frac{1}{2\pi\mathsf{i}}\langle T,\frac{\e^{-(z-\cdot)^2}}{z-\cdot}\rangle\bigr],
 \quad T\in L_{b}(\mathcal{P}_{\ast}(K),E), 
\] 
In addition, for all non-empty compact sets $K_{1}\subset K$ it holds that
\begin{equation}\label{eq:unabh.H}
{H_{K}}_{\mid\mathcal{O}^{exp}(\overline{\C}\setminus K_{1},E)/\mathcal{O}^{exp}(\overline{\C},E)}=H_{K_{1}}
\end{equation}
on $\mathcal{P}_{\ast}(K)$ and
\begin{equation}\label{eq:unabh.H_inv}
H_{K}^{-1}(T)=H_{\overline{\R}}^{-1}(T),\quad T\in L(\mathcal{P}_{\ast}(K), E).
\end{equation}
\end{thm}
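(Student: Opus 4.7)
The plan is to establish the isomorphism in three stages: first, show $H_{K}$ is a well-defined continuous linear map; second, verify that the stated formula for $H_{K}^{-1}$ produces an element of the quotient space and is a two-sided inverse of $H_{K}$; third, derive continuity of $H_{K}^{-1}$ and the compatibility identities. The substantive new ingredient beyond the classical scalar Silva-K\"othe-Grothendieck construction for compact $K\subset\R$ is the Gaussian factor $e^{-(z-\zeta)^{2}}$ in the Cauchy-type kernel, which supplies the decay at $\pm\infty$ required when $\pm\infty\in K$.

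For well-definedness and continuity of $H_{K}$, the $E$-valued continuity of $F$ and the sequential completeness of $E$ ensure that the Pettis integral along the (compact) path $\gamma_{K,n,r}$ exists. Independence from the path parameter $n$ is a Cauchy-deformation argument for $E$-holomorphic functions. The substantive point is independence from the representative: if $F\in\mathcal{O}^{exp}(\overline{\C},E)$, one deforms $\gamma_{K,n,r}$ outward to a large rectangle and observes that the weight $e^{(1/n)|\re\zeta|}$ built into $\|\varphi\|_{K,n}$ exactly matches the weight $e^{-(1/n)|\re\zeta|}$ built into $|F|_{K,n,\alpha}$; by choosing $\varphi$ of class $\mathcal{O}_{m}$ for $m\le n$ so that the weights become strictly compatible, the contribution of the closing rays vanishes. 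The same estimate $p_{\alpha}(H_{K}([F])(\varphi))\le C_{n}|F|_{K,n,\alpha}\|\varphi\|_{K,n}$ yields continuity of $H_{K}([F])$ as a map $\mathcal{P}_{\ast}(K)\to E$ and continuity of $H_{K}$ into $L_{b}(\mathcal{P}_{\ast}(K),E)$.

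For the inverse formula, fix $z\in\C\setminus K$. Since $U_{n}(K)$ has imaginary part bounded by $1/n$, the identity $|e^{-(z-\zeta)^{2}}|=e^{-(\re z-\re\zeta)^{2}+(\im z-\im\zeta)^{2}}$ shows that $\varphi_{z}(\zeta):=e^{-(z-\zeta)^{2}}/(z-\zeta)$ lies in $\mathcal{O}_{n}(\overline{U_{n}(K)})$ once $n$ is large enough that $z\notin\overline{U_{n}(K)}$; in particular the Gaussian dominates any exponential weight $e^{(1/n)|\re\zeta|}$. Thus $\tilde F(z):=(2\pi\mathsf{i})^{-1}\langle T,\varphi_{z}\rangle$ is defined for every $z\in\C\setminus K$. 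Holomorphy of $\tilde F$ with values in $E$ follows by differentiating under the bracket, using that $z\mapsto\varphi_{z}$ is a $\mathcal{P}_{\ast}(K)$-valued holomorphic function and $T$ is continuous linear. Membership of $\tilde F$ in $\mathcal{O}^{exp}(\overline{\C}\setminus K,E)$ is obtained by combining the continuity of $T$ (giving a bound $p_{\alpha}(T\psi)\le C\|\psi\|_{K,n_{0}}$ for some $n_{0}$) with a direct supremum estimate of $\|\varphi_{z}\|_{K,n_{0}}$ over $z\in S_{n}(K)$, the Gaussian absorbing the weight $e^{(1/n_{0})|\re\zeta|}$ on $U_{n_{0}}(K)$ and yielding the required bound $p_{\alpha}(\tilde F(z))e^{-(1/n)|\re z|}<\infty$.

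The identity $H_{K}\circ H_{K}^{-1}=\id$ reduces, after exchanging the Pettis integral with the continuous operator $T$, to the scalar Cauchy-type identity
\[
\frac{1}{2\pi\mathsf{i}}\int_{\gamma_{K,n,r}}\frac{e^{-(\zeta-w)^{2}}}{\zeta-w}\varphi(\zeta)\,\d\zeta=\varphi(w),\quad w\in K,\;\varphi\in\mathcal{O}_{n}(\overline{U_{n}(K)}),
\]
which is the residue theorem applied on the region bounded by $\gamma_{K,n,r}$, using that $e^{-(\zeta-w)^{2}}$ equals $1$ at $\zeta=w$. For $H_{K}^{-1}\circ H_{K}=\id$, one splits $\gamma_{K,n,r}$ into its upper and lower parts and uses contour deformation to infinity, again sustained by the Gaussian weight, to recover $F$ modulo $\mathcal{O}^{exp}(\overline{\C},E)$. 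Continuity of $H_{K}^{-1}$ in the bounded topology then follows directly from the estimates above, and the compatibility relations \eqref{eq:unabh.H} and \eqref{eq:unabh.H_inv} are immediate: the integral defining $H_{K}$ depends on $\varphi$ only through its values near $K_{1}$, while the explicit kernel in $H_{K}^{-1}$ is independent of the compact set used. The main obstacle is the coupled control of the two opposed exponential weights in the presence of the Gaussian kernel on a non-compact contour; once this interplay is handled uniformly in $n$, the rest of the argument parallels the classical compact case, transferred to the $E$-valued setting via sequential completeness.
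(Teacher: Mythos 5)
The paper does not prove this theorem: it is imported verbatim from \cite[3.15 Corollary (iii)]{kruse2019_2}, so there is no in-paper argument to measure your proposal against. Judged on its own, your outline follows exactly the route one would expect for this weighted Silva--K\"othe--Grothendieck duality (Gaussian-regularized Cauchy transform, contour deformation using the decay of $F\varphi$ along the unbounded arms of $\gamma_{K,n,r}$, residue computation for $H_K\circ H_K^{-1}=\id$, and reinterpretation of the growth estimate for $z\mapsto\varphi_z$ as boundedness of the family $\{e^{-(1/n)|\re z|}\varphi_z\;|\;z\in S_n(K)\}$ in $\mathcal{P}_{\ast}(K)$ to get continuity of $H_K^{-1}$ into $L_b$). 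The structure is sound and all the essential mechanisms are present.

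Three points need tightening. First, your index bookkeeping in the well-definedness step is reversed: a given $\varphi\in\mathcal{O}_n(\overline{U_n(K)})$ cannot be ``chosen of class $\mathcal{O}_m$ for $m\le n$'' (the spectral maps only go upward), but this is harmless because $F\in\mathcal{O}^{exp}(\overline{\C},E)$ satisfies the growth bound $e^{(1/m)|\re\zeta|}$ for \emph{every} $m$, so taking $m=2n$ makes the product decay like $e^{-(1/(2n))|\re\zeta|}$. Second, continuity of $T$ on the DFS-space $\mathcal{P}_{\ast}(K)$ gives $p_\alpha(T\psi)\le C_{n_0,\alpha}\|\psi\|_{K,n_0}$ for \emph{every} step $n_0$ (with $n_0$-dependent constant), not ``for some $n_0$''; and the choice of $n_0$ relative to $n$ is not free --- one needs $n_0$ large enough (roughly $n_0\ge 2n$) both so that $\overline{U_{n_0}(K)}$ stays at positive distance from $S_n(K)$ (controlling $|z-\zeta|^{-1}$) and so that the weight ratio $e^{(1/n_0-1/n)|\re z|}$ stays bounded. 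Third, in the verification of $H_K^{-1}\circ H_K=\id$ beware that $|e^{-(z-\zeta)^2}|=e^{-(\re z-\re\zeta)^2+(\im z-\im\zeta)^2}$ \emph{grows} when $\zeta$ moves away from $z$ in the imaginary direction, so the upper and lower branches of $\gamma_{K,n,r}$ cannot be pushed to $\im\zeta=\pm\infty$; one deforms to fixed finite heights, picks up the residue $-F(z)$ at $\zeta=z$, and patches the height-dependent remainders (their differences vanish by Cauchy's theorem) into a single element of $\mathcal{O}^{exp}(\overline{\C},E)$. With these corrections the argument goes through.
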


The topological isomorphism 
$\mathcal{O}^{exp}(\overline{\C}\setminus K,E)/\mathcal{O}^{exp}(\overline{\C},E) 
\cong L_{b}(\mathcal{P}_{\ast}(K),E)$ in \prettyref{thm:duality} is already known for special cases. 
For $E=\C$ it can be found in \cite[Theorem 3.2.1, p.\ 480]{Kawai}, and 
if $E$ is a Fr\'echet space in \cite[3.9 Satz, p.\ 41]{J} but the proof is different. 
For an interval $K=[a,\infty]$, $a\in\R$, and $E=\C$ the duality in \prettyref{thm:duality} was proved in
\cite[Theorem 3.3, p.\ 85--86]{Mori2} and was the starting point to prove \prettyref{thm:duality} for complete $E$ in
\cite[4.1 Theorem, p.\ 41]{ich}. The map $\Theta_{K}$ is also called (weighted) \emph{Cauchy transformation} 
(see \cite[p.\ 84]{Mori2}).

As nuclearity is inherited by quotient spaces, we derive from \prettyref{rem:eps_prod_nuclear} a), 
\prettyref{thm:duality} with $E=\C$ and the reflexivity of $\mathcal{P}_{\ast}(K)$ that 
$\mathcal{P}_{\ast}(K)$ is nuclear for every compact set $K\subset\overline{\R}$ 
(cf.\ \cite[1.11 Satz, p.\ 11]{J}).
By \cite[Theorem 2.2.1, p.\ 474]{Kawai} (see the correction
of its proof in \cite[Remark, p.\ 247--248]{saburi1985}) $\mathcal{P}_{\ast}(\overline{\R})$ is dense in $\mathcal{P}_{\ast}(K)$ 
for a non-empty compact set $K\subset\overline{\R}$. So for different compact sets $K, J \subset\overline{\R}$ 
we may identify elements of $L(\mathcal{P}_{\ast}(K), E)$ and $L(\mathcal{P}_{\ast}(J), E)$ 
by means of their restrictions to $\mathcal{P}_{\ast}(\overline{\R})$. 
Then the following result defining the support of a vector-valued $\mathcal{P}_{\ast}$-functional is valid, 
whose counterpart for compact subsets of $\R$ is given in \cite[Proposition 5.3, p.\ 1121]{D/L}.

\begin{prop}[{support}]\label{prop:traeger}
Let $K\subset\overline{\R}$ be compact and $E$ a sequentially complete $\C$-lcHs.
\begin{enumerate}
\item [a)] If $J\subset\overline{\R}$ is compact and $K\cap J\neq\varnothing$, then
\[
L(\mathcal{P}_{\ast}(K), E)\cap L(\mathcal{P}_{\ast}(J), E)=L(\mathcal{P}_{\ast}(K\cap J), E).
\]
\item [b)]
For any $T\in L(\mathcal{P}_{\ast}(K), E)$ there is a minimal compact set $J\subset K$ such that $T\in L(\mathcal{P}_{\ast}(J), E)$. 
The set $J$ is called the \emph{support} of $T$.
\end{enumerate}
\end{prop}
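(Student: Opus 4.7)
The plan is to use the Silva--K\"othe--Grothendieck duality \prettyref{thm:duality} to translate both statements into claims about representative functions in $\mathcal{O}^{exp}(\overline{\C}\setminus(\cdot),E)$, and to reduce the analytic content to a geometric comparison of the sets $S_{n}(\cdot)$.

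\emph{Part (a).} The inclusion ``$\supseteq$'' is immediate: any $T\in L(\mathcal{P}_{\ast}(K\cap J),E)$ pulls back along the canonical restrictions $\mathcal{P}_{\ast}(K)\to\mathcal{P}_{\ast}(K\cap J)$ and $\mathcal{P}_{\ast}(J)\to\mathcal{P}_{\ast}(K\cap J)$ to elements of $L(\mathcal{P}_{\ast}(K),E)$ and $L(\mathcal{P}_{\ast}(J),E)$ sharing the same further restriction to $\mathcal{P}_{\ast}(\overline{\R})$. For the direction ``$\subseteq$'', suppose $T_{K}$ and $T_{J}$ coincide on $\mathcal{P}_{\ast}(\overline{\R})$, call the common restriction $T$, and choose the explicit representatives $F_{K}:=H_{K}^{-1}(T_{K})\in\mathcal{O}^{exp}(\overline{\C}\setminus K,E)$ and $F_{J}:=H_{J}^{-1}(T_{J})\in\mathcal{O}^{exp}(\overline{\C}\setminus J,E)$ from \prettyref{thm:duality}. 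By \eqref{eq:unabh.H_inv} both equal $H_{\overline{\R}}^{-1}(T)$ on $\C\setminus\overline{\R}$; since every connected component of $\C\setminus(K\cup J)$ meets $\C\setminus\overline{\R}$ (the real line has empty interior in $\C$), the identity theorem forces $F_{K}=F_{J}$ on all of $\C\setminus(K\cup J)$, and they glue to a single holomorphic $F\colon\C\setminus(K\cap J)\to E$.

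The heart of the proof, and what I expect to be the main obstacle, is verifying $F\in\mathcal{O}^{exp}(\overline{\C}\setminus(K\cap J),E)$. The key geometric lemma is that for each $n\geq 2$ there exists $m\geq n$ with $S_{n}(K\cap J)\subset S_{m}(K)\cup S_{m}(J)$; I would prove it by contradiction, exploiting that a sequence $z_{\ell}\in S_{n}(K\cap J)$ with $\d(z_{\ell},K),\d(z_{\ell},J)\to 0$ either remains bounded and clusters at a point of $K\cap J$ (contradicting $\d(z_{\ell},K\cap J)>1/n$), or escapes with $|\re(z_{\ell})|\to\infty$ and $|\im(z_{\ell})|\to 0$, forcing $\pm\infty\in K\cap J$ and producing the same contradiction via the infinity strips in the definition of $U_{n}(K\cap J)$. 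Granted the lemma, on $S_{m}(K)\cap S_{n}(K\cap J)\subset\C\setminus K$ we have $F=F_{K}$, and the weight estimate $e^{-(1/n)|\re(z)|}\leq e^{-(1/m)|\re(z)|}$ (valid for $m\geq n$) yields $|F|_{K\cap J,n,\alpha}\leq|F_{K}|_{K,m,\alpha}+|F_{J}|_{J,m,\alpha}<\infty$. Finally, setting $T_{K\cap J}:=H_{K\cap J}([F])\in L(\mathcal{P}_{\ast}(K\cap J),E)$ and invoking \eqref{eq:unabh.H} with $K_{1}=K\cap J$ inside both $K$ and $J$ shows that $T_{K\cap J}$ pulls back to $T_{K}$ and to $T_{J}$, completing~(a).

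\emph{Part (b).} Let $\mathcal{J}:=\{J\subset K \text{ compact} : T\in L(\mathcal{P}_{\ast}(J),E)\}$, which by (a) is closed under finite intersection; hence $J^{\ast}:=\bigcap_{J\in\mathcal{J}}J\subset K$ is compact. A standard finite-intersection-property argument shows that every open neighbourhood $U\supset J^{\ast}$ contains some $J\in\mathcal{J}$: the closed sets $J\cap(K\setminus U)$ have empty intersection in the compact $K$, so some finite sub-intersection, which still lies in $\mathcal{J}$, is already empty. Applying this with $U=U_{2n}(J^{\ast})$ gives $J_{n}\in\mathcal{J}$ with $J_{n}\subset U_{2n}(J^{\ast})$, whence the triangle inequality yields $\d(z,J_{n})>1/(2n)$ for $z\in S_{n}(J^{\ast})$ and thus $S_{n}(J^{\ast})\subset S_{2n}(J_{n})$ (the infinity strips being treated analogously). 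Gluing the representatives $F_{J}:=H_{J}^{-1}(T)$ over $J\in\mathcal{J}$ by the identity-theorem argument of (a) produces $F\in\mathcal{O}(\C\setminus J^{\ast},E)$, and the identity $F=F_{J_{n}}$ on $\C\setminus J_{n}$ combined with the inclusion above gives $|F|_{J^{\ast},n,\alpha}\leq|F_{J_{n}}|_{J_{n},2n,\alpha}<\infty$. Hence $F\in\mathcal{O}^{exp}(\overline{\C}\setminus J^{\ast},E)$ and $T=H_{J^{\ast}}([F])\in L(\mathcal{P}_{\ast}(J^{\ast}),E)$, with minimality of $J^{\ast}$ immediate from its definition as an intersection.
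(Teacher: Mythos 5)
Your proof is correct and follows essentially the same route as the paper's: both reduce the statement via \prettyref{thm:duality} and \eqref{eq:unabh.H_inv} to the level of holomorphic representatives, where the paper simply asserts the identity $\bigl(\mathcal{O}^{exp}(\overline{\C}\setminus K,E)/\mathcal{O}^{exp}(\overline{\C},E)\bigr)\cap\bigl(\mathcal{O}^{exp}(\overline{\C}\setminus J,E)/\mathcal{O}^{exp}(\overline{\C},E)\bigr)=\mathcal{O}^{exp}(\overline{\C}\setminus(K\cap J),E)/\mathcal{O}^{exp}(\overline{\C},E)$ and, for part b), the existence of a minimal singularity set for each representative. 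Your identity-theorem gluing, the geometric inclusion $S_{n}(K\cap J)\subset S_{m}(K)\cup S_{m}(J)$, and the finite-intersection-property argument are precisely the details the paper leaves implicit, and they are sound (the only cosmetic point being the trivial case $T=0$, where disjoint sets in $\mathcal{J}$ can occur and the support is $\varnothing$).
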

\begin{proof}
\begin{enumerate}
\item [a)]
``$\subset$'': Let $T\in L(\mathcal{P}_{\ast}(K), E)\cap L(\mathcal{P}_{\ast}(J), E)$. Then $H_{K}^{-1}(T)=H_{J}^{-1}(T)$ 
by \eqref{eq:unabh.H_inv} and
\begin{gather*}
 H_{K}^{-1}(T)\in \bigl(\mathcal{O}^{exp}(\overline{\C}\setminus K, E)/\mathcal{O}^{exp}(\overline{\C}, E)\bigr)
 \cap\bigl(\mathcal{O}^{exp}(\overline{\C}\setminus J, E)/\mathcal{O}^{exp}(\overline{\C}, E)\bigr) \\
=\mathcal{O}^{exp}(\overline{\C}\setminus(K\cap J), E)/\mathcal{O}^{exp}(\overline{\C}, E)
\end{gather*}
and $T=(H_{K\cap J}\circ H_{K}^{-1})(T)\in L(\mathcal{P}_{\ast}(K\cap J), E)$ by \prettyref{thm:duality} and \eqref{eq:unabh.H_inv}. 
The other inclusion is obvious.
\item [b)]
This is clear by \prettyref{thm:duality} since for any $f\in\mathcal{O}^{exp}(\overline{\C}\setminus K, E)$ 
there is a minimal compact set $J$ such that $f\in\mathcal{O}^{exp}(\overline{\C}\setminus J, E)$.
\end{enumerate}
\end{proof}

\begin{rem}
Let $E$ be a $\C$-lcHs. \prettyref{prop:traeger} improves \cite[4.3 Proposition, p.\ 50]{ich} 
from complete $E$ to sequentially complete $E$.
If $K,J\subset\R$ are compact sets with $K\cap J\neq\varnothing$, then \prettyref{prop:traeger} is still valid for 
locally complete $E$ since \prettyref{thm:duality} holds in this case as well by 
\cite[Remark 12, p.\ 101]{kruse2019_2}.
\end{rem}

For $K=\overline{\R}$ we look at the duality \prettyref{thm:duality} once again, but from a different point of view. 
Let $f\in\mathcal{O}^{exp}(\overline{\C}\setminus \overline{\R},E)$. 
In the spirit of \cite{L2} and \cite[Chap.\ II, p.\ 77--97]{Schapira} 
we assign the boundary value
\[
  \langle R(f),\varphi\rangle
\coloneq\lim_{t,\;t'\searrow 0}\langle R_{t,t'}(f),\varphi\rangle
\coloneq\lim_{t,\;t'\searrow 0}\int_{\R}(f(x+\mathsf{i}t)-f(x-\mathsf{i}t'))\varphi(x)\d x,
  \quad\varphi\in\mathcal{P}_{\ast}(\overline{\R}),
\]
to this function, if the limit in $E$ of this Pettis-integral exists. Furthermore, we define the upper boundary value by
\[
  \langle R^{+}(f),\varphi\rangle
\coloneq\lim_{t\searrow 0}\langle R^{+}_{t}(f),\varphi\rangle
\coloneq\lim_{t\searrow 0}\int_{\R}f(x+\mathsf{i}t)\varphi(x)\d x,
  \quad\varphi\in\mathcal{P}_{\ast}(\overline{\R}),
\]
and the lower boundary value by
\[
  \langle R^{-}(f),\varphi\rangle
\coloneq\lim_{t\searrow 0}\langle R^{-}_{t}(f),\varphi\rangle
\coloneq\lim_{t\searrow 0}\int_{\R}f(x-\mathsf{i}t)\varphi(x)\d x,
  \quad\varphi\in\mathcal{P}_{\ast}(\overline{\R}),
\]
if the limit in $E$ of these Pettis-integrals exists.

\begin{thm}\label{thm:bv_int}
Let $E$ be a sequentially complete $\C$-lcHs.
\begin{enumerate}
\item [a)] $(R_{t,t'}(f))$, $(R_{t}^{+}(f))$ and $(R_{t}^{-}(f))$ converge 
to $R(f)$, $R^{+}(f)$ and $R^{-}(f)$ in $L_{b}(\mathcal{P}_{\ast}(\overline{\R}),E)$, 
respectively, and
\[
R(f)=R^{+}(f)-R^{-}(f)=-H_{\overline{\R}}([f])
\]
for all $f\in\mathcal{O}^{exp}(\overline{\C}\setminus \overline{\R},E)$.
\item [b)] The map $[f]\mapsto R(f)$ from 
$\mathcal{O}^{exp}(\overline{\C}\setminus \overline{\R},E)/\mathcal{O}^{exp}(\overline{\C},E)$ 
to $L_{b}(\mathcal{P}_{\ast}(\overline{\R}),E)$ is a topological isomorphism.
\end{enumerate}
\end{thm}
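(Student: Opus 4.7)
The plan is to prove (a) by identifying the boundary-value functional $R(f)$ with $-H_{\overline{\R}}([f])$ via contour deformation; part (b) then follows immediately from \prettyref{thm:duality}.

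First, for $f\in\mathcal{O}^{exp}(\overline{\C}\setminus\overline{\R},E)$ and $\varphi\in\mathcal{O}_m(\overline{U_m(\overline{\R})})\subset\mathcal{P}_{\ast}(\overline{\R})$ with $m\geq 2$, the key estimates are $|\varphi(z)|\leq \|\varphi\|_{\overline{\R},m}e^{-|\re z|/m}$ on $\overline{U_m(\overline{\R})}$ and, for any $n>m$, $p_\alpha(f(z))\leq |f|_{\overline{\R},n,\alpha}e^{|\re z|/n}$ on $S_n(\overline{\R})$. For $t\in(0,1/n)$ with $n>\max(1/t,m)$, the line $\im z = t$ sits inside $\overline{U_m(\overline{\R})}\cap S_n(\overline{\R})$, so $p_\alpha(f(x+\mathsf{i}t)\varphi(x))\leq C e^{-(1/m-1/n)|x|}$, and sequential completeness of $E$ gives existence of $R_t^{+}(f)(\varphi)$, $R_t^{-}(f)(\varphi)$ and $R_{t,t'}(f)(\varphi)$ as Pettis integrals.

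Second, I would invoke Cauchy's theorem. After applying a continuous linear functional $e\in E'$ (Pettis integrals commute with such functionals) and reducing to the scalar holomorphic function $(e\circ f)\varphi$, the closed-path integral around the rectangle $[-L,L]\times[t_1,t_2]$, $0<t_1<t_2<1/m$, vanishes, while its vertical sides at $\re z=\pm L$ are dominated by $(t_2-t_1)|f|_{\overline{\R},n,\alpha}\|\varphi\|_{\overline{\R},m}e^{-L(1/m-1/n)}\to 0$ as $L\to\infty$; since $E'$ separates points of $E$, one lifts back to $E$ and obtains that $R_t^{+}(f)(\varphi)$ is independent of $t\in(0,1/m)$, and likewise for $R_t^{-}(f)(\varphi)$. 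The same exponential-gap estimate permits deformation of the whole path $\gamma_{\overline{\R},n,r}$ (whose vertical connectors near $\pm\infty$ vanish when pushed outwards) onto the pair of horizontal lines $\im z=\pm t$. With the orientation of $\gamma_{\overline{\R},n,r}$ being positive around $\overline{\R}$ (right-to-left above, left-to-right below, as dictated by the inverse formula in \prettyref{thm:duality}), this yields
\[
H_{\overline{\R}}([f])(\varphi)=\int_{\gamma_{\overline{\R},n,r}}f(\zeta)\varphi(\zeta)\,d\zeta=-R_t^{+}(f)(\varphi)+R_t^{-}(f)(\varphi),
\]
proving the pointwise existence of the boundary limits and the identity $R(f)=R^{+}(f)-R^{-}(f)=-H_{\overline{\R}}([f])$. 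To upgrade pointwise to $L_b$-convergence, I would use that a bounded set $B\subset\mathcal{P}_{\ast}(\overline{\R})$ is already bounded in some step $\mathcal{O}_m(\overline{U_m(\overline{\R})})$ with $m$ uniform on $B$ (DFS-regularity); then $R_{t,t'}(f)(\varphi)=R(f)(\varphi)$ for every $\varphi\in B$ as soon as $t,t'\in(0,1/m)$, so the convergence is eventually constant, hence uniform on $B$. Part (b) is now immediate: the map $[f]\mapsto R(f)$ equals $-H_{\overline{\R}}$, which \prettyref{thm:duality} identifies as a topological isomorphism onto $L_b(\mathcal{P}_{\ast}(\overline{\R}),E)$.

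The main obstacle I anticipate is the rigorous vector-valued contour analysis, specifically the correct decomposition of $\gamma_{\overline{\R},n,r}$ into upper, lower, and vanishing vertical "end" pieces at $\pm\infty$ with the right orientations. Once one fixes the exponential gap $1/m-1/n>0$ between the growth of $f$ and the decay of $\varphi$, every required estimate reduces to a short majorisation, and the vector-valued Cauchy theorem is systematically reduced to the scalar case by evaluation against $e\in E'$.
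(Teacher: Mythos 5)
Your overall strategy is the right one and most of it matches the paper's proof (Pettis-integrability from sequential completeness via the exponential gap, contour deformation to identify $H_{\overline{\R}}([f])$, and part b) falling out of \prettyref{thm:duality}). But there is a genuine gap at the heart of part a): Cauchy's theorem applied to the holomorphic function $f\varphi$ on the rectangle $[-L,L]\times[t_{1},t_{2}]$ gives the $t$-independence of $\int_{\R}f(x+\mathsf{i}t)\varphi(x+\mathsf{i}t)\,\d x$, i.e.\ of $R^{+}_{t}(f)(\varphi^{+}_{t})$ with $\varphi^{+}_{t}(x):=\varphi(x+\mathsf{i}t)$ --- the test function must be evaluated on the shifted line, because the horizontal edges of the rectangle contribute $\int_{-L}^{L}f(x+\mathsf{i}t_{j})\varphi(x+\mathsf{i}t_{j})\,\d x$. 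It does \emph{not} give $t$-independence of $R^{+}_{t}(f)(\varphi)=\int_{\R}f(x+\mathsf{i}t)\varphi(x)\,\d x$: differentiating in $t$ and integrating by parts yields $\tfrac{\d}{\d t}R^{+}_{t}(f)(\varphi)=-\mathsf{i}\int_{\R}f(x+\mathsf{i}t)\varphi'(x)\,\d x$, which is nonzero in general. So your claim that $R^{+}_{t}(f)(\varphi)$ is constant in $t$ on $(0,1/m)$ is false, and with it both the existence of the boundary limits (which your argument delivers only for the tilted quantities $R^{\pm}_{t}(f)(\varphi^{\pm}_{t})$) and the ``eventually constant, hence uniform on bounded sets'' argument for convergence in $L_{b}$.

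To close the gap you must compare the two quantities, i.e.\ show $p_{\alpha}\bigl(R^{\pm}_{t}(f)(\varphi)-R^{\pm}_{t}(f)(\varphi^{\pm}_{t})\bigr)\to 0$ as $t\searrow 0$. The paper does this via the Cauchy-inequality estimate $\|\varphi-\varphi(\cdot\pm\mathsf{i}t)\|_{\overline{\R},3n}\leq 6ne^{1/(18n^{2})}\|\varphi\|_{\overline{\R},n}\,t$ and --- since the integrand $f(x\pm\mathsf{i}t)(\varphi(x)-\varphi(x\pm\mathsf{i}t))$ sits on the real line, where $f$ carries no useful bound --- first shifts the integration contour to $\im(z)=\pm(t+\tfrac{1}{3n})$ before majorising, obtaining an $O(t)$ bound. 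The convergence in $L_{b}(\mathcal{P}_{\ast}(\overline{\R}),E)$ then follows not from eventual constancy but from the Banach--Steinhaus theorem, using that $\mathcal{P}_{\ast}(\overline{\R})$ is a DFS-space and hence barrelled. With these two ingredients supplied, your contour identification $H_{\overline{\R}}([f])(\varphi)=-R^{+}_{t}(f)(\varphi^{+}_{t})+R^{-}_{t}(f)(\varphi^{-}_{t})$ and part b) go through exactly as in the paper.
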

\begin{proof}
a) Let $f\in\mathcal{O}^{exp}(\overline{\C}\setminus \overline{\R},E)$. We fix $t>0$ and observe that 
$f(\cdot\pm \mathsf{i}t)\varphi$ is Pettis integrable for every $\varphi\in\mathcal{P}_{\ast}(\overline{\R})$ 
on every compact set $K\subset\R$ by \cite[4.7 Lemma, p.\ 369]{kruse2018_1}, i.e.\ there is $e_{K}^{\pm}=e_{K}^{\pm}(\pm t,f,\varphi)\in E$ such that
\[
 \langle e', e^{\pm}_{K} \rangle=\int_{K}\langle e',f(x\pm \mathsf{i}t)\varphi(x)\rangle \d x,\quad e'\in E'.
\]
We set $a_{k,-}^{-}\coloneq e^{-}_{[-k,0]}$ and $a_{k,+}^{-}\coloneq e^{-}_{[0,k]}$ for $k\in\N$. Let $\alpha\in\mathfrak{A}$, $n\in\N$ and 
$\varphi\in\mathcal{O}_{n}(U_{n}(\overline{\R}))$. We choose $m\in\N$ with $m>2\max(n,t)$ and 
observe that for $k,p\in\N$, $k>p$,
\begin{align*}
     p_{\alpha}(a_{k,-}^{-}-a_{p,-}^{-})
&\leq\int_{[-k,-p]}p_{\alpha}(f(x\pm \mathsf{i}t))|\varphi(x)|\d x
 \leq |f|_{\overline{\R},m,\alpha}\|\varphi\|_{\overline{\R},n}\int^{-p}_{-k}\e^{\frac{1}{m}|x-\mathsf{i} t|-\frac{1}{n}|x|}\d x\\
&\leq \e^{\frac{1}{2n}t}|f|_{\overline{\R},m,\alpha}\|\varphi\|_{\overline{\R},n}\int^{-p}_{-k}\e^{-\frac{1}{2n}|x|}\d x.
\end{align*}
We derive that $(a_{k,-}^{-})$ and, analogously, $(a_{k,+}^{-})$ are Cauchy sequences in the sequentially complete space $E$. 
Hence they have limits $a^{-}_{-}$ resp.\ $a^{-}_{+}$ in $E$ and it is easy to check that 
\[
 \langle e', a_{-}^{-}+a_{+}^{-} \rangle=\int_{\R}\langle e',f(x-\mathsf{i}t)\varphi(x)\rangle \d x,\quad e'\in E',
\]
which means that $f(\cdot-\mathsf{i}t)\varphi$ is Pettis-integrable on $\R$ with $R^{-}_{t}(f)(\varphi)=a_{-}^{-}+a_{+}^{-}$. 
In the same way, it follows that $f(\cdot+\mathsf{i}t)\varphi$ is Pettis-integrable on $\R$.
Further, we obtain
\[
     p_{\alpha}(R^{\pm}_{t}(f)(\varphi))
\leq 2\e^{\frac{1}{2n}t}|f|_{\overline{\R},m,\alpha}\|\varphi\|_{\overline{\R},n}\int^{\infty}_{0}\e^{-\frac{1}{2n}x}\d x
= 4n\e^{\frac{1}{2n}t}|f|_{\overline{\R},m,\alpha}\|\varphi\|_{\overline{\R},n}<\infty.
\]
Thus $R^{\pm}_{t}(f)\in L(\mathcal{O}_{n}(U_{n}(\overline{\R})),E)$ for every $n\in\N$, 
implying $R^{\pm}_{t}(f)\in L(\mathcal{P}_{\ast}(\overline{\R}),E)$.

Now, we set $\varphi^{\pm}_{t}(x)\coloneq\varphi(x\pm \mathsf{i}t)$. Then the functions
\begin{equation}\label{eq:bv_int_1}
t\mapsto R^{\pm}_{t}(f)(\varphi^{\pm}_{t})=\int_{\R}f(x\pm \mathsf{i}t)\varphi(x\pm \mathsf{i}t)\d x
\end{equation}
are defined for $\varphi\in\mathcal{O}_{n}(U_{n}(\overline{\R}))$, $n\in\N$, on $(0,\tfrac{1}{n})$ 
and constant by Cauchy's integral theorem (see the proof of \cite[Proposition 8 (c), p.\ 89]{kruse2019_2}). 
Thus the limits $\lim_{t\searrow 0}R^{\pm}_{t}(f)(\varphi^{\pm}_{t})$ exist in $E$ 
for every $\varphi\in\mathcal{P}_{\ast}(\overline{\R})$.

Let $\alpha\in\mathfrak{A}$, $n\in\N$, and $\varphi\in\mathcal{O}_{n}(U_{n}(\overline{\R}))$. 
For $0<t<\tfrac{1}{3n}$ and $z\in\overline{U_{3n}(K)}$ we have
\begin{flalign*}
&\hspace{0.35cm}|\varphi(z)-\varphi(z\pm \mathsf{i}t)|\e^{\frac{1}{3n}|\re(z)|}\\
&=|\int_{[z\pm \mathsf{i}t,z]}{\varphi'(w)\d w}|\e^{\frac{1}{3n}|\re(z)|}
  \leq t\sup_{w\in[z\pm \mathsf{i}t,z]}|\varphi'(w)|\e^{\frac{1}{3n}|\re(z)|}\\
&\leq t\sup_{w\in[z\pm \mathsf{i}t,z]} 6n\max_{|\zeta-w|
 =\frac{1}{6n}}|\varphi(\zeta)|\e^{\frac{1}{3n}|\re(z)|}\\
&\leq 6n\e^{\frac{1}{18n^{2}}}t\sup_{w\in[z\pm \mathsf{i}t,z]}
 \max_{|\zeta-w|=\frac{1}{6n}}|\varphi(\zeta)|\e^{\frac{1}{3n}|\re(\zeta)|}
 \leq 6n\e^{\frac{1}{18n^{2}}}\|\varphi\|_{\overline{\R},n}t
\end{flalign*}
by Cauchy's inequality where $[z\pm \mathsf{i}t,z]$ is
the line segment from $z\pm \mathsf{i}t$ to $z$. Hence we get
\begin{equation}\label{letzter-satz}
\|\varphi-\varphi(\cdot\pm \mathsf{i}t)\|_{\overline{\R},3n}\leq 6n\e^{\frac{1}{18n^{2}}}\|\varphi\|_{\overline{\R},n}t.
\end{equation}
Further, we have for $0<t<\tfrac{1}{3n}$ and $x\in\R$
\[
|\im(x\pm\mathsf{i}\tfrac{1}{3n})|=\tfrac{1}{3n}\quad\text{plus}\quad 
6n>\tfrac{1}{n}>|\im(x\pm t\pm\mathsf{i}\tfrac{1}{3n})|=t+\tfrac{1}{3n}>\tfrac{1}{6n}.
\]
Due to Cauchy's integral theorem we obtain for all $0<t<\tfrac{1}{3n} $
\begin{align*}
  p_{\alpha}(R^{\pm}_{t}(f)(\varphi)-R^{\pm}_{t}(f)(\varphi^{\pm}_{t}))
&=p_{\alpha}\bigl(\int_{\R}f(x\pm \mathsf{i}t)(\varphi(x)-\varphi(x\pm \mathsf{i}t))\d x\bigr)\\
&=p_{\alpha}\bigl(\int_{\R}f(x\pm \mathsf{i}t \pm \mathsf{i}\tfrac{1}{3n})(\varphi(x\pm \mathsf{i}\tfrac{1}{3n})
  -\varphi(x\pm \mathsf{i}t\pm \mathsf{i}\tfrac{1}{3n}))\d x\bigr)\\
&\leq|f|_{\overline{\R},6n,\alpha}\|\varphi-\varphi(\cdot \pm \mathsf{i}t)\|_{\overline{\R},3n}
 \int^{\infty}_{-\infty}\e^{\frac{1}{6n}|x\pm \mathsf{i}t|-\frac{1}{3n}|x|}\d x\\
&\leq 12n\e^{\frac{1}{6n}t}|f|_{\overline{\R},6n,\alpha}\|\varphi-\varphi(\cdot\pm \mathsf{i}t)\|_{\overline{\R},3n}\\ 
&\underset{\mathclap{\eqref{letzter-satz}}}{\leq} (72n^{2}\e^{\frac{1}{18n^{2}}}\|\varphi\|_{\overline{\R},n}
 |f|_{\overline{\R},6n,\alpha})\e^{\frac{1}{6n}t}t\underset{t \searrow 0}{\to}0.
\end{align*}
Since the limits $\lim_{t\searrow 0}R^{\pm}_{t}(f)(\varphi^{\pm}_{t})$ exist in $E$ 
for every $\varphi\in\mathcal{P}_{\ast}(\overline{\R})$, 
we deduce that the limits $\langle R^{\pm}(f),\varphi\rangle=\lim_{t\searrow 0}R^{\pm}_{t}(f)(\varphi)$ exist in $E$, more precisely,
\[
\langle R^{\pm}(f),\varphi\rangle=\lim_{t\searrow 0}R^{\pm}_{t}(f)(\varphi)=\lim_{t\searrow 0}R^{\pm}_{t}(f)(\varphi^{\pm}_{t}).
\]
The space $\mathcal{P}_{\ast}(\overline{\R})$ is a DFS-space by \prettyref{prop:DFS} and hence a Montel space. 
Thus it is barrelled and by the Banach--Steinhaus theorem \cite[10.3.4 Satz, p.\ 53]{F/W/Buch} we obtain 
that $R^{\pm}(f)\in L_{b}(\mathcal{P}_{\ast}(\overline{\R}),E)$ and 
$(R^{\pm}_{t}(f))$ converges to $R^{\pm}(f)$ in $L_{b}(\mathcal{P}_{\ast}(\overline{\R}),E)$ as $t\searrow0$.
Furthermore, we get
\begin{align}\label{eq:bv_int_2}
  \langle R(f),\varphi\rangle 
&=\lim_{t,\;t'\searrow 0}(R^{+}_{t}(f)(\varphi)-R^{-}_{t'}(f)(\varphi))
 =\lim_{t\searrow 0}R^{+}_{t}(f)(\varphi)-\lim_{t\searrow 0}R^{-}_{t}(f)(\varphi)\nonumber\\
&=\langle R^{+}(f),\varphi\rangle-\langle R^{-}(f),\varphi\rangle
 =\lim_{t\searrow 0}(R^{+}_{t}(f)(\varphi^{+}_{t})- R^{-}_{t}(f)(\varphi^{-}_{t}))\nonumber\\
&=\lim_{t\searrow 0}\bigl(\int_{\R}f(x+\mathsf{i}t)\varphi(x+\mathsf{i}t)\d x-\int_{\R}f(x-\mathsf{i}t)
  \varphi(x-\mathsf{i}t)\d x\bigr)
 =-H_{\overline{\R}}([f])(\varphi)
\end{align}
for every $\varphi\in\mathcal{P}_{\ast}(\overline{\R})$ by \prettyref{thm:duality} and \cite[Proposition 8 (c), p.\ 89]{kruse2019_2}. 
In particular, this means that $R_{t,t'}(f)$ converges to $R(f)$ in $L_{b}(\mathcal{P}_{\ast}(\overline{\R}),E)$ as $t,t'\searrow 0$
for every $f\in \mathcal{O}^{exp}(\overline{\C}\setminus \overline{\R},E)$.

b) By the first part the considered map coincides with $-H$ and the statement follows directly by \prettyref{thm:duality}.
\end{proof}

\prettyref{thm:bv_int} improves \cite[4.5 Theorem, p.\ 51]{ich} from complete $E$ to sequentially complete $E$. 
In particular, this theorem contains, at least in one variable, \cite[Theorem 3.2.9, p.\ 483--484]{Kawai} for $E=\C$ and 
\cite[Satz 3.13, p.\ 44]{J} for Fr\'echet spaces $E$, where it is stated that the map
\[
\widetilde{R}\colon\mathcal{O}^{exp}(\overline{\C}\setminus \overline{\R},E)/\mathcal{O}^{exp}(\overline{\C},E)
\to L_{b}(\mathcal{P}_{\ast}(\overline{\R}),E),
\]
defined by
\[ 
\widetilde{R}([f])(\varphi)\coloneq R^{+}_{t}(f)(\varphi^{+}_{t})-R^{-}_{t}(f)(\varphi^{-}_{t})
\]
for $f\in\mathcal{O}^{exp}(\overline{\C}\setminus\overline{\R},E)$ and $\varphi\in\mathcal{P}_{\ast}(\overline{\R})$ 
and fixed $t$ small enough, is a topological isomorphism. 
This result is contained since the functions in \eqref{eq:bv_int_1} are constant and due to \eqref{eq:bv_int_2}. 

Finally, we define the Fourier transformation on $L_{b}(\mathcal{P}_{\ast}(\overline{\R}),E)$. 
By \cite[Proposition 3.2.4, p.\ 483]{Kawai} (cf.\ \cite[Proposition 8.2.2, p.\ 376]{Kan}) 
the Fourier transformation $\mathscr{F}\colon \mathcal{P}_{\ast}(\overline{\R})\to\mathcal{P}_{\ast}(\overline{\R})$ 
defined by
\[
  \mathscr{F}(\varphi)(\zeta)
\coloneq\widehat{\varphi}(\zeta)
\coloneq\int_{\R}\varphi(x)\e^{\mathsf{i}x\zeta}\d x,\quad\varphi\in\mathcal{O}_{n}(U_{n}(\overline{\R})),\,
 \zeta\in U_{k}(\overline{\R}),\,k>n,
\]
is a topological isomorphism. 
The Fourier transformation on $L_{b}(\mathcal{P}_{\ast}(\overline{\R}),E)$ is now defined by transposition 
(see e.g.\ \cite[3.14 Folgerung, p.\ 45]{J}, \cite[Definition 3.2.5, p.\ 483]{Kawai}, 
\cite[4.6 Theorem, p.\ 53]{ich}).

\begin{cor}\label{cor:Fourier-Trafo}
Let $E$ be a $\C$-lcHs. The Fourier transformation 
\begin{align*}
&\mathscr{F}_{\star}\colon L_{b}(\mathcal{P}_{\ast}(\overline{\R}),E)\to L_{b}(\mathcal{P}_{\ast}(\overline{\R}),E),\\ 
&\mathscr{F}_{\star}(T)(\varphi)\coloneq\langle T,\mathscr{F}(\varphi)\rangle,
\quad T\in L_{b}(\mathcal{P}_{\ast}(\overline{\R}),E),\;\varphi\in\mathcal{P}_{\ast}(\overline{\R}),
\end{align*}
is a topological isomorphism with inverse given by $\mathscr{F}_{\star}^{-1}(T)(\varphi)\coloneq\langle T,\mathscr{F}^{-1}(\varphi)\rangle$, 
for $T\in L_{b}(\mathcal{P}_{\ast}(\overline{\R}),E)$, $\varphi\in\mathcal{P}_{\ast}(\overline{\R})$.
\end{cor}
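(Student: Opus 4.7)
The plan is to derive everything from the already-cited fact that the scalar Fourier transformation $\mathscr{F}\colon\mathcal{P}_{\ast}(\overline{\R})\to\mathcal{P}_{\ast}(\overline{\R})$ is a topological isomorphism, by observing that $\mathscr{F}_{\star}$ is nothing but right-composition with $\mathscr{F}$, i.e.\ $\mathscr{F}_{\star}(T)=T\circ\mathscr{F}$ for every $T\in L_{b}(\mathcal{P}_{\ast}(\overline{\R}),E)$. Since $\mathscr{F}$ is continuous and linear on $\mathcal{P}_{\ast}(\overline{\R})$, the composition $T\circ\mathscr{F}$ is continuous and linear from $\mathcal{P}_{\ast}(\overline{\R})$ to $E$, so $\mathscr{F}_{\star}(T)$ genuinely belongs to $L(\mathcal{P}_{\ast}(\overline{\R}),E)$, and linearity of the assignment $T\mapsto\mathscr{F}_{\star}(T)$ is immediate from the definition.

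Next I would verify continuity with respect to the topology of uniform convergence on bounded subsets. Because $\mathscr{F}$ is a topological isomorphism, it maps any bounded subset $B\subset\mathcal{P}_{\ast}(\overline{\R})$ to a bounded subset $\mathscr{F}(B)$, and conversely every bounded set arises this way. Thus for every continuous seminorm $p_{\alpha}$ on $E$ and every bounded $B\subset\mathcal{P}_{\ast}(\overline{\R})$ we have
\[
\sup_{\varphi\in B}p_{\alpha}(\mathscr{F}_{\star}(T)(\varphi))
=\sup_{\psi\in\mathscr{F}(B)}p_{\alpha}(T(\psi)),
\]
which is a continuous seminorm of $T$ in $L_{b}(\mathcal{P}_{\ast}(\overline{\R}),E)$ since $\mathscr{F}(B)$ is bounded. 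This shows $\mathscr{F}_{\star}$ is continuous.

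For the inverse, define $G(T)(\varphi):=\langle T,\mathscr{F}^{-1}(\varphi)\rangle=T\circ\mathscr{F}^{-1}$. By exactly the same reasoning (using that $\mathscr{F}^{-1}$ is continuous and sends bounded sets to bounded sets), $G$ is a well-defined continuous linear map $L_{b}(\mathcal{P}_{\ast}(\overline{\R}),E)\to L_{b}(\mathcal{P}_{\ast}(\overline{\R}),E)$. A direct check gives
\[
(\mathscr{F}_{\star}\circ G)(T)(\varphi)=G(T)(\mathscr{F}(\varphi))=T(\mathscr{F}^{-1}(\mathscr{F}(\varphi)))=T(\varphi),
\]
and similarly $G\circ\mathscr{F}_{\star}=\id$, so $G=\mathscr{F}_{\star}^{-1}$.

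There is really no hard step here; the whole statement is a formal transposition. The only thing one must pay attention to is the topology: the argument works precisely because $\mathscr{F}$ (and hence $\mathscr{F}^{-1}$) is a \emph{topological} isomorphism, so bounded sets in the domain and codomain of $\mathcal{P}_{\ast}(\overline{\R})$ match up and the locally convex topology of $L_{b}$ is preserved under composition with $\mathscr{F}$.
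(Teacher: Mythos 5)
Your proposal is correct and is exactly the argument the paper intends: the paper simply remarks that the corollary ``follows directly from the fact that $\mathscr{F}$ is a topological isomorphism,'' and your write-up is the honest unpacking of that remark (transposition $T\mapsto T\circ\mathscr{F}$, with continuity on $L_{b}$ coming from $\mathscr{F}$ and $\mathscr{F}^{-1}$ mapping bounded sets to bounded sets). No gaps.
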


This follows directly from the fact that $\mathscr{F}$ is a topological isomorphism.

\section{Strict admissibility}
\label{sect:strictly_admissible}

In this section we recall some results on the notion of strict admissibility from the introduction.

\begin{defn}[{(strictly) admissible, \cite[p.\ 55]{ich}}]
Let $E$ be a $\C$-lcHs. We call $E$ \emph{admissible} if the Cauchy--Riemann operator
\[
\overline{\partial}\coloneq\tfrac{1}{2}(\partial^{e_{1}}+\mathsf{i}\partial^{e_{2}})
\colon \mathcal{E}^{exp}(\overline{\C}\setminus K,E)\to \mathcal{E}^{exp}(\overline{\C}\setminus K,E)
\]
is surjective for every compact set $K\subset\overline{\R}$.
We call $E$ \emph{strictly admissible} if $E$ is admissible and 
\[
\overline{\partial}\colon \mathcal{C}^{\infty}(U,E)\to\mathcal{C}^{\infty}(U,E)
\]
is surjective for every open set $U\subset\C$.
\end{defn}

Using that $E=\C$ is admissible (see e.g.\ \cite[5.16 Theorem, p.\ 80]{ich} or \cite[Corollary 5.6, Example 5.7 a), p.\ 2702--2703]{kruse2018_5}), 
it follows from Grothendieck's classical theory of tensor products \cite{Gro} and 
\prettyref{rem:eps_prod_nuclear} that Fr\'echet spaces $E$ are 
admissible, from Vogt's splitting theory for Fr\'echet spaces that $E\coloneq F_{b}'$, where $F$ is a Fr\'{e}chet space 
satisfying the condition $(DN)$, is admissible by \cite[Theorem 2.6, p.\ 174]{vogt1983}, and from 
Bonet's and Doma\'nski's splitting theory for PLS-spaces that an ultrabornological PLS-space $E$ having the property $(PA)$ 
is admissible by \cite[Corollary 3.9, p.\ 1112]{D/L} since 
$\mathcal{O}^{exp}(\overline{\C}\setminus K)=\operatorname{ker}\overline{\partial}$ 
in $\mathcal{E}^{exp}(\overline{\C}\setminus K)$ has the property $(\Omega)$ 
(see \cite[Chap.\ 29, Definition, p.\ 367]{meisevogt1997}) by 
\cite[5.20 Theorem, p.\ 85]{ich} or \cite[Corollary 18, p.\ 21]{kruse2019_1} if $K=\varnothing$ and 
\cite[5.22 Theorem, p.\ 92]{ich} or \cite[Corollary 19 (ii), p.\ 109]{kruse2019_2} if $K\neq\varnothing$. 

We recall that a Fr\'echet space $(F,(\vertiii{\cdot}_{k})_{k\in\N})$ satisfies $(DN)$
by \cite[Chap.\ 29, Definition, p.\ 359]{meisevogt1997} if
\[
\exists\;p\in\N\;\forall\;k\in\N\;\exists\;n\in\N,\,C>0\;\forall\;x\in F:\;
\vertiii{x}^{2}_{k}\leq C\vertiii{x}_{p}\vertiii{x}_{n}.
\]
A \emph{PLS-space} is a projective limit $X=\lim\limits_{\substack{\longleftarrow\\N\in\N}}X_{N}$, where the
inductive limits $X_{N}=\lim\limits_{\substack{\longrightarrow\\n\in \N}}(X_{N,n},\vertiii{\cdot}_{N,n})$ 
are DFS-spaces, and it satisfies $(PA)$ if
\begin{gather*}
\forall\;N\;\exists\;M\;\forall\;K\;\exists\;n\;\forall\;m\;\forall\;\eta >0\;\exists\;k,C,r_0 >0\;\forall\;r>r_0\; 
\forall\; x'\in X'_{N}:\\
     \vertiii{x'\circ i^{M}_{N}}^{\ast}_{M,m}
\leq C\bigl(r^{\eta}\vertiii{x'\circ i^{K}_{N}}^{\ast}_{K,k}+\frac{1}{r}\vertiii{x'}^{\ast}_{N,n}\bigr)
\end{gather*}
where $\vertiii{\cdot}^{\ast}$ denotes the dual norm of $\vertiii{\cdot}$ (see \cite[Section 4, Eq.\ (24), p.\ 577]{Dom1}).
By \cite[Remark 2, p.\ 6]{kruse2019_1} a Fr\'echet-Schwartz space $F$ satisfies $(DN)$ if and
only if the DFS-space $E\coloneq F_{b}'$ satisfies $(PA)$.

\begin{thm}[{\cite[Example 5.7 a), p.\ 2703]{kruse2018_5}, \cite[Theorem 20 (ii), p.\ 110]{kruse2019_2}, 
\cite[Corollary 18, p.\ 21]{kruse2019_1}}]\label{thm:surj_CR}
Let $K\subset\overline{\R}$ be a compact set. If
\begin{enumerate}
\item [a)] $E$ is a Fr\'echet space over $\C$, or if
\item [b)] $E\coloneq F_{b}'$ where $F$ is a Fr\'echet space over $\C$ satisfying $(DN)$, or if
\item [c)] $E$ is an ultrabornological PLS-space over $\C$ satisfying $(PA)$, 
\end{enumerate}
then $E$ is admissible, i.e.\
\[
\overline{\partial}\colon \mathcal{E}^{exp}(\overline{\C}\setminus K,E)\to\mathcal{E}^{exp}(\overline{\C}\setminus K,E)
\]
is surjective for every compact set $K\subset\overline{\R}$.
\end{thm}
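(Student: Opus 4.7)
The plan is to reduce each of the three cases to the scalar case $E=\C$ and then lift surjectivity to the vector-valued situation via suitable splitting results, exploiting the special structure of $\mathcal{E}^{exp}(\overline{\C}\setminus K)$ and its subspace $\mathcal{O}^{exp}(\overline{\C}\setminus K)=\ker\overline{\partial}$. The starting point is the known scalar surjectivity of $\overline{\partial}$ on $\mathcal{E}^{exp}(\overline{\C}\setminus K)$, which produces a short exact sequence
\[
0\longrightarrow\mathcal{O}^{exp}(\overline{\C}\setminus K)\longrightarrow\mathcal{E}^{exp}(\overline{\C}\setminus K)\xrightarrow{\;\overline{\partial}\;}\mathcal{E}^{exp}(\overline{\C}\setminus K)\longrightarrow 0
\]
of nuclear Fr\'echet spaces, the nuclearity and Fr\'echet property coming from \prettyref{rem:eps_prod_nuclear}\,a).

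For case (a), I would tensor this exact sequence with the Fr\'echet space $E$. Because $\mathcal{E}^{exp}(\overline{\C}\setminus K)$ is nuclear and complete, the isomorphism $\mathcal{E}^{exp}(\overline{\C}\setminus K,E)\cong\mathcal{E}^{exp}(\overline{\C}\setminus K)\widehat{\otimes}_{\pi}E$ from \prettyref{rem:eps_prod_nuclear}\,b) identifies $\overline{\partial}$ on the vector-valued space with $\overline{\partial}\otimes\id_{E}$. By Grothendieck's classical theory of tensor products, completed projective tensor products with $E$ preserve the exactness of sequences of Fr\'echet spaces (using that $\mathcal{O}^{exp}(\overline{\C}\setminus K)$ is a Fr\'echet subspace and the quotient is Fr\'echet), which gives the desired surjectivity.

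For cases (b) and (c), the passage from scalar to vector-valued surjectivity is no longer a tensoring argument but a splitting/$\mathrm{Ext}^{1}$-vanishing argument, and this is where the main difficulty lies. The key input I would use is that the kernel $\mathcal{O}^{exp}(\overline{\C}\setminus K)$ satisfies property $(\Omega)$; this is exactly the content of \cite[5.20 Theorem, p.\ 85]{ich}/\cite[4.5 Corollary, p.\ 13]{kruse2019_1} for $K=\varnothing$ and \cite[5.22 Theorem, p.\ 92]{ich}/\cite[4.9 Corollary (ii), p.\ 29]{kruse2019_2} for $K\neq\varnothing$. Granted $(\Omega)$, Vogt's splitting theorem \cite[Theorem 2.6, p.\ 174]{vogt1983} yields that every continuous linear map from $E=F_{b}'$ with $F$ Fr\'echet satisfying $(DN)$ into the cokernel lifts, so $\overline{\partial}\,\widehat{\varepsilon}\,\id_{E}$ is surjective; using \prettyref{rem:eps_prod_nuclear}\,b) once more identifies this with $\overline{\partial}$ on $\mathcal{E}^{exp}(\overline{\C}\setminus K,E)$. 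Analogously for case (c), I would invoke the Bonet--Doma\'nski splitting result \cite[Corollary 3.9, p.\ 1112]{D/L}, whose hypothesis is that the kernel has $(\Omega)$ and $E$ is an ultrabornological PLS-space with $(PA)$, to deduce surjectivity of $\overline{\partial}\,\widehat{\varepsilon}\,\id_{E}$ and hence of $\overline{\partial}$ on $\mathcal{E}^{exp}(\overline{\C}\setminus K,E)$.

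The principal obstacle is not the surjectivity of $\overline{\partial}$ in the scalar case (which is cited) but rather verifying that the structural hypotheses are in place to apply the splitting theorems: namely that $\mathcal{E}^{exp}(\overline{\C}\setminus K)$ is a nuclear Fr\'echet space and, crucially, that $\mathcal{O}^{exp}(\overline{\C}\setminus K)$ has property $(\Omega)$. Once these are guaranteed by the cited results, the proof reduces to invoking the appropriate splitting/tensor-product machinery case by case; no further computation is needed beyond identifying $\overline{\partial}$ on the vector-valued space with the lift of the scalar operator through the $\varepsilon$-product representation of \prettyref{rem:eps_prod_nuclear}\,b).
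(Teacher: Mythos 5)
Your proposal matches the paper's own argument: the paper likewise starts from scalar admissibility, uses the nuclear Fréchet structure of $\mathcal{E}^{exp}(\overline{\C}\setminus K)$ and the $\varepsilon$-/tensor-product representation of \prettyref{rem:eps_prod_nuclear} for the Fréchet case, and for cases b) and c) invokes Vogt's splitting theorem resp.\ the Bonet--Doma\'nski PLS-splitting result, with the property $(\Omega)$ of the kernel $\mathcal{O}^{exp}(\overline{\C}\setminus K)$ as the crucial structural input, citing the same sources. No gaps; this is essentially the same proof.
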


This result can also be found in \cite[5.17 Theorem, p.\ 82]{ich} and \cite[5.24 Theorem, p.\ 85]{ich}. 
In the non-weighted case the Cauchy--Riemann operator
\begin{equation}\label{eq:CR_unweighted}
\overline{\partial}\colon\mathcal{C}^{\infty}(U,E)\to\mathcal{C}^{\infty}(U,E)
\end{equation}
is surjective for every open set $U\subset\C$ if $E=\C$ by \cite[Theorem 1.4.4, p.\ 12]{H3}. 
Furthermore, $\mathcal{O}(U)$ and $\mathcal{C}^{\infty}(U)$, 
both equipped with the topology of uniform convergence on compact subsets 
(of partial derivatives of any order in the latter case), 
are nuclear Fr\'echet spaces by \cite[Examples 5.18 (3), (4), p.\ 42]{meisevogt1997}, 
\cite[Examples 28.9 (1), (4), p.\ 349--350]{meisevogt1997} 
and we have the topological isomorphisms
\[
\mathcal{C}^{\infty}(U,E)\cong\mathcal{C}^{\infty}(U)\varepsilon E\cong\mathcal{C}^{\infty}(U)\widehat{\otimes}_{\pi}E
\]
plus
\[
\mathcal{O}(U,E)\cong \mathcal{O}(U)\varepsilon E\cong\mathcal{O}(U)\widehat{\otimes}_{\pi}E
\]
by \cite[Theorem 44.1, p.\ 449]{Treves} resp.\ \cite[16.7.5 Corollary, p.\ 366]{Jarchow} 
for open $U\subset\C$ and complete $E$. 
By \cite[Theorem 10.10, p.\ 240]{Kaballo} the $\overline{\partial}$-operator in \eqref{eq:CR_unweighted} is surjective 
if $E$ is a Fr\'echet space. If $E\coloneq F_{b}'$ where $F$ is a Fr\'echet space satisfying $(DN)$ 
or $E$ is an ultrabornological PLS-space satisfying $(PA)$, this holds due to \cite[2.6 Theorem, p.\ 174]{vogt1983} 
resp.\ \cite[Corollary 3.9, p.\ 1112]{D/L} as well because
$\mathcal{O}(U)=\operatorname{ker}\overline{\partial}$ in $\mathcal{C}^{\infty}(U)$ has the property $(\Omega)$
by \cite[Proposition 2.5 (b), p.\ 173]{vogt1983} for every open $U\subset\C$. 
In combination with \prettyref{thm:surj_CR} this means:

\begin{thm}[{\cite[5.25 Theorem, p.\ 98]{ich}}]\label{thm:examples_strictly_admiss}
If
\begin{enumerate}
\item [a)] $E$ is a Fr\'echet space over $\C$, or if
\item [b)] $E\coloneq F_{b}'$ where $F$ is a Fr\'echet space over $\C$ satisfying $(DN)$, or if
\item [c)] $E$ is an ultrabornological PLS-space over $\C$ satisfying $(PA)$, 
\end{enumerate}
then $E$ is strictly admissible.
\end{thm}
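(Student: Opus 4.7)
The plan is to observe that the statement essentially splits into two independent surjectivity problems, and each of the three hypotheses has been tailored so that both are already available in the literature. First, admissibility of $E$ in all three cases is exactly the content of \prettyref{thm:surj_CR} (stated just above), so that half is free. What remains is to verify the second half of strict admissibility, namely the surjectivity of
\[
\overline{\partial}\colon \mathcal{C}^{\infty}(U,E)\to\mathcal{C}^{\infty}(U,E)
\]
for every open $U\subset\C$.

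For the scalar case $E=\C$ this is H\"ormander's classical result \cite[Theorem 1.4.4]{H3}. To transfer this to vector-valued $E$, I would first recall that $\mathcal{C}^{\infty}(U)$ and $\mathcal{O}(U)$ are nuclear Fr\'echet spaces, hence the topological identifications
\[
\mathcal{C}^{\infty}(U,E)\cong\mathcal{C}^{\infty}(U)\varepsilon E
\quad\text{and}\quad
\mathcal{O}(U,E)\cong\mathcal{O}(U)\varepsilon E
\]
hold for any quasi-/sequentially complete $E$, and both $\varepsilon$-products coincide with the corresponding completed $\pi$-tensor products when $E$ is complete. For case (a), i.e.\ $E$ Fr\'echet, I would invoke Grothendieck's tensor product theory (or, more directly, Kaballo's \cite[Theorem 10.10]{Kaballo}) to deduce that surjectivity of $\overline{\partial}$ on $\mathcal{C}^{\infty}(U)$ lifts to surjectivity on $\mathcal{C}^{\infty}(U,E)$.

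For cases (b) and (c) the strategy is to apply the available splitting theorems to the short exact sequence
\[
0\longrightarrow \mathcal{O}(U)\longrightarrow \mathcal{C}^{\infty}(U)\xrightarrow{\overline{\partial}} \mathcal{C}^{\infty}(U)\longrightarrow 0,
\]
which is exact in the scalar case by H\"ormander. The crucial input is that the kernel $\mathcal{O}(U)$ satisfies property $(\Omega)$ for every open $U\subset\C$, a classical fact due to Vogt \cite[Proposition 2.5(b)]{vogt1983}. Combined with this $(\Omega)$-property, Vogt's splitting theorem \cite[Theorem 2.6]{vogt1983} handles case (b) (where $E=F_b'$ with $F$ Fr\'echet satisfying $(DN)$), and the Bonet--Doma\'nski splitting theorem for PLS-spaces \cite[Corollary 3.9]{D/L} handles case (c), in each instance yielding the required surjectivity after tensoring with $E$.

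There is no real obstacle here beyond correctly assembling these ingredients; the whole point of cases (b) and (c) is that the hypotheses $(DN)$ and $(PA)$ were designed precisely to make the relevant $\operatorname{Proj}^{1}$ or $\operatorname{Ext}^{1}$ group vanish, so the mild ``work'' is just checking that the sequence above is in the form to which the splitting theorems apply. Combining these three cases with \prettyref{thm:surj_CR} then gives strict admissibility in all three situations.
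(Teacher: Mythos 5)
Your proposal is correct and follows essentially the same route as the paper: admissibility is quoted from \prettyref{thm:surj_CR}, and the surjectivity of $\overline{\partial}$ on $\mathcal{C}^{\infty}(U,E)$ is obtained from H\"ormander's scalar result together with the nuclearity of $\mathcal{C}^{\infty}(U)$ and $\mathcal{O}(U)$ and the $\varepsilon$-product representation, using Kaballo's tensor-product argument for case a) and the splitting theorems of Vogt resp.\ Bonet--Doma\'nski for cases b) and c), with the property $(\Omega)$ of $\mathcal{O}(U)$ from Vogt as the key input. No gaps.
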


To close this section we provide some examples of ultrabornological PLS-spaces satisfying $(PA)$ and 
spaces of the form $E\coloneq F_{b}'$ where $F$ is a Fr\'echet space satisfying $(DN)$.
Most of them are already contained in \cite[Corollary 4.8, p.\ 1116]{D/L} (see \cite{Dom1} and \cite{vogt1983} as well).

\begin{exa}[{\cite[Example 3, p.\ 7]{kruse2019_1}}]\label{ex:PLS_PA_DF_DN}
a) The following spaces are ultrabornological PLS-spaces with property $(PA)$ 
and also strong duals of a Fr\'echet space satisfying $(DN)$, hence are strictly admissible:
\begin{itemize}
 \item the strong dual of a power series space of inifinite type $\Lambda_{\infty}(\alpha)_{b}'$,
 \item the strong dual of any space of holomorphic functions $\mathcal{O}(U)_{b}'$ 
 where $U$ is a Stein manifold with the strong Liouville property (for instance, for $U=\C^{d}$),
 \item the space of germs of holomorphic functions $\mathcal{O}(K)$ where $K$ is a completely pluripolar 
 compact subset of a Stein manifold (for instance $K$ consists of one point),
 \item the space of tempered distributions $\mathcal{S}(\R^{d})_{b}'$ and 
 the space of Fourier ultra-hyperfunctions $\mathcal{P}'_{\ast\ast}$ (with the strong dual topology),
 \item the weighted distribution spaces $(K\{pM\})_{b}'$ of Gelfand and Shilov if the weight $M$ satisfies
 \[
  \sup_{|y|\leq 1}M(x+y)\leq C\inf_{|y|\leq 1}M(x+y),\quad x\in\R^{d},
 \]
 \item $\mathcal{D}(K)_{b}'$ for any compact set $K\subset\R^{d}$ with non-empty interior,
 \item $\mathcal{C}^{\infty}(\overline{U})_{b}'$ for any non-empty open bounded set $U\subset\R^{d}$ with $\mathcal{C}^{1}$-boundary.
\end{itemize}
b) The following spaces are ultrabornological PLS-spaces with property $(PA)$:
\begin{itemize}
 \item an arbitrary Fr\'echet-Schwartz space,
 \item a PLS-type power series space $\Lambda_{r,s}(\alpha,\beta)$ whenever $s=\infty$ 
 or $\Lambda_{r,s}(\alpha,\beta)$ is a Fr\'echet space,
 \item the spaces of distributions $\mathcal{D}(U)_{b}'$ and ultradistributions of Beurling type $\mathcal{D}_{(\omega)}(U)_{b}'$ 
 for any open set $U\subset\R^{d}$,
 \item the kernel of any linear partial differential operator with constant coefficients in $\mathcal{D}(U)_{b}'$ 
 or in $\mathcal{D}_{(\omega)}(U)_{b}'$ when $U\subset\R^{d}$ is open and convex,
 \item the space $L_{b}(X,Y)$ where $X$ has $(DN)$, $Y$ has $(\Omega)$ and both are nuclear Fr\'echet spaces. 
 In particular, $L_{b}(\Lambda_{\infty}(\alpha),\Lambda_{\infty}(\beta))$ if both spaces are nuclear.
\end{itemize}
c) The following spaces are strong duals of a Fr\'echet space satisfying $(DN)$, hence are strictly admissible:
\begin{itemize}
 \item the strong dual $F_{b}'$ of any Banach space $F$,
 \item the strong dual $\lambda^{2}(A)_{b}'$ of the K\"othe space $\lambda^{2}(A)$ 
 with a K\"othe matrix $A=(a_{j,k})_{j,k\in\N_{0}}$ satisfying 
 \[
  \exists\;p\in\N_{0}\;\forall\;k\in\N_{0}\;\exists\;n\in\N_{0},C>0:\;a_{j,k}^{2}\leq Ca_{j,p}a_{j,n}.
 \]
\end{itemize}
\end{exa} 

\begin{exa}[{\cite[Corollary 4.9, p.\ 1117]{D/L}}]\label{ex:PLS_non_PA}
a) The following ultrabornological PLS-spaces do not have $(PA)$:
\begin{itemize}
 \item the strong dual of power series space of finite type $\Lambda_{0}(\alpha)_{b}'$,
 \item the space of ultradifferentiable functions of Roumieu type $\mathcal{E}_{\{\omega\}}(U)$ 
 where $\omega$ is a non-quasianalytic weight and $U\subset\R^{d}$ is an arbitrary open set,
 \item the strong dual of any space of holomorphic functions $\mathcal{O}(U)_{b}'$ 
 where $U$ is a Stein manifold which does not have the strong Liouville property 
 (for instance, $U=\D^{d}$ the polydisc, $U=\mathbb{B}_{d}$ the unit ball etc.),
 \item the space of germs of holomorphic functions $\mathcal{O}(K)$ where $K$ is compact 
 and not completely pluripolar (for instance, $K=\overline{\D}^{d}$ or $K=\overline{\mathbb{B}}_{d}$),
 \item the space of distributions $\mathcal{E}'(U)$ and ultradistributions 
 $\mathcal{E}_{(\omega)}'(U)$ (with the strong dual topology) with compact support for $U\subset\R^{d}$ open, 
 \item the space of real analytic functions $\mathscr{A}(U)$ for any open set $U\subset\R^{d}$.
\end{itemize}
b) For the following LFS-spaces $E$ the map \eqref{eq:CR_unweighted} is not surjective and hence $E$ is not strictly admissible:
\begin{itemize}
 \item the space of test functions $\mathcal{D}(U)$ (with its inductive limit topology) where $U\subset\R^{d}$ is any open set,
 \item the space of test functions for ultradistributions $\mathcal{D}_{(\omega)}(U)$, 
 the space of ultradistributions of Roumieu type with compact support $\mathcal{E}_{\{\omega\}}(U)_{b}'$ 
 where $\omega$ is a non-quasianalytic weight, $U\subset\R^{d}$ is any open set,
 \item the strong dual $\mathscr{A}(U)_{b}'$ for any open set $U\subset\R^{d}$.
\end{itemize}
\end{exa}

\section{Duality method}
\label{sect:duality_method}

In this section we present the main results of \cite[Chapter 6]{ich}. 
We construct $E$-valued Fourier hyperfunctions in one variable as the sheaf 
generated by equivalence classes of compactly supported $E$-valued $\mathcal{P}_{\ast}$-functionals and 
show that they form a flabby sheaf under the condition that $E$ is strictly admissible and sequentially complete. 
This construction relies on the Silva--K\"othe--Grothendieck duality \prettyref{thm:duality} 
and the method, which goes back to Martineau \cite{martineau1961}, is sometimes called \emph{duality method} 
(see \cite{D/L} and \cite{Ito1998}). Furthermore, a description of $E$-valued Fourier hyperfunctions 
as boundary values of slowly increasing holomorphic functions is provided 
and finally the necessity of the conditions that are used for the construction of 
vector-valued Fourier hyperfunctions will be examined.

\begin{defn}[{Fourier hyperfunctions}]
For an non-empty open set $\Omega\subset\overline{\R}$ and $\C$-lcHs $E$ 
we define the space of $E$-valued \emph{Fourier hyperfunctions} on $\Omega$ by
\[
\mathcal{R}(\Omega,E)\coloneq L(\mathcal{P}_{\ast}(\overline{\Omega}),E)/L(\mathcal{P}_{\ast}(\partial\Omega),E)
\]
and $\mathcal{R}(\varnothing,E)\coloneq 0$. 
For $T\in L(\mathcal{P}_{\ast}(\overline{\Omega}),E)$ we denote by $[T]$ the corresponding element of $\mathcal{R}(\Omega,E)$. 
If the set $\Omega$ is equipped with an index, then we sometimes do the same with the corresponding equivalence class 
in order to distinguish between different classes. Further, we use the notation $\mathcal{R}(\Omega)\coloneq\mathcal{R}(\Omega,\C)$.
\end{defn}

We observe that $L(\mathcal{P}_{\ast}(\varnothing),E)=L(0,E)=0$
and hence $\mathcal{R}(\overline{\R},E)=L(\mathcal{P}_{\ast}(\overline{\R}),E)$ 
(more precisely, we identify $L(\mathcal{P}_{\ast}(\overline{\R}),E)$ and $\{\{T\}\;|\;T\in L(\mathcal{P}_{\ast}(\overline{\R}),E)\}$). 
Thus there is a reasonable locally convex Hausdorff topology on $\mathcal{R}(\overline{\R},E)$.
For $\Omega\neq\overline{\R}$ there is no reasonable locally convex Hausdorff topology on $\mathcal{R}(\Omega,E)$ by 
\cite[3.10 Bemerkung, p.\ 41--42]{J}.

Let us first take a look at the scalar case. Let $\Omega\subset\Omega_{1}\subset\overline{\R}$ be open. 
It is straightforward to prove that the 
canonical injective (by \prettyref{prop:traeger} a)) linear map
\[
I\colon \mathcal{P}_{\ast}(\overline{\Omega})'/\mathcal{P}_{\ast}(\partial\Omega)' \to
\mathcal{P}_{\ast}(\overline{\Omega}_{1})'/\mathcal{P}_{\ast}(\overline{\Omega}_{1}\setminus\Omega)'.
\]
is surjective (see \cite[p.\ 101--102]{ich}), thus an algebraic isomorphism. 
Therefore the restrictions and a sheaf structure may be defined on 
$\mathcal{R}_{\Omega_{1}}\coloneq\{\mathcal{R}_{\Omega}\;|\;\Omega\subset\Omega_{1}\;\text{open}\}$ 
like in \prettyref{def:restrictions_op_sheaf}. It is not known whether the corresponding map $I$ 
in the vector-valued case is always an algebraic isomorphism 
(see \prettyref{rem:itos_luecke_1}). But this holds if we additionally assume that
\[
\overline{\partial}\colon \mathcal{E}^{exp}(\overline{\C}\setminus K,E)\to \mathcal{E}^{exp}(\overline{\C}\setminus K,E)
\]
is surjective for any compact set $K\subset\overline{\R}$, i.e.\ that $E$ is \emph{admissible}. 
Let us turn to the already indicated statement whose counterpart for hyperfunctions is given in \cite[Lemma 6.2, p.\ 1122]{D/L}.

\begin{lem}\label{lem:I_isom_op}
Let $\Omega_{2}\subset\Omega_{1}\subset\overline{\R}$ be open, $\Omega_{2}\neq\varnothing$, and $E$ sequentially complete 
and admissible. Then the canonical map
\begin{gather*}
I\colon L(\mathcal{P}_{\ast}(\overline{\Omega}_{2}), E)/L(\mathcal{P}_{\ast}(\partial\Omega_{2}), E) \to
L(\mathcal{P}_{\ast}(\overline{\Omega}_{1}), E)/L(\mathcal{P}_{\ast}(\overline{\Omega}_{1}\setminus\Omega_{2}), E),\\
[T]_{2}\mapsto[T],
\end{gather*}
is an algebraic isomorphism.
\end{lem}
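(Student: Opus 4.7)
The plan is to reduce injectivity to \prettyref{prop:traeger}~a) and surjectivity to a weighted Cousin~I splitting on the Silva--K\"othe--Grothendieck side of \prettyref{thm:duality}, where admissibility of $E$ delivers the required solvability of $\overline{\partial}$. For injectivity, suppose $[T]_{2}$ lies in the kernel of $I$, so that $T\in L(\mathcal{P}_{\ast}(\overline{\Omega}_{2}),E)\cap L(\mathcal{P}_{\ast}(\overline{\Omega}_{1}\setminus\Omega_{2}),E)$ (both viewed inside $L(\mathcal{P}_{\ast}(\overline{\R}),E)$ via the density of $\mathcal{P}_{\ast}(\overline{\R})$ in each $\mathcal{P}_{\ast}(K)$). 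Since $\Omega_{2}\subset\Omega_{1}$, one computes $\overline{\Omega}_{2}\cap(\overline{\Omega}_{1}\setminus\Omega_{2})=\overline{\Omega}_{2}\cap\Omega_{2}^{C}=\partial\Omega_{2}$, so \prettyref{prop:traeger}~a) forces $T\in L(\mathcal{P}_{\ast}(\partial\Omega_{2}),E)$ and hence $[T]_{2}=0$; the edge case $\partial\Omega_{2}=\varnothing$ forces $\Omega_{2}=\Omega_{1}=\overline{\R}$ by connectedness of $\overline{\R}$ and is trivial.

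For surjectivity, pick $S\in L(\mathcal{P}_{\ast}(\overline{\Omega}_{1}),E)$ and use \prettyref{thm:duality} to represent it by $F\in\mathcal{O}^{exp}(\overline{\C}\setminus\overline{\Omega}_{1},E)$ with $H_{\overline{\Omega}_{1}}([F])=S$. The heart of the proof is a splitting $F=F_{1}-F_{2}$ modulo $\mathcal{O}^{exp}(\overline{\C},E)$ with $F_{1}\in\mathcal{O}^{exp}(\overline{\C}\setminus\overline{\Omega}_{2},E)$ and $F_{2}\in\mathcal{O}^{exp}(\overline{\C}\setminus(\overline{\Omega}_{1}\setminus\Omega_{2}),E)$. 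To produce it, choose a cut-off $\chi\in\mathcal{C}^{\infty}(\overline{\C}\setminus\partial\Omega_{2})$, bounded with bounded derivatives, such that $\chi\equiv 1$ on an $\overline{\C}$-neighbourhood of $\Omega_{2}$ and $\chi\equiv 0$ on an $\overline{\C}$-neighbourhood of $\overline{\Omega}_{1}\setminus\overline{\Omega}_{2}$ (these are closed and disjoint in $\overline{\C}\setminus\partial\Omega_{2}$). Then $h:=\overline{\partial}\chi\cdot F$ is supported inside $\overline{\C}\setminus\overline{\Omega}_{1}$ and extends by $0$ to an element of $\mathcal{E}^{exp}(\overline{\C}\setminus\partial\Omega_{2},E)$. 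Admissibility of $E$ at the compact set $\partial\Omega_{2}\subset\overline{\R}$ then yields $u\in\mathcal{E}^{exp}(\overline{\C}\setminus\partial\Omega_{2},E)$ with $\overline{\partial}u=h$.

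Setting $F_{1}:=\chi F-u$ on $\overline{\C}\setminus\overline{\Omega}_{2}$ (with $\chi F$ extended by $0$ across $\overline{\Omega}_{1}\setminus\overline{\Omega}_{2}$, using $\chi=0$ there) and $F_{2}:=(\chi-1)F-u$ on $\overline{\C}\setminus(\overline{\Omega}_{1}\setminus\Omega_{2})$ (with $(\chi-1)F$ extended by $0$ across $\Omega_{2}$, using $\chi=1$ there), a direct computation gives $\overline{\partial}F_{1}=\overline{\partial}F_{2}=0$ and $F_{1}-F_{2}=F$ on $\overline{\C}\setminus\overline{\Omega}_{1}$, while the boundedness of $\chi$ combined with \prettyref{prop:Cauchy_estimates} places $F_{i}$ in the stated $\mathcal{O}^{exp}$-space. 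Defining $T:=H_{\overline{\Omega}_{2}}([F_{1}])\in L(\mathcal{P}_{\ast}(\overline{\Omega}_{2}),E)$ and invoking the compatibility relation \eqref{eq:unabh.H} of \prettyref{thm:duality}, the identity $F=F_{1}-F_{2}$ transports to
\[
S-T=-H_{\overline{\Omega}_{1}\setminus\Omega_{2}}([F_{2}])\in L(\mathcal{P}_{\ast}(\overline{\Omega}_{1}\setminus\Omega_{2}),E)
\]
as functionals on $\mathcal{P}_{\ast}(\overline{\Omega}_{1})$, so $I([T]_{2})=[S]$.

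The main obstacle is the careful construction of $\chi$ with $\overline{\partial}\chi\cdot F\in\mathcal{E}^{exp}(\overline{\C}\setminus\partial\Omega_{2},E)$. When $\partial\Omega_{2}\subset\R$ this is routine (take $\chi(z)=\psi(\re z)$ for a suitable $\psi$, so that $\overline{\partial}\chi$ has compact support in $\C$), but when $\pm\infty\in\partial\Omega_{2}$ the transition region reaches the radial points of $\overline{\C}$ and $\chi$ must be designed so that the exponential weights still absorb the derivatives of $\chi$ on the strip where they are non-zero. Once this is arranged, the entire mechanism is driven by admissibility of $E$ applied at $K=\partial\Omega_{2}$.
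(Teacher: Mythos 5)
Your proposal follows essentially the same route as the paper: injectivity via \prettyref{prop:traeger}~a), and surjectivity by transferring to the $\mathcal{O}^{exp}$-side through \prettyref{thm:duality}, splitting with a cut-off that is $1$ near $\Omega_{2}$ and $0$ near $\overline{\Omega}_{1}\setminus\overline{\Omega}_{2}$, and solving $\overline{\partial}u=\overline{\partial}\chi\cdot F$ in $\mathcal{E}^{exp}(\overline{\C}\setminus\partial\Omega_{2},E)$ by admissibility at $K=\partial\Omega_{2}$ --- this is exactly the paper's decomposition $f_{1}=(1-\varphi)f+g$, $f_{2}=\varphi f-g$ up to a sign convention. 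Note, however, that the step you defer is where all the work of the paper's proof lies: a cut-off with globally bounded derivatives on $\C\setminus\partial\Omega_{2}$ cannot exist when $\partial\Omega_{2}\cap\R\neq\varnothing$; what is actually established (via H\"ormander's cut-off construction and the lower bound $\d(z)\geq 1/n$ for $z\in S_{n}(\partial\Omega_{2})$) is boundedness of each $\partial^{\beta}\chi$ on each $S_{n}(\partial\Omega_{2})$, and the membership of $h$, $F_{1}$, $F_{2}$ in the exponentially weighted spaces further requires splitting $S_{n}$ into a piece contained in $S_{2n}(\overline{\Omega}_{1})$ and a bounded remainder kept away from $\R$ (resp.\ from $\overline{\Omega}_{1}$).
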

\begin{proof}
This map is well-defined, in particular, independent of the choice of the representative 
since $\mathcal{P}_{\ast}(\overline{\Omega}_{1})$ is continuously and densely embedded in $\mathcal{P}_{\ast}(\overline{\Omega}_{2})$ 
(see the remark right above \prettyref{prop:traeger}) and thus the embedding of 
$L(\mathcal{P}_{\ast}(\overline{\Omega}_{2}), E)$ into $L(\mathcal{P}_{\ast}(\overline{\Omega}_{1}),E)$ 
is defined as well as the map of $L(\mathcal{P}_{\ast}(\partial\Omega_{2}), E)$ 
into $L(\mathcal{P}_{\ast}(\overline{\Omega}_{1}\setminus\Omega_{2}), E)$ in this manner.

If $\R\subset\Omega_{2}$, then $\overline{\Omega}_{2}=\overline{\Omega}_{1}=\overline{\R}$ and 
therefore $\overline{\Omega}_{1}\setminus\Omega_{2}=\partial\Omega_{2}$. Hence the statement is obviously true. 

Now, let $\R\not\subset\Omega_{2}$. Let $T\in L(\mathcal{P}_{\ast}(\overline{\Omega}_{2}), E)$ 
with $[T]=0$. Then we get by \prettyref{prop:traeger} a)
\[
T\in L(\mathcal{P}_{\ast}(\overline{\Omega}_{2}), E)
 \cap L(\mathcal{P}_{\ast}(\overline{\Omega}_{1}\setminus\Omega_{2}), E)
 = L(\mathcal{P}_{\ast}(\overline{\Omega}_{2}\cap(\overline{\Omega}_{1}\setminus\Omega_{2})),E)
 = L(\mathcal{P}_{\ast}(\partial\Omega_{2}), E)
\]
and thus $[T]_{2}=0$, implying the injectivity of $I$.

The surjectivity of $I$ is equivalent to the surjectivity of the map
\[
I_{0}\colon L(\mathcal{P}_{\ast}(\overline{\Omega}_{1}\setminus\Omega_{2}), E) \times L(\mathcal{P}_{\ast}(\overline{\Omega}_{2}), E)\to
            L(\mathcal{P}_{\ast}(\overline{\Omega}_{1}), E),\; (T_{1},T_{2})\mapsto T_{1}+T_{2}.
\]
By \prettyref{thm:duality} the surjectivity of $I_{0}$ is equivalent to the surjectivity of
\begin{gather*}
I_{1}\colon \mathcal{O}^{exp}(\overline{\C}\setminus( \overline{\Omega}_{1}\setminus\Omega_{2}), E)/\mathcal{O}^{exp}(\overline{\C}, E)
\times \mathcal{O}^{exp}(\overline{\C}\setminus \overline{\Omega}_{2}, E)/\mathcal{O}^{exp}(\overline{\C}, E)\\ 
\to \mathcal{O}^{exp}(\overline{\C}\setminus \overline{\Omega}_{1}, E)/\mathcal{O}^{exp}(\overline{\C}, E), \;
(f_{1},f_{2})\mapsto f_{1}+f_{2},
\end{gather*}
and thus to the surjectivity of
\[
I_{2}\colon \mathcal{O}^{exp}(\overline{\C}\setminus (\overline{\Omega}_{1}\setminus\Omega_{2}), E)
\times \mathcal{O}^{exp}(\overline{\C}\setminus \overline{\Omega}_{2}, E)
\to \mathcal{O}^{exp}(\overline{\C}\setminus \overline{\Omega}_{1}, E), \; (f_{1},f_{2})\mapsto f_{1}+f_{2}.
\]
The proof is now done in several steps, beginning with the construction of a cut-off function. 
We restrict to the case that $\pm\infty\in\overline{\Omega}_{1}$, $-\infty\in\Omega_{2}$ and $\infty\notin\overline{\Omega}_{2}$. 
For the similar treatment of the other cases we refer to the proof of \cite[6.2 Lemma, p.\ 103]{ich}.

(i) There is $x_{0}\in\R$ such that $[x_{0},\infty]\subset \overline{\Omega}_{2}^{C}$ since $\overline{\Omega}_{2}^{C}\subset\overline{\R}$ is open 
and $\infty\notin\Omega_{2}$. Further, there is $\widetilde{x}_{1}\in\R$ such that 
$[-\infty, \widetilde{x}_{1}]\subset \Omega_{2}$, since $\Omega_{2}$ is open and $-\infty\in\Omega_{2}$,
and thus $[-\infty, \widetilde{x}_{1}]\subset (\overline{\Omega}_{1}\setminus\Omega_{2})^{C}$. We define the sets
\[
F_{0}\coloneq (\R\setminus \Omega_{2}) \cup ([x_{0}+2,\infty)\times\left[-1,1\right])\;\;\text{and}\;\;
F_{1}\coloneq (\R\cap\overline{\Omega}_{2}) \cup ((-\infty,\widetilde{x}_{1}-2]\times[-1,1]).
\]
The sets $F_{0}$ and $F_{1}$ are non-empty and closed in $\R^{2}$, $F_{0}\cap\R=\R\setminus\Omega_{2}$, 
$F_{1}\cap\R=\overline{\Omega}_{2}\cap\R$ and $F_{0}\cap F_{1}=\partial\Omega_{2}\cap\R$. 
By \cite[Corollary 1.4.11, p.\ 31]{H1} there exists 
$\varphi\in\mathcal{C}^{\infty}((F_{0}\cap F_{1})^{C})=\mathcal{C}^{\infty}(\R^{2}\setminus\partial\Omega_{2})$, 
$0\leq\varphi\leq 1$, such that $\varphi=0$ on $V_{0}$ and $\varphi=1$ on $V_{1}$ 
where $V_{0}$, $V_{1}\subset\R^{2}$ are open and
\[
V_{0}\supset F_{0}\setminus (F_{0}\cap F_{1})=F_{0}\setminus \partial\Omega_{2}\supset(\R\setminus\overline{\Omega}_{2})
\;\;\text{and}\;\;
V_{1}\supset F_{1}\setminus (F_{0}\cap F_{1})=F_{1}\setminus \partial\Omega_{2}\supset(\R\cap\Omega_{2}).
\]
\begin{center}
\includegraphics[scale=0.85]{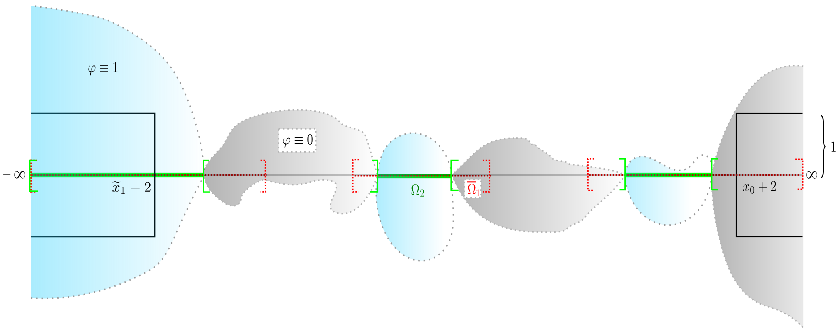}
\captionsetup{type=figure}
\caption{case: $\pm\infty\in\overline{\Omega}_{1}$, $-\infty\in\Omega_{2}$, $\infty\notin\overline{\Omega}_{2}$}
\end{center}
Furthermore,
\begin{equation}\label{eq:cut-off}
|\varphi^{(k)}(z;y^{1},\cdots,y^{k})|\leq C^{k}|y^{1}|\cdots|y^{k}|\frac{\d(z)^{-k}}{d_{1}\cdots d_{k}}
\end{equation}
for all $z\in\R^{2}\setminus \partial \Omega_{2}$ and all $y^{i}\in\R^{2}$, $1\leq i\leq k$, $k\in\N$, 
where $\varphi^{(k)}$ denotes the differential of order $k$ of $\varphi$, $C>0$ is a constant independent of $z$, $y^{i}$ and $k$,
\[
\d(z)\coloneq\max\bigl(\d(z,F_{0}),\d(z,F_{1})\bigr)=\max\bigl(\min_{x\in F_{0}}|z-x|,\min_{x\in F_{1}}|z-x| \bigr)
\]
and $(d_{n})_{n\in\N}$ is any decreasing sequence with $\sum^{\infty}_{n=1}d_{n}=1$, e.g.\ $d_{n}\coloneq (\tfrac{1}{2})^{n}$. 
We observe that for $\beta=(\beta_{1},\beta_{2})\in\N^{2}_{0}$ the relation
\[
\partial^{\beta}\varphi(z)=\varphi^{(|\beta|)}(z;\underbrace{e_{1},\cdots,e_{1}}_{\beta_{1}-\text{times}},
\underbrace{e_{2},\cdots,e_{2}}_{\beta_{2}-\text{times}})
\]
holds between the differential of order $|\beta|$ and the $\beta$th partial derivative where $e_{j}$, $j=1,2$, 
is the $j$th unit vector in $\R^{2}$.
Thus we obtain from \eqref{eq:cut-off} the estimate
\begin{equation}\label{eq:cut-off_partial}
|\partial^{\beta}\varphi(z)|\leq C^{|\beta|}\frac{\d(z)^{-|\beta|}}{d_{1}\cdots d_{|\beta|}},
\quad z\in\R^{2}\setminus \partial \Omega_{2},\,\beta\in\N_{0}^{2},
\end{equation}
where we set $d_{1}\cdots d_{|0|}\coloneq 1$ which is consistent with $|\partial^{0}\varphi|=\varphi\leq 1$.
Let us take a closer look at the right-hand side of this inequality. For $z\in\R^{2}\setminus\partial\Omega_{2}$ 
there is $z_{i}\in F_{i}$ such that $\d(z,F_{i})=|z-z_{i}|$, $i=0,1,$ since $F_{0}$ and $F_{1}$ are closed. 
Let $n\in\N$, $n\geq 2$. We claim that
\[
\d(z)\geq \frac{1}{n}, \quad \;z\in S_{n}(\partial\Omega_{2}).
\]
Let $z\in S_{n}(\partial\Omega_{2})$.

\emph{case} $z_{0},\;z_{1}\in\R$:
Let us assume that $\d(z)<\tfrac{1}{n}$. The definition of the set $S_{n}(\partial\Omega_{2})$ 
implies $z_{i}\notin\partial\Omega_{2}\cap\R$, $i=0,1$. Thus we get by the definition of the sets $F_{i}$ 
that $z_{0}\in\R\setminus\overline{\Omega}_{2}$ and 
$z_{1}\in\R\cap\Omega_{2}$, in particular, $z_{0}\neq z_{1}$. W.l.o.g.\ $z_{0}<z_{1}$. 
Then $O_{0}\coloneq (z_{0},z_{1})\cap(\R\setminus\overline{\Omega}_{2})$ and
$O_{1}\coloneq (z_{0},z_{1})\cap(\R\cap\Omega_{2})$ are disjoint, open sets in $\R$. 
Assume that there is no $\widetilde{z}\in\partial\Omega_{2}\cap\R$ with $z_{0}<\widetilde{z}<z_{1}$.
Due to this assumption, we obtain
\[
  O_{0}\cup O_{1}
=(z_{0},z_{1})\cap\bigl[(\R\setminus\overline{\Omega}_{2})\cup(\R\cap\Omega_{2})\bigr]
=(z_{0},z_{1})\cap(\R\setminus\partial\Omega_{2})=(z_{0},z_{1})
\]
and hence, as $(z_{0},z_{1})$ is connected, $(z_{0},z_{1})\subset O_{0}$ or $(z_{0},z_{1})\subset O_{1}$. 
If $(z_{0},z_{1})\subset O_{0}$, we get $z_{1}\notin\R\cap\Omega_{2}$, 
and if $(z_{0},z_{1})\subset O_{1}$, we have $z_{0}\notin \R\setminus\overline{\Omega}_{2}$, which is a contradiction. 
So there must be a $\widetilde{z}\in\partial\Omega_{2}\cap\R$ with $z_{0}<\widetilde{z}<z_{1}$. 
The convexity of $\D_{\d(z)}(z)$ implies $\widetilde{z}\in(z_{0},z_{1})\subset \D_{\d(z)}(z)$, 
but then the following is valid
\[
     \frac{1}{n}<|z-\widetilde{z}| 
\leq \max(|z-z_{0}|,|z-z_{1}|)
=    \d(z)<\frac{1}{n},
\]
which is again a contradiction.

\emph{case} $(z_{0}\notin\R$, $z_{1}\in\R)$ or $(z_{0}\in\R$, $z_{1}\notin\R)$:
We only consider the first case, the latter one is analogous. 
We have $z_{1}<x_{0}$ and $\re(z_{0})\geq x_{0}+2$. Therefore, we get
\[
|z_{1}-\re(z_{0})|\geq |x_{0}-(x_{0}+2)|=2.
\]
If $|z-z_{0}|<\tfrac{1}{n}$, we obtain by the estimate above
\begin{align*}
 \d(z)
&\geq|z-z_{1}|\geq|\re(z)-z_{1}|\geq|z_{1}-\re(z_{0})|-|\re(z_{0})-\re(z)|>|z_{1}-\re(z_{0})|-\frac{1}{n}\\ 
&\geq 2-\frac{1}{n}\geq\frac{1}{n}.
\end{align*}

\emph{case} $z_{i}\notin\R$, $i=0,1$: 
If $|z-z_{0}|<\tfrac{1}{n}$, we have 
\[
\re(z_{1})\leq \widetilde{x}_{1}-2<\widetilde{x}_{1}<x_{0}<x_{0}+2-\frac{1}{n}\leq\re(z)
\]
and thus we get
\[
\d(z)\geq|z-z_{1}|\geq|\re(z)-\re(z_{1})|\geq 4-\frac{1}{n}>\frac{1}{n}.
\]

Hence the claim is proved and via \eqref{eq:cut-off_partial} we obtain
\begin{equation}\label{eq:cut-off_partial_dist}
|\partial^{\beta}\varphi(z)|\leq C^{|\beta|}\frac{n^{|\beta|}}{d_{1}\cdots d_{|\beta|}},\quad z\in S_{n}(\partial \Omega_{2}).
\end{equation}

(ii) Let $f\in \mathcal{O}^{exp}(\overline{\C}\setminus \overline{\Omega}_{1}, E)$. 
Due to the choice of $\varphi$, the function $\overline{\partial}(\varphi f)$ may be regarded 
as an element of $\mathcal{C}^{\infty}(\R^{2}\setminus\partial\Omega_{2},E)$ 
by $\mathcal{C}^{\infty}$-extension via $\overline{\partial}(\varphi f)\coloneq 0$ on $\R\setminus\partial\Omega_{2}$. 
Furthermore,
\[
\overline{\partial}(\varphi f)(z)=
\begin{cases}
0\; &,\;z\in V_{0}\cup V_{1},\\
(\overline{\partial}\varphi)(z)f(z) &,\;\text{else},
\end{cases}
\]
is valid.

Let $n\in\N$, $n\geq 2$, $m\in\N_{0}$ and $\alpha\in\mathfrak{A}$. 
We define the set $S(n)\coloneq S_{n}(\partial\Omega_{2})\setminus(V_{0}\cup V_{1})$. 
By applying the Leibniz rule (see e.g.\ \cite[Proposition 3.13, p.\ 242]{kruse2018_2}), we obtain
\begin{flalign}\label{eq:estimate_molified_f}
&\hspace{0.35cm}|\overline{\partial}(\varphi f)|_{\partial\Omega_{2},n,m,\alpha}\nonumber\\
&=\sup_{\substack{z\in S_{n}(\partial\Omega_{2})\\ \beta\in\N^{2}_{0},|\beta|\leq m}}
  p_{\alpha}(\partial^{\beta}\overline{\partial}(\varphi f)(z))\e^{-\frac{1}{n}|\re(z)|}\nonumber\\
&\leq\sup_{\substack{z\in S_{n}(\partial\Omega_{2})\setminus(V_{0}\cup V_{1})\\ \beta\in\N^{2}_{0},|\beta|\leq m}}
     \sum_{\gamma\leq\beta}\dbinom{\beta}{\gamma}|\partial^{\beta-\gamma}\overline{\partial}\varphi(z)|p_{\alpha}(\partial^{\gamma}f(z))
     \e^{-\frac{1}{n}|\re(z)|}\nonumber\\
&\underset{\mathclap{\eqref{eq:real.compl.part.deriv.1}}}{\leq}\;(m!)^{2}\sum_{|\gamma|\leq m+1}\sup_{z\in S(n)}|\partial^{\gamma}\varphi(z)|
     \underbrace{\sup_{\substack{z\in S(n)\\ \beta\in\N^{2}_{0},|\beta|\leq m}}
     p_{\alpha}(\partial_{\C}^{|\beta|}f(z))\e^{-\frac{1}{n}|\re(z)|}}_{\eqqcolon C(f)}\nonumber\\
&\underset{\mathclap{\eqref{eq:cut-off_partial_dist}}}{\leq}\;(m!)^{2}C(f)
 \sum_{|\gamma|\leq m+1}C^{|\gamma|}\frac{n^{|\gamma|}}{d_{1}\cdots d_{|\gamma|}}\nonumber\\
&\leq (m!)^{2}\frac{[\max(C,1)]^{m+1}}{d_{1}\cdots d_{m+1}}C(f)\sum_{|\gamma|\leq m+1}n^{|\gamma|}
\end{flalign}
where we used the properties of $(d_{j})$, which imply $0<d_{j}<1$ for all $j\in\N$, in the last estimate.

Now, we have to take a closer look at $C(f)$. We decompose the set $S(n)$ in the following manner:
\[
 S(n)
=\underbrace{[S(n)\cap\{z\in\C\;|\;|\im(z)|>\tfrac{1}{2n}\}]}_{\subset S_{2n}(\overline{\Omega}_{1})}
\cup\underbrace{[S(n)\setminus\{z\in\C\;|\;|\im(z)|>\tfrac{1}{2n}\}]}_{\eqqcolon M}
\]
\begin{center}
\includegraphics[scale=0.85]{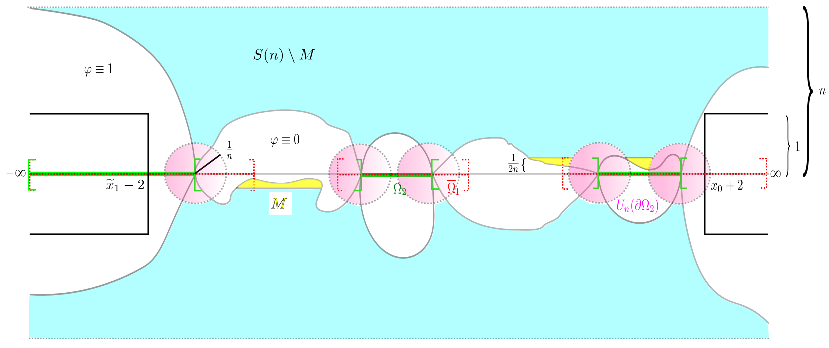}
\captionsetup{type=figure}
\caption{case: $\pm\infty\in\overline{\Omega}_{1}$, $-\infty\in\Omega_{2}$, $\infty\notin\overline{\Omega}_{2}$}
\end{center}
Due to \prettyref{prop:Cauchy_estimates}, we get for $r\coloneq\tfrac{1}{2}(\tfrac{1}{2n}-\tfrac{1}{3n})$
\begin{align}\label{eq:estimate_C(f)} 
C(f)&\leq \sup_{\substack{z\in S_{2n}(\overline{\Omega}_{1})\\ \beta\in\N^{2}_{0},|\beta|\leq m}}
 p_{\alpha}(\partial_{\C}^{|\beta|}f(z))\e^{-\frac{1}{n}|\re(z)|}+ \sup_{\substack{z\in M\\ \beta\in\N^{2}_{0},|\beta|\leq m}}
 p_{\alpha}(\partial_{\C}^{|\beta|}f(z))\e^{-\frac{1}{n}|\re(z)|}\nonumber\\
&\leq \e^{\frac{r}{n}}\frac{m!}{r^{m}}|f|_{\overline{\Omega}_{1},3n,\alpha}
 +\sup_{\substack{z\in M\\ \beta\in\N^{2}_{0},|\beta|\leq m}}
 p_{\alpha}(\partial_{\C}^{|\beta|}f(z))\e^{-\frac{1}{n}|\re(z)|}.
\end{align}
Let us turn our attention to the set $M$. First, we observe that
\[
\R\subset\bigl[\underbrace{V_{0}\cup V_{1}}_{\supset \R\setminus \partial\Omega_{2}} 
               \cup \bigcup_{x\in \partial\Omega_{2}\cap\R}\D_{1/n}(x)\bigr]\eqqcolon V.
\]
$V\subset\R^{2}$ is open as the union of open sets and so we get by the definition of $M$ that
\begin{equation}\label{eq:closure_M}
\overline{M}\subset\overline{V^{C}}=V^{C}\subset(\R^{2}\setminus\R).
\end{equation}
We claim that $M$ is bounded. As $|\im(z)|\leq \tfrac{1}{2n}$ for every $z\in M$, 
it suffices to prove that there is $C_{1}>0$ such that $|\re(z)|\leq C_{1}$ for every $z\in M$. 
The choice of the sets $F_{0}$ and $F_{1}$ gives $C_{1}\coloneq\max(|\widetilde{x}_{1}-2|,|x_{0}+2|)$.
Hence $\overline{M}$ is compact and we have by \eqref{eq:closure_M} and the continuity of $\partial_{\C}^{|\beta|}f$ 
on $\R^{2}\setminus\R$ for all $\beta\in\N^{2}_{0}$ that
\[
    \sup_{\substack{z\in M\\\beta\in\N^{2}_{0},|\beta|\leq m}}p_{\alpha}(\partial_{\C}^{|\beta|}f(z))\e^{-\frac{1}{n}|\re(z)|}
\leq\sup_{\substack{z\in \overline{M}\\ \beta\in\N^{2}_{0},|\beta|\leq m}}
    p_{\alpha}(\partial_{\C}^{|\beta|}f(z))\e^{-\frac{1}{n}|\re(z)|}
<\infty .
\]
Thus $C(f)<\infty$ by \eqref{eq:estimate_C(f)} and therefore $|\overline{\partial}(\varphi f)|_{\partial\Omega_{2},n,m,\alpha}<\infty$ 
for all $n\in\N$, $n\geq 2$, $m\in\N_{0}$ and $\alpha\in\mathfrak{A}$ by \eqref{eq:estimate_molified_f}, implying
$\overline{\partial}(\varphi f)\in\mathcal{E}^{exp}(\overline{\C}\setminus \partial\Omega_{2},E)$. 
As $E$ is admissible, there is $g\in\mathcal{E}^{exp}(\overline{\C}\setminus \partial\Omega_{2},E)$ 
such that
\begin{equation}\label{eq:solution_CR_molified}
\overline{\partial}g=\overline{\partial}(\varphi f).
\end{equation}

(iii) We set $f_{1}\coloneq (1-\varphi)f+g$ and $f_{2}\coloneq \varphi f-g.$ It remains to be proved that
$f_{1}\in\mathcal{O}^{exp}(\overline{\C}\setminus(\overline{\Omega}_{1}\setminus\Omega_{2}),E)$ and
$f_{2}\in\mathcal{O}^{exp}(\overline{\C}\setminus \overline{\Omega}_{2}, E)$. The proof is quite similar to part (ii).
$f_{1}$ is defined on $\C\setminus(\overline{\Omega}_{1}\setminus\Omega_{2})$ (by setting $(1-\varphi)f\coloneq 0$ on $\Omega_{2}\cap\R$) 
and can be regarded as an element of $\mathcal{O}(\C\setminus(\overline{\Omega}_{1}\setminus\Omega_{2}),E)$ 
due to \eqref{eq:solution_CR_molified}. 

Let $n\in\N$, $n\geq 2$, and set $S(n)\coloneq S_{n}(\overline{\Omega}_{1}\setminus\Omega_{2})\setminus V_{1}$. 
Remark that $S_{n}(\overline{\Omega}_{1}\setminus\Omega_{2})\subset S_{n}(\partial\Omega_{2})$ and
\[
S(n)
=\underbrace{[S(n)\cap\{z\in\C\;|\;|\im(z)|>\tfrac{1}{2n}\}]}_{\subset S_{2n}(\overline{\Omega}_{1})}
\cup\underbrace{[S(n)\setminus\{z\in\C\;|\;|\im(z)|>\tfrac{1}{2n}\}]}_{\eqqcolon M}.
\]
\begin{center}
\includegraphics[scale=0.85]{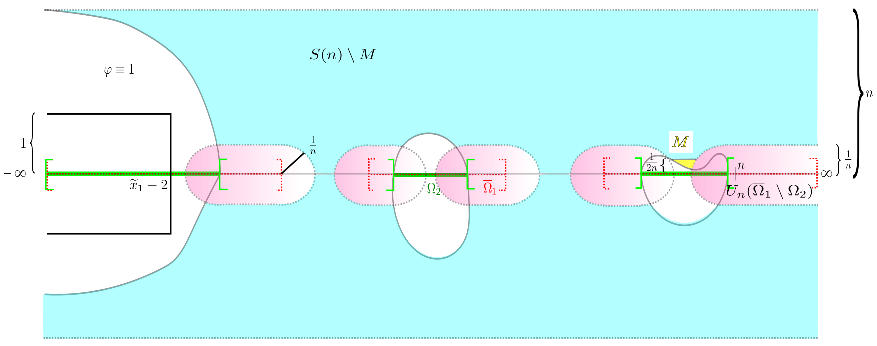}
\captionsetup{type=figure}
\caption{case: $\pm\infty\in\overline{\Omega}_{1}$, $-\infty\in\Omega_{2}$, $\infty\notin\overline{\Omega}_{2}$}
\end{center}
For $\alpha\in\mathfrak{A}$ we have by the choice of $\varphi$
\begin{align}\label{eq:estimate_f_1}
  |f_{1}|_{\overline{\Omega}_{1}\setminus\Omega_{2},n,\alpha}
&=\sup_{z\in S_{n}(\overline{\Omega}_{1}\setminus\Omega_{2})}p_{\alpha}(f_{1}(z))\e^{-\frac{1}{n}|\re(z)|}\nonumber\\
&\leq \underbrace{\sup_{z\in S_{n}(\partial\Omega_{2})}p_{\alpha}(g(z))\e^{-\frac{1}{n}|\re(z)|}}_{=|g|_{\partial\Omega_{2},n,0,\alpha}}
 +\sup_{z\in S_{n}(\overline{\Omega}_{1}\setminus\Omega_{2})}p_{\alpha}((1-\varphi)f(z))\e^{-\frac{1}{n}|\re(z)|}\nonumber\\
&=|g|_{\partial\Omega_{2},n,0,\alpha}+\sup_{z\in S(n)}\underbrace{|1-\varphi(z)|}_{\leq 1}
  p_{\alpha}(f(z))\e^{-\frac{1}{n}|\re(z)|}\nonumber\\
&\leq |g|_{\partial\Omega_{2},n,0,\alpha}+\underbrace{\sup_{z\in S_{2n}(\overline{\Omega}_{1})}
  p_{\alpha}(f(z))\e^{-\frac{1}{n}|\re(z)|}}_{=|f|_{\overline{\Omega}_{1},2n,\alpha}}
 +\sup_{z\in M}p_{\alpha}(f(z))\e^{-\frac{1}{n}|\re(z)|}\nonumber\\
&=|g|_{\partial\Omega_{2},n,0,\alpha}+|f|_{\overline{\Omega}_{1},2n,\alpha}+\sup_{z\in M} p_{\alpha}(f(z))\e^{-\frac{1}{n}|\re(z)|}.
\end{align}
Again, we have to take a closer look at the set $M$. First, we observe that
\[
\R\cap\overline{\Omega}_{1}\subset \bigl[ \underbrace{ V_{1}}_{\supset \R\cap\Omega_{2}} \cup 
                                   \bigcup_{x\in (\overline{\Omega}_{1}\setminus\Omega_{2})\cap\R}\D_{\tfrac{1}{n}}(x)\bigr] \eqqcolon V.
\]
$V\subset\R^{2}$ is open and so we get by the definition of the set $M$
\[
\overline{M}\subset\overline{ V^{C}}=V^{C}\subset (\R^{2}\setminus\overline{\Omega}_{1}).
\]
Like in part (ii) the set $M$ is bounded because the real part is bounded with 
$|\re(z)|\leq \max(|\widetilde{x}_{1}-2|,n)$ for all $z\in M$.
Since $f$ is continuous on $\R^{2}\setminus\overline{\Omega}_{1}$, we obtain
\[
\sup_{z\in M}p_{\alpha}(f(z))\e^{-\frac{1}{n}|\re(z)|}<\infty.
\]
Thus we get $|f_{1}|_{\overline{\Omega}_{1}\setminus\Omega_{2},n,\alpha}<\infty$ for every $n\in\N$ and $\alpha\in\mathfrak{A}$ 
by \eqref{eq:estimate_f_1}, implying $f_{1}\in\mathcal{O}^{exp}(\overline{\C}\setminus(\overline{\Omega}_{1}\setminus\Omega_{2}),E)$.

$f_{2}$ is defined on $\C\setminus\overline{\Omega}_{2}$ (by setting $\varphi f\coloneq 0$ 
on $\overline{\Omega}_{1}\setminus\overline{\Omega}_{2}$) 
and can be regarded as an element of $\mathcal{O}(\C\setminus\overline{\Omega}_{2},E)$ due to \eqref{eq:solution_CR_molified}. 
Let $n\in\N$, $n\geq 2$. We set $S(n)\coloneq S_{n}(\overline{\Omega}_{2})\setminus V_{0}$ and remark that 
$S_{n}(\overline{\Omega}_{2})\subset S_{n}(\partial\Omega_{2})$ as well as
\[
  S(n)
=\underbrace{\bigl[S(n)\cap\{z\in\C\;|\;|\im(z)|>\tfrac{1}{2n}\}\bigr]}_{\subset S_{2n}(\overline{\Omega}_{1})}
 \cup\underbrace{\bigl[S(n)\setminus\{z\in\C\;|\;|\im(z)|>\tfrac{1}{2n}\}\bigr]}_{\eqqcolon M}.
\]
\begin{center}
\includegraphics[scale=0.85]{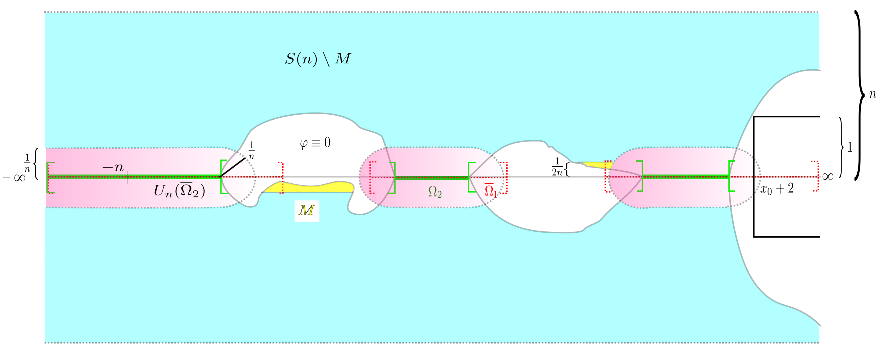}
\captionsetup{type=figure}
\caption{case: $\pm\infty\in\overline{\Omega}_{1}$, $-\infty\in\Omega_{2}$, $\infty\notin\overline{\Omega}_{2}$}
\end{center}
For $\alpha\in\mathfrak{A}$ we have by the choice of $\varphi$
\begin{align}\label{eq:estimate_f_2}
  |f_{2}|_{\overline{\Omega}_{2},n,\alpha}
&=\sup_{z\in S_{n}(\overline{\Omega}_{2})}p_{\alpha}(f_{2}(z))\e^{-\frac{1}{n}|\re(z)|}\nonumber\\
&\leq \underbrace{\sup_{z\in S_{n}(\partial\Omega_{2})}p_{\alpha}(g(z))\e^{-\frac{1}{n}|\re(z)|}}_{=|g|_{\partial\Omega_{2},n,0,\alpha}}
 +\sup_{z\in S_{n}(\overline{\Omega}_{2})}p_{\alpha}(\varphi f(z))\e^{-\frac{1}{n}|\re(z)|}\nonumber\\
&=|g|_{\partial\Omega_{2},n,0,\alpha}+\sup_{z\in S(n)}\underbrace{|\varphi(z)|}_{\leq 1}
  p_{\alpha}(f(z))\e^{-\frac{1}{n}|\re(z)|}\nonumber\\
&\leq |g|_{\partial\Omega_{2},n,0,\alpha}+\underbrace{\sup_{z\in S_{2n}(\overline{\Omega}_{1})}
  p_{\alpha}(f(z))\e^{-\frac{1}{n}|\re(z)|}}_{=|f|_{\overline{\Omega}_{1},2n,\alpha}}
 +\sup_{z\in M}p_{\alpha}(f(z))\e^{-\frac{1}{n}|\re(z)|}\nonumber\\
&=|g|_{\partial\Omega_{2},n,0,\alpha}+|f|_{\overline{\Omega}_{1},2n,\alpha}+\sup_{z\in M} p_{\alpha}(f(z))\e^{-\frac{1}{n}|\re(z)|}.
\end{align}
Again, we have to take a closer look at the set $M$ and observe that
\[
\R\subset \bigl[\underbrace{ V_{0}}_{\supset\R\setminus\overline{\Omega}_{2}}\cup
                \bigcup_{x\in \overline{\Omega}_{2}\cap\R}\D_{\tfrac{1}{n}}(x)\bigr] \eqqcolon V.
\]
$V\subset\R^{2}$ is open and so we get by the definition of the set $M$
\[
\overline{M}\subset\overline{ V^{C}}=V^{C}\subset(\R^{2}\setminus\R).
\]
Like before the set $M$ is bounded because the real part is bounded with $|\re(z)|\leq \max(|-n|,|x_{0}+2|)$ for all $z\in M$.
Again, we gain
\[
\sup_{z\in M}p_{\alpha}(f(z))\e^{-\frac{1}{n}|\re(z)|}<\infty
\]
and thus get $|f_{2}|_{\overline{\Omega}_{2},n,\alpha}<\infty$ for every $n\in\N$, $n\geq 2$, 
and $\alpha\in\mathfrak{A}$ by \eqref{eq:estimate_f_2}, 
implying $f_{2}\in\mathcal{O}^{exp}(\overline{\C}\setminus \overline{\Omega}_{2},E)$.
Obviously $f_{1}+f_{2}=f$, completing the proof by part (i).
\end{proof}

Ito (see \cite[p.\ 15, l.\ 16]{Ito2002}) states that \prettyref{lem:I_isom_op} 
is valid for any $\C$-lcHs E, but he does not prove that $I$ is surjective. 
Nevertheless, he states as an open problem (see \cite[Problem A, p.\ 17]{Ito2002}) 
whether for two compact sets $K_{1},K_{2}\subset \overline{\R}$ the map
\[
\mathcal{L}\colon L(\mathcal{P}_{\ast}(K_{1}),E)\times L(\mathcal{P}_{\ast}(K_{2}),E)\to L(\mathcal{P}_{\ast}(K_{1}\cup K_{2}),E),
\]
given by $\mathcal{L}(T_{1},T_{2})\coloneq T_{1}-T_{2}$, is surjective for non-Fr\'echet spaces $E$. 

\begin{rem}\label{rem:itos_luecke_1}
Let $\Omega_{2}\subset\Omega_{1}\subset\overline{\R}$ be open and $E$ a sequentially complete $\C$-lcHs. 
Then the following assertions are equivalent:
\begin{enumerate}
\item[a)] The canonical map 
\[
I\colon  L(\mathcal{P}_{\ast}(\overline{\Omega}_{2}), E)/L(\mathcal{P}_{\ast}(\partial\Omega_{2}), E) \to
L(\mathcal{P}_{\ast}(\overline{\Omega}_{1}), E)/L(\mathcal{P}_{\ast}(\overline{\Omega}_{1}\setminus\Omega_{2}),E)
\]
is an algebraic isomorphism.
\item[b)] The map
\[
\mathcal{L}\colon L(\mathcal{P}_{\ast}(\overline{\Omega}_{1}\setminus\Omega_{2}),E)\times
L(\mathcal{P}_{\ast}(\overline{\Omega}_{2}),E)\to L(\mathcal{P}_{\ast}(\overline{\Omega}_{1}),E)
\]
is surjective.
\end{enumerate}
\end{rem}
\begin{proof}
$I$ is obviously surjective if and only if $\mathcal{L}$ is surjective. 
Moreover, $I$ is always linear and injective by \prettyref{prop:traeger} a).
\end{proof}

The corresponding issue in Ito's paper \cite{Ito1998} on vector-valued hyperfunctions 
(see \cite[p.\ 34, l.\ 2, Problem A, p.\ 35]{Ito1998})
was pointed out by Doma\'nski and Langenbruch in \cite[Remark 6.3, p.\ 1123]{D/L}. 
Using \prettyref{lem:I_isom_op}, we can define the restrictions on 
$\mathcal{R}(\Omega,E)$, if $E$ is sequentially complete and admissible, as follows.

\begin{defn}\label{def:restrictions_op_sheaf}
Let $E$ be sequentially complete and admissible. 
For open sets $\Omega_{2}\subset\Omega_{1}\subset\overline{\R}$, $\Omega_{2}\neq \varnothing$, we denote by
\[
q\colon L(\mathcal{P}_{\ast}(\overline{\Omega}_{1}),E)/L(\mathcal{P}_{\ast}(\partial\Omega_{1}),E) \to
L(\mathcal{P}_{\ast}(\overline{\Omega}_{1}),E)/L(\mathcal{P}_{\ast}(\overline{\Omega}_{1}\setminus\Omega_{2}),E)
\]
the canonical quotient map. We define the restriction maps via \prettyref{lem:I_isom_op} by
\[
R_{\Omega_{1},\Omega_{2}}\colon\mathcal{R}(\Omega_{1},E)\to\mathcal{R}(\Omega_{2},E),\;
R_{\Omega_{1},\Omega_{2}}([T])\coloneq [T]_{\mid\Omega_{2}}\coloneq I^{-1}\bigl(q([T])\bigr),
\]
and for an open set $\Omega_{1}\subset\overline{\R}$
\[
R_{\Omega_{1},\varnothing}\colon\mathcal{R}(\Omega_{1},E)\to \mathcal{R}(\varnothing,E),\; 
R_{\Omega_{1},\varnothing}([T])\coloneq [T]_{\mid\varnothing}\coloneq 0.
\]
\end{defn}

The next lemma is the counterpart of \cite[Lemma 6.5, p.\ 1124]{D/L}.

\begin{lem}\label{lem:presheaf_S1}
Let $\Omega\subset\overline{\R}$ be open, $E$ sequentially complete and admissible and set
$\mathcal{R}_{\Omega}(E)\coloneq\{\mathcal{R}(\omega,E)\;|\;\omega\subset\Omega\;\text{open}\}$. 
Then $\mathcal{R}_{\Omega}(E)$, equipped with the restrictions of \prettyref{def:restrictions_op_sheaf}, 
forms a presheaf on $\Omega$, satisfying the condition $(S1)$:

For every family of open sets $\{\omega_{j}\subset\Omega\;|\;j\in J\}$ with $\omega\coloneq\bigcup_{j\in J}\omega_{j}$ holds:
If $[T]\in\mathcal{R}(\omega,E)$ such that $R_{\omega,\omega_{j}}([T])=0$ for all $j\in J$, then $[T]=0$.
\end{lem}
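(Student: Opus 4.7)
My plan is to handle the presheaf axioms and the separation axiom $(S1)$ separately, with $(S1)$ being the substantive part.

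First, to verify the presheaf structure I would check $R_{\omega,\omega} = \id$ and the composition law $R_{\omega_{2},\omega_{3}}\circ R_{\omega_{1},\omega_{2}} = R_{\omega_{1},\omega_{3}}$ for open $\omega_{3}\subset\omega_{2}\subset\omega_{1}\subset\Omega$. These are routine diagram chases: the identity law follows because for $\omega_{2}=\omega_{1}$ the quotient $q$ is the identity and the corresponding $I$ reduces to the identity on $\mathcal{R}(\omega_{1},E)$. The composition law follows from the fact that both $q$ and the two copies of $I^{-1}$ are induced by the same inclusion of supports, so the diagram built from \prettyref{lem:I_isom_op} at the three levels $\omega_{3}\subset\omega_{2}\subset\omega_{1}$ commutes. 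The cases involving $\varnothing$ are immediate because $\mathcal{R}(\varnothing,E)=0$.

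For $(S1)$, let $\{\omega_{j}\;|\;j\in J\}$ be a family of open subsets of $\Omega$ with $\omega:=\bigcup_{j\in J}\omega_{j}$, and let $T\in L(\mathcal{P}_{\ast}(\overline{\omega}),E)$ represent a class $[T]\in\mathcal{R}(\omega,E)$ with $R_{\omega,\omega_{j}}([T])=0$ for every $j\in J$. Unwinding \prettyref{def:restrictions_op_sheaf}, $R_{\omega,\omega_{j}}([T])=I^{-1}(q([T]))$ where $I$ is the algebraic isomorphism provided by \prettyref{lem:I_isom_op} (applicable because $E$ is admissible) and $q$ is the well-defined quotient map $L(\mathcal{P}_{\ast}(\overline{\omega}),E)/L(\mathcal{P}_{\ast}(\partial\omega),E)\to L(\mathcal{P}_{\ast}(\overline{\omega}),E)/L(\mathcal{P}_{\ast}(\overline{\omega}\setminus\omega_{j}),E)$ (well-defined since $\partial\omega\subset\overline{\omega}\setminus\omega_{j}$). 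Since $I^{-1}$ is injective, the vanishing condition translates into
\[
T\in L(\mathcal{P}_{\ast}(\overline{\omega}\setminus\omega_{j}),E)\quad\text{for every }j\in J.
\]

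Now I invoke the support machinery: by \prettyref{prop:traeger} b) there is a minimal compact set $K\subset\overline{\omega}$ with $T\in L(\mathcal{P}_{\ast}(K),E)$. Since $\overline{\R}$ is compact, $\overline{\omega}$ is compact, and hence each $\overline{\omega}\setminus\omega_{j}$ is a compact subset of $\overline{\omega}$. The displayed containment above combined with minimality of $K$ gives $K\subset\overline{\omega}\setminus\omega_{j}$, equivalently $K\cap\omega_{j}=\varnothing$, for every $j\in J$. Taking the union over $j$ yields $K\cap\omega=\varnothing$, so $K\subset\overline{\omega}\setminus\omega=\partial\omega$. Therefore $T\in L(\mathcal{P}_{\ast}(K),E)\subset L(\mathcal{P}_{\ast}(\partial\omega),E)$, i.e.\ $[T]=0$ in $\mathcal{R}(\omega,E)$.

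I do not expect a real obstacle here: the admissibility assumption enters exactly once, in invoking \prettyref{lem:I_isom_op} to translate restrictions into the concrete membership statement, after which the argument is a clean application of the support concept. The only care needed is in checking that the various $\overline{\omega}\setminus\omega_{j}$ are compact (which uses compactness of $\overline{\R}$) so that \prettyref{prop:traeger} b) applies, and in the trivial bookkeeping when some $\omega_{j}$ equals $\varnothing$ or when $\omega=\varnothing$.
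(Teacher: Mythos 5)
Your argument is correct and essentially the paper's own: the paper likewise reduces $R_{\omega,\omega_{j}}([T])=0$ to the membership $T\in L(\mathcal{P}_{\ast}(\overline{\omega}\setminus\omega_{j}),E)$ and then intersects over $j\in J$ using the support notion from \prettyref{prop:traeger} b) to conclude $T\in L(\mathcal{P}_{\ast}(\partial\omega),E)$. The only step you compress is the composition law $R_{\omega_{2},\omega_{3}}\circ R_{\omega_{1},\omega_{2}}=R_{\omega_{1},\omega_{3}}$, which the paper verifies explicitly via the telescoping identity $T_{0}-T_{2}=(T_{0}-T)+(T-T_{1})+(T_{1}-T_{2})$ for chosen representatives together with \prettyref{prop:traeger} a); this is indeed routine, but it is the one place where slightly more than a formal diagram chase is required.
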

\begin{proof}
(i) We begin with the proof that $\mathcal{R}_{\Omega}(E)$ with its restrictions is a presheaf. 
We clearly have $R_{\omega,\omega}=\id_{\mathcal{R}(\omega,E)}$. 
Let $\omega_{3}\subset\omega_{2}\subset\omega_{1}\subset\Omega$ be open. 
We have to show that $R_{\omega_{2},\omega_{3}}\circ R_{\omega_{1},\omega_{2}}=R_{\omega_{1},\omega_{3}}$ is valid. 
This is obvious if one of the sets is empty, so let them all be non-empty. 
Let $T\in L(\mathcal{P}_{\ast}(\overline{\omega}_{1}),E)$. Let $T_{0}\in L(\mathcal{P}_{\ast}(\overline{\omega}_{3}),E)$ 
be a representative of $R_{\omega_{1},\omega_{3}}([T]_{1})$, let $T_{1}\in L(\mathcal{P}_{\ast}(\overline{\omega}_{2}),E)$ 
be a representative of $R_{\omega_{1},\omega_{2}}([T]_{1})$ and $T_{2}\in L(\mathcal{P}_{\ast}(\overline{\omega}_{3}),E)$ 
a representative of $R_{\omega_{2},\omega_{3}}\circ R_{\omega_{1},\omega_{2}}([T]_{1})=R_{\omega_{2},\omega_{3}}([T_{1}]_{2})$. 
By the definition of the restrictions the following is true:
\begin{enumerate}
	\item [(1)] $T_{0}-T\in L(\mathcal{P}_{\ast}(\overline{\omega}_{1}\setminus \omega_{3}),E)$,
	\item [(2)] $T_{1}-T\in L(\mathcal{P}_{\ast}(\overline{\omega}_{1}\setminus \omega_{2}),E)$,
	\item [(3)] $T_{2}-T_{1}\in L(\mathcal{P}_{\ast}(\overline{\omega}_{2}\setminus \omega_{3}),E)$.
\end{enumerate}
First, we observe that
\begin{equation}\label{eq:diff_T02}
T_{0}-T_{2}\in L(\mathcal{P}_{\ast}(\overline{\omega}_{3}),E).
\end{equation}
It remains to be shown that $T_{0}-T_{2}\in L(\mathcal{P}_{\ast}(\partial\omega_{3}),E)$. The equality
\[
T_{0}-T_{2}=(T_{0}-T)+(T-T_{1})+(T_{1}-T_{2})
\]
holds on $\mathcal{P}_{\ast}(\overline{\R})$ and the right-hand side is an element of
\[
L(\mathcal{P}_{\ast}(\overline{\omega}_{1}\setminus \omega_{3})
  \cap\mathcal{P}_{\ast}(\overline{\omega}_{1}\setminus \omega_{2})
  \cap \mathcal{P}_{\ast}(\overline{\omega}_{2}\setminus \omega_{3}),E)
=L(\mathcal{P}_{\ast}(\overline{\omega}_{1}\setminus \omega_{3}),E)
\]
by $(1)$--$(3)$ and as $\omega_{3}\subset\omega_{2}\subset\omega_{1}$. So due to the remark above \prettyref{prop:traeger}, 
$T_{0}-T_{2}$ can also be regarded as an element of $L(\mathcal{P}_{\ast}(\overline{\omega}_{1}\setminus \omega_{3}),E)$ 
and thus we get by \prettyref{prop:traeger} a) and \eqref{eq:diff_T02}
\begin{align*}
T_{0}-T_{2}&\in L(\mathcal{P}_{\ast}(\overline{\omega}_{3}),E)\cap L(\mathcal{P}_{\ast}(\overline{\omega}_{1}\setminus \omega_{3}),E)
 =L(\mathcal{P}_{\ast}(\overline{\omega}_{3}\cap(\overline{\omega}_{1}\setminus \omega_{3})),E)\\
&=L(\mathcal{P}_{\ast}(\partial\omega_{3}),E).
\end{align*}

(ii) Let $T$ be like in $(S1)$ and $j\in J$. Then for a representative $T_{j}$ of $R_{\omega,\omega_{j}}([T])$ 
it holds $T_{j}\in L(\mathcal{P}_{\ast}(\partial\omega_{j}),E)$, since $R_{\omega,\omega_{j}}([T])=0$, 
and $T-T_{j}\in L(\mathcal{P}_{\ast}(\overline{\omega}\setminus \omega_{j}),E)$ by the definition of the restriction. 
Again, the equality
\[
T=(T-T_{j})+T_{j}
\]
holds on $\mathcal{P}_{\ast}(\overline{\R})$ and the right-hand side is an element of
\[
 L(\mathcal{P}_{\ast}(\overline{\omega}\setminus \omega_{j})\cap\mathcal{P}_{\ast}(\partial\omega_{j}),E)
=L(\mathcal{P}_{\ast}(\overline{\omega}\setminus \omega_{j}),E).
\]
By the same argument as in part (i), we can regard $T$ as an element of 
$L(\mathcal{P}_{\ast}(\overline{\omega}\setminus \omega_{j}),E)$ and get
\[
\operatorname{supp} T\subset \overline{\omega}\setminus \omega_{j}
\]
where the support $\operatorname{supp} T$ is meant in the sense of \prettyref{prop:traeger} b). 
Since this is valid for all $j\in J$, we obtain
\[
 \operatorname{supp} T\subset\bigcap_{j\in J}(\overline{\omega}\setminus \omega_{j})
=\overline{\omega}\setminus\bigcup_{j\in J}\omega_{j}
=\overline{\omega}\setminus\omega
=\partial\omega
\]
and thus $T\in L(\mathcal{P}_{\ast}(\partial\omega),E)$, i.e.\ $[T]=0$.
\end{proof}

For the special case $\Omega=\overline{\R}$ we use the notation $\mathcal{R}(E)\coloneq\mathcal{R}_{\overline{\R}}(E)$. 
We will see that the presheaf $\mathcal{R}_{\Omega}(E)$, which satisfies $(S1)$, is already a sheaf, so satisfies, in addition, 
the sheaf condition $(S2)$ if we assume that $E$ is not only admissible, but strictly admissible. 
For this purpose we introduce a boundary value representation of $\mathcal{R}(E)$ in the following way. 
Let $\Omega\subset \overline{\R}$, $\Omega\neq\varnothing$, be an open set and we define
\[
\mathcal{U}(\Omega)\coloneq
\{U\;|\; U\subset\overline{\C}\;\text{open},\; U\cap\overline{\R}=\Omega\}.
\]
Now, we define, similar to \prettyref{def:smooth_weighted_space}, spaces of vector-valued 
slowly increasing holomorphic functions on $U\setminus\overline{\R}$ resp.\ $U$ for $U\in\mathcal{U}(\Omega)$.

If $-\infty\in\Omega$ or $\infty\in\Omega$, we define
\[
  \mathcal{O}^{exp}(U\setminus\overline{\R}, E)
\coloneq \{f\in\mathcal{O}((U\setminus\overline{\R})\cap\C,E)\;|\;\forall\;n\in\N,\,n\geq 2,
     \,\alpha\in\mathfrak{A}:\;\vertiii{f}_{U^{\ast},n,\alpha}<\infty\}
\]
where
\[
\vertiii{f}_{U^{\ast},n,\alpha}\coloneq\sup_{z \in S_{n}(U)}p_{\alpha}(f(z))\e^{-\frac{1}{n}|\re(z)|}
\]
and
\begin{flalign*}
&\hspace{0.35cm}S_{n}(U)\\
&\coloneq
\begin{cases}
U\cap\{z\in\C\;|\;\tfrac{1}{n}<|\im(z)|< n,\; \re(z)>-n,\;\d(z,\C\cap\partial U)>\tfrac{1}{n}\} &,
 \; -\infty\not\in\Omega,\, \infty\in\Omega, \\
U\cap\{z\in\C\;|\;\tfrac{1}{n}<|\im(z)|< n,\; \re(z)<n,\;\d(z,\C\cap\partial U)>\tfrac{1}{n}\} &,
 \; -\infty\in\Omega,\, \infty\not\in\Omega,\\
U\cap\{z\in\C\;|\;\tfrac{1}{n}<|\im(z)|< n,\;\d(z,\C\cap\partial U)>\tfrac{1}{n}\} &,
 \; \pm\infty\in\Omega.
\end{cases}
\end{flalign*}
\begin{center}
\includegraphics[scale=0.85]{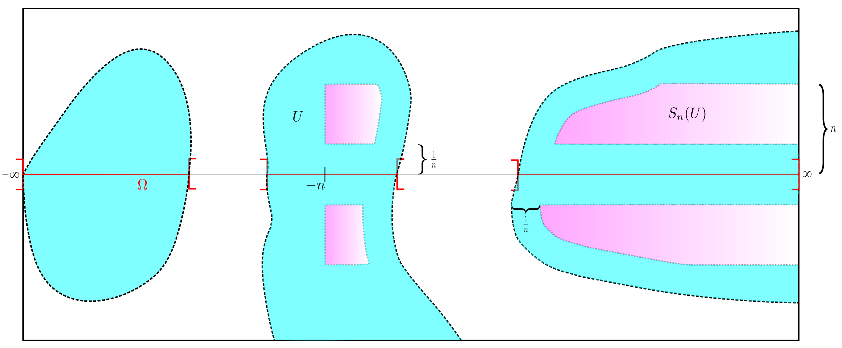}
\captionsetup{type=figure}
\caption{$S_{n}(U)$ for $\infty\in\Omega$, $-\infty\notin\Omega$}
\end{center}

If $\pm\infty\not\in\Omega$, we define
\[
\mathcal{O}^{exp}(U\setminus\overline{\R}, E)\coloneq \mathcal{O}((U\setminus\overline{\R})\cap\C, E).
\]

If $-\infty\in\Omega$ or $\infty\in\Omega$, we define
\[
\mathcal{O}^{exp}(U,E)\coloneq\{f\in\mathcal{O}(U\cap\C,E)\;|\;\forall\;n\in\N,\,n\geq 2,
                          \,\alpha\in\mathfrak{A}:\;\vertiii{f}_{U,n,\alpha}<\infty\}
\]
where
\[
\vertiii{f}_{U,n,\alpha}\coloneq\sup_{z\in T_{n}(U)}p_{\alpha}(f(z))\e^{-\frac{1}{n}|\re(z)|}
\]
and
\begin{align*}
&\hspace{0.35cm}T_{n}(U)\\
&\coloneq
\begin{cases}
U\cap\{z\in\C\;|\;|\im(z)|< n,\;\re(z)>-n,\;\d(z,\C\cap\partial U)>\tfrac{1}{n}\} &,\; -\infty\not\in\Omega,\, \infty\in\Omega, \\
U\cap\{z\in\C\;|\;|\im(z)|< n,\;\re(z)<n,\;\d(z,\C\cap\partial U)>\tfrac{1}{n}\} &,\; -\infty\in\Omega,\, \infty\not\in\Omega,\\
U\cap\{z\in\C\;|\;|\im(z)|< n,\;\d(z,\C\cap\partial U)>\tfrac{1}{n}\} &,\; \pm\infty\in\Omega.
\end{cases}
\end{align*}

If $\pm\infty\not\in\Omega$, we define
\[
\mathcal{O}^{exp}(U,E)\coloneq\mathcal{O}(U\cap\C,E).
\]

We remark that $\mathcal{O}^{exp}(U\setminus\overline{\R},E)$ and $\mathcal{O}^{exp}(U,E)$ for complete $E$ 
are complete $\C$-lcHs by \cite[Proposition 3.7, p.\ 240]{kruse2018_2} if $-\infty\in\Omega$ or $\infty\in\Omega$. 
If $\pm\infty\notin\Omega$, then this is obviously valid for the corresponding spaces as well if equipped 
with the topology of uniform convergence on compact subsets. Moreover, if $U=\overline{\C}$, so $\Omega=\overline{\R}$, 
then the definition of $\mathcal{O}^{exp}(\overline{\C}\setminus\overline{\R},E)$ and $\mathcal{O}^{exp}(\overline{\C},E)$ 
in the just introduced sense coincides with the one in the sense of \prettyref{def:smooth_weighted_space} 
(and therefore the spaces have the same symbol).

\begin{defn}\label{def:bv}
For an open set $\Omega\subset\overline{\R}$, $\Omega\neq\varnothing$, $U\in\mathcal{U}(\Omega)$ and an $\C$-lcHs $E$ 
we define the space of boundary values by
\[
bv(\Omega,E)\coloneq\mathcal{O}^{exp}(U\setminus\overline{\R},E)/\mathcal{O}^{exp}(U,E)
\]
and $bv(\varnothing,E)\coloneq 0$.
\end{defn}

The counterpart of the next observation in the context of vector-valued hyperfunctions can be found in \cite[Lemma 6.7, p.\ 1124]{D/L}.

\begin{lem}\label{lem:bv_indep_U}
Let $\Omega\subset\overline{\R}$ be non-empty and open and $E$ locally complete and admissible. 
The definition of $bv(\Omega,E)$ is independent 
of the choice of $U\in\mathcal{U}(\Omega)$ and for every $f\in bv(\Omega,E)$ 
there is $F\in\mathcal{O}^{exp}(\overline{\C}\setminus\overline{\Omega},E)$ such 
that $f=[F]$.
\end{lem}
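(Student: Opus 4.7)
The plan is to prove both claims at once by showing, for each $U \in \mathcal{U}(\Omega)$, that the natural restriction map
\[
\Phi_U\colon \mathcal{O}^{exp}(\overline{\C} \setminus \overline{\Omega}, E) \to \mathcal{O}^{exp}(U \setminus \overline{\R}, E)/\mathcal{O}^{exp}(U, E),\quad F \mapsto [F_{\mid U \setminus \overline{\R}}],
\]
is surjective via a Cousin-type $\overline{\partial}$-splitting, and that its kernel is independent of $U$. Surjectivity yields the representation statement, while the common quotient $\mathcal{O}^{exp}(\overline{\C} \setminus \overline{\Omega}, E)/\ker\Phi_U$ being independent of $U$ gives the $U$-independence of $bv(\Omega, E)$.

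For the splitting, consider the open cover of $\C \setminus (\partial\Omega \cap \R)$ by $V_1 := \C \setminus (\overline{\Omega} \cap \R)$ and $V_2 := U \cap \C$; note that $V_1 \cap V_2 = U \setminus \overline{\R}$ because $U \cap \R = \Omega \cap \R \subset \overline{\Omega} \cap \R$. Given $f \in \mathcal{O}^{exp}(U \setminus \overline{\R}, E)$, I choose a smooth partition of unity $\{\psi_1, \psi_2\}$ subordinate to $\{V_1, V_2\}$ with $\psi_2 = 1$ on a complex neighbourhood of $\Omega \cap \R$ (including strips around $\pm\infty$ whenever $\pm\infty \in \Omega$) and $\psi_2 = 0$ near $\partial U \cap \C$; such a cut-off is built along the lines of the proof of \prettyref{lem:I_isom_op} via \cite[Corollary 1.4.11, p.\ 31]{H1}, with derivative bounds analogous to \eqref{eq:cut-off_partial_dist}. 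Setting $F_1 := \psi_2 f$ (extended by $0$ to $V_1 \setminus V_2$) on $V_1$ and $h_1 := -\psi_1 f$ (extended by $0$ to $\Omega \cap \R$) on $V_2$ produces smooth functions with $F_1 - h_1 = f$ on $V_1 \cap V_2$, and the common defect
\[
g := \overline{\partial}F_1 = \overline{\partial}h_1 = (\overline{\partial}\psi_2) f
\]
glues to a smooth function on $\C \setminus (\partial\Omega \cap \R)$ supported at positive distance from both $\overline{\Omega} \cap \R$ and $\partial U \cap \C$. Estimates analogous to \eqref{eq:estimate_molified_f}--\eqref{eq:estimate_C(f)} show that $g \in \mathcal{E}^{exp}(\overline{\C} \setminus \partial\Omega, E)$, and admissibility of $E$ (applied with the compact set $\partial\Omega \subset \overline{\R}$) supplies $u \in \mathcal{E}^{exp}(\overline{\C} \setminus \partial\Omega, E)$ with $\overline{\partial}u = g$. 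Then $F := F_1 - u$ is holomorphic on $V_1$ and $h := h_1 - u$ is holomorphic on $V_2$, they satisfy $F - h = f$ on $V_1 \cap V_2$, and the support localisation of the cut-offs together with the $S_n(\partial\Omega) \supset S_n(\overline{\Omega})$ bounds on $u$ yield $F \in \mathcal{O}^{exp}(\overline{\C} \setminus \overline{\Omega}, E)$ and $h \in \mathcal{O}^{exp}(U, E)$. Hence $[f] = [F_{\mid U \setminus \overline{\R}}]$, proving surjectivity of $\Phi_U$ and the existence of $F$ as claimed.

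For the independence of $U$, I argue that $\ker \Phi_U$ depends only on $\Omega$: an $F \in \mathcal{O}^{exp}(\overline{\C} \setminus \overline{\Omega}, E)$ lies in $\ker \Phi_U$ iff $F_{\mid U \setminus \overline{\R}}$ extends to some $\widetilde{F} \in \mathcal{O}^{exp}(U, E)$, in which case analytic continuation glues $F$ and $\widetilde{F}$ to a holomorphic function on $\C \setminus (\partial\Omega \cap \R)$. This extended function is uniquely determined by $F$, independent of $U$, and its growth on $T_n(U')$ for any $U' \in \mathcal{U}(\Omega)$ can be re-derived from the $S_n(\overline{\Omega})$-bounds on $F$ by applying \prettyref{prop:Cauchy_estimates} on small disks contained in $\C \setminus (\partial\Omega \cap \R)$. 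Therefore $\ker\Phi_{U_1} = \ker\Phi_{U_2}$ for any $U_1, U_2 \in \mathcal{U}(\Omega)$, and $bv(\Omega, E)$ is canonically the common quotient.

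The hard part is the growth bookkeeping in both steps: arranging the partition of unity so that the support of $\overline{\partial}\psi_2$ is contained in a region where $f$ satisfies uniform $S_{n_0}(U)$-seminorm estimates (needed to obtain $g \in \mathcal{E}^{exp}(\overline{\C} \setminus \partial\Omega, E)$), and controlling the Cauchy-estimate-based extension in the independence step on the portions of $T_n(U)$ close to $\Omega \cap \R$ or to $\partial U \cap \C$. Once the geometric positioning of the cut-offs is set up correctly, these verifications run in direct parallel to those already carried out in \prettyref{lem:I_isom_op}.
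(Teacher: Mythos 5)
Your overall architecture coincides with the paper's: a Cousin-type cut-off decomposition whose $\overline{\partial}$-defect lands in $\mathcal{E}^{exp}(\overline{\C}\setminus\partial\Omega,E)$, admissibility to solve $\overline{\partial}u=g$, and a gluing/kernel argument for the independence of $U$ (the paper phrases the latter as injectivity of the restriction map from the largest $U_{1}=(\overline{\C}\setminus\overline{\R})\cup\Omega$, which is the same computation as your $\ker\Phi_{U}=\ker\Phi_{U'}$). However, there is a genuine gap in the surjectivity step, exactly in the cases where $\overline{\Omega}\setminus\Omega$ contains $-\infty$ or $\infty$. Your input $f$ lies only in $\mathcal{O}^{exp}(U\setminus\overline{\R},E)$, and when, say, $-\infty\in\partial\Omega$ the seminorms $\vertiii{f}_{U^{\ast},n,\alpha}$ are taken over $S_{n}(U)\subset\{\re(z)>-n\}$: the function $f$ carries \emph{no} growth control whatsoever as $\re(z)\to-\infty$. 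A partition of unity subordinate to $\{V_{1},V_{2}\}$ with $\psi_{2}=1$ near $\Omega\cap\R$ and $\psi_{2}=0$ near $\partial U\cap\C$ does not prevent $\overline{\partial}\psi_{2}$ from being nonzero at points $-R+\mathsf{i}y$ with $R$ arbitrarily large and $\tfrac{1}{n}<|y|<\varepsilon$; such points belong to $S_{n}(\partial\Omega)$ and to $S_{n}(\overline{\Omega})$, so both the estimate for $g=(\overline{\partial}\psi_{2})f$ and the estimate for $F=\psi_{2}f-u$ break down there, since $f$ is uncontrolled. Your stated fix --- arranging that $\operatorname{supp}\overline{\partial}\psi_{2}$ lies in a region with uniform $S_{n_{0}}(U)$-estimates --- is not achievable: near such an infinity the transition region of any admissible cut-off must leave every fixed $S_{k}(U)$.

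What is actually needed (and what the paper supplies) is a \emph{second} cut-off $\varphi_{1}$ whose support pinches exponentially onto the real axis near that infinity (built from the sets $\{|y|\leq e^{-|x|}\}$ and $\{|y|\geq 2e^{-|x|}\}$), so that the product $\varphi_{1}\varphi_{0}f$ vanishes identically on $\{|\im(z)|>\tfrac{1}{n},\ \re(z)<-\ln(2n)\}$; the uncontrolled far-left part of the support is thereby pushed into the strip $(-\infty,-n]\times[-\tfrac{1}{n},\tfrac{1}{n}]$, which is excluded from $S_{n}(\partial\Omega)$ and $S_{n}(\overline{\Omega})$ precisely because $-\infty\in\partial\Omega$. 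This is also why your claim that the bookkeeping ``runs in direct parallel to \prettyref{lem:I_isom_op}'' is misleading: there the function being cut off lies in $\mathcal{O}^{exp}(\overline{\C}\setminus\overline{\Omega}_{1},E)$ and has exponential bounds on all of $S_{2n}(\overline{\Omega}_{1})$, so one cut-off suffices; here it does not. The remaining steps (well-definedness of $\Phi_{U}$, the gluing argument for the kernel, the decomposition of the relevant sets into a relatively compact piece plus a piece inside some $S_{k}(U)$ or $T_{k}(U)$) are sound in outline, though the kernel step also needs that decomposition rather than a bare application of \prettyref{prop:Cauchy_estimates}, since the boundary circles of your small disks still meet the region near $\R$ where neither bound is a priori available.
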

\begin{proof}
Let $U,U_{1}\in\mathcal{U}(\Omega)$, w.l.o.g.\ $U_{1}\coloneq (\overline{\C}\setminus\overline{\R})\cup\Omega$. 
Then $U\subset U_{1}$. The canonical map
\[
J\colon \mathcal{O}^{exp}(U_{1}\setminus\overline{\R},E)/\mathcal{O}^{exp}(U_{1},E)
 \to\mathcal{O}^{exp}(U\setminus\overline{\R},E)/\mathcal{O}^{exp}(U, E),\;
 [f]\mapsto[f_{\mid(U\setminus\overline{\R})\cap\C}],
\]
is well-defined since $\mathcal{O}^{exp}(U_{1},E)\subset\mathcal{O}^{exp}(U,E)$.

Let $f\in\mathcal{O}^{exp}(U_{1}\setminus\overline{\R},E)$ with $[f_{\mid(U\setminus\overline{\R})\cap\C}]=0$, i.e.\ 
$f_{\mid(U\setminus\overline{\R})\cap\C}\in\mathcal{O}^{exp}(U,E)$. Then
\[
f\in\mathcal{O}^{exp}((U_{1}\setminus\overline{\R})\cup U,E)=\mathcal{O}^{exp}(U_{1},E)
\]
and therefore $[f]=0$, yielding the injectivity of $J$.

The proof of surjectivity resembles the one of \prettyref{lem:I_isom_op}, but it is sometimes necessary to use two cut-off functions. 
We restrict to the case that $\infty\in\Omega$ and $-\infty\in\partial\Omega$. 
For the similar treatment of the other cases we refer to the proof of \cite[6.8 Lemma, p.\ 118]{ich}.

(i) There are $\widetilde{x}_{0}\in\R$, w.l.o.g.\ $\widetilde{x}_{0}\geq 0$, and $\varepsilon_{0}>0$ 
such that $[\widetilde{x}_{0},\infty]\subset\Omega$ 
and $[\widetilde{x}_{0},\infty]\times[-\varepsilon_{0},\varepsilon_{0}]\subset U$ since $\infty\in\Omega$, 
$\Omega$ is open and $U\in\mathcal{U}(\Omega)$. We define the sets
\[
F_{0}\coloneq (U^{C}\cap\R^{2})\cup\bigr[\R\times\bigl(\R\setminus(-\tfrac{\varepsilon_{0}}{2},\tfrac{\varepsilon_{0}}{2})\bigr)\bigr]
\]
and
\[
F_{1}\coloneq (\R\cap\overline{\Omega})\cup([\widetilde{x}_{0}+2,\infty)\times[-\tfrac{\varepsilon_{0}}{4},\tfrac{\varepsilon_{0}}{4}]).
\]
The sets $F_{0}$ and $F_{1}$ are non-empty and closed in $\R^{2}$ and $F_{0}\cap F_{1}=\R\cap\partial\Omega$. 
By \cite[Corollary 1.4.11, p.\ 31]{H1} there exists 
$\varphi_{0}\in\mathcal{C}^{\infty}((F_{0}\cap F_{1})^{C})=\mathcal{C}^{\infty}(\R^{2}\setminus\partial\Omega)$, 
$0\leq\varphi_{0}\leq 1$, such that $\varphi_{0}=0$ on $V_{0}$ and $\varphi_{0}=1$ on $V_{1}$ 
where $V_{0}$, $V_{1}\subset\R^{2}$ are open and
\[
V_{0}\supset F_{0}\setminus(F_{0}\cap F_{1})=F_{0}\setminus\partial\Omega\supset(\R\setminus\overline{\Omega})
\]
and
\[
V_{1}\supset F_{1}\setminus(F_{0}\cap F_{1})=F_{1}\setminus\partial\Omega\supset(\R\cap\Omega)
\]
as well as
\begin{equation}\label{eq:estimate_molifier_phi0}
|\partial^{\beta}\varphi_{0}(z)|\leq C^{|\beta|}\frac{\d(z)^{-|\beta|}}{d_{1}\cdots d_{|\beta|}},
\quad z\in\R^{2}\setminus\partial\Omega,\,\beta\in\N^{2}_{0},
\end{equation}
where $C$, $\d$ and $(d_{n})$ with $d_{1}\cdots d_{|0|}\coloneq 1$ are like in part (i) of the proof of \prettyref{lem:I_isom_op}.
\begin{center}
\includegraphics[scale=0.8]{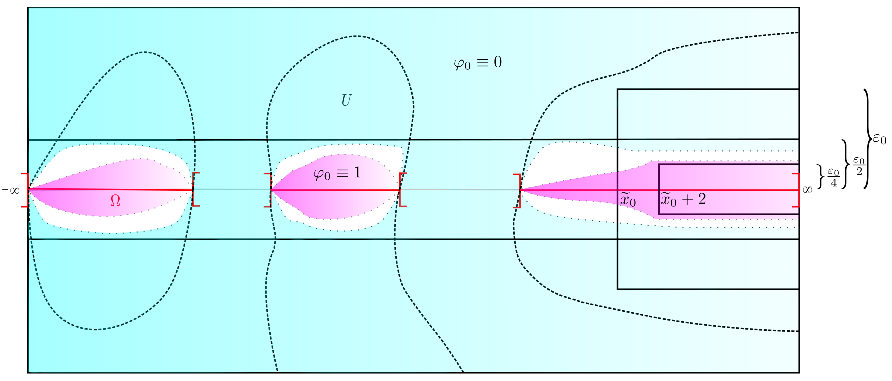}
\captionsetup{type=figure}
\caption{case: $\infty\in\Omega$, $-\infty\in\partial\Omega$}
\end{center}
Furthermore, we define the sets $K_{0}\coloneq\{(x,y)\in\R^{2}\;|\;y\leq -2\e^{-|x|}\;\text{or}\; y\geq 2\e^{-|x|}\}$ 
and $K_{1}\coloneq\{(x,y)\in\R^{2}\;|\; -\e^{-|x|}\leq y\leq \e^{-|x|}\}$ as well as
\[
\widetilde{F}_{0}\coloneq K_{0}\cup([0,\infty)\times[\R\setminus (-2,2)])
\quad\text{and}\quad
\widetilde{F}_{1}\coloneq K_{1}\cup([0,\infty)\times[-1,1]).
\]
The sets $\widetilde{F}_{0}$ and $\widetilde{F}_{1}$ are non-empty and closed in $\R^{2}$ 
and $\widetilde{F}_{0}\cap\widetilde{F}_{1}=\varnothing$. Like above there is 
$\varphi_{1}\in\mathcal{C}^{\infty}((\widetilde{F}_{0}\cap\widetilde{F}_{1})^{C})=\mathcal{C}^{\infty}(\R^{2})$, 
$0\leq\varphi_{1}\leq 1$, such that $\varphi_{1}=0$ on $W_{0}$ and $\varphi_{1}=1$ on $W_{1}$ 
where $W_{0}$, $W_{1}\subset\R^{2}$ are open and
\[
W_{0}\supset\widetilde{F}_{0}\setminus(\widetilde{F}_{0}\cap\widetilde{F}_{1})=\widetilde{F}_{0}
\quad\text{and}\quad
W_{1}\supset\widetilde{F}_{1}\setminus(\widetilde{F}_{0}\cap\widetilde{F}_{1})=\widetilde{F}_{1}
\]
as well as
\begin{equation}\label{eq:estimate_molifier_phi1}
|\partial^{\beta}\varphi_{1}(z)|\leq \widetilde{C}^{|\beta|}\frac{\widetilde{\d}(z)^{-|\beta|}}{d_{1}\cdots d_{|\beta|}},
\quad z\in\R^{2},\,\beta\in\N^{2}_{0},
\end{equation}
where $\widetilde{C}$, $\widetilde{\d}$ and $(d_{n})$ are like above. 
\begin{center}
\includegraphics[scale=0.8]{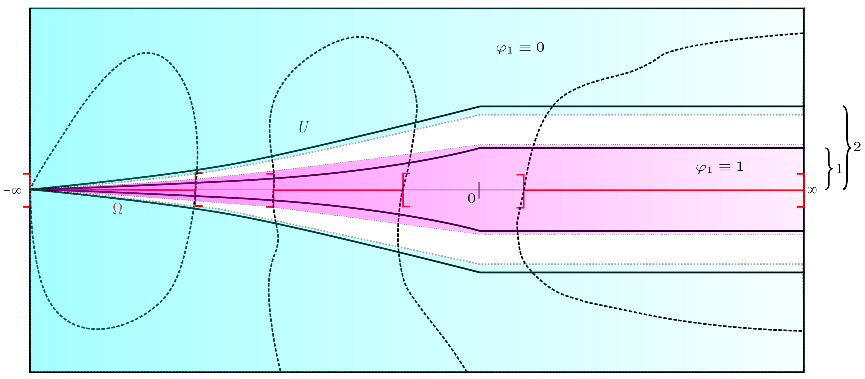}
\captionsetup{type=figure}
\caption{case: $\infty\in\Omega$, $-\infty\in\partial\Omega$}
\end{center}
Again, we take a closer look at the right-hand side of \eqref{eq:estimate_molifier_phi0} 
resp.\ \eqref{eq:estimate_molifier_phi1} and claim that 
\begin{equation}\label{eq:dist_d_estimate}
B\coloneq\inf_{z\in S_{n}(\partial\Omega)}\d(z)>0
\end{equation}
and
\begin{equation}\label{eq:dist_tilde_d_estimate} 
D\coloneq\inf_{z\in S_{n}(\partial\Omega)}\widetilde{\d}(z)>0
\end{equation}
for all $n\in\N$, $n\geq 2$. We begin with \eqref{eq:dist_d_estimate}.

(\ref{eq:dist_d_estimate}.1) For $z\in S_{n}(\partial\Omega)$ with $\re(z)\leq -n$ we have
\[
 \d(z)
=\max(\d(z,F_{0}),\d(z,F_{1}))\geq\d(z,F_{1})
\geq\min\bigl(n+\widetilde{x}+2-\frac{1}{n},\frac{1}{n}\bigr)\geq\frac{1}{n}.
\]
For $z\in S_{n}(\partial\Omega)$ with $\re(z)\geq\widetilde{x}_{0}+2$ we have
\begin{align*}
\d(z)&\geq
\left.\begin{cases}
 \frac{\varepsilon_{0}}{4} &,\;z\in F_{0},\\
 \min(\frac{1}{2}\frac{\varepsilon_{0}}{4},2) &,\;z\notin F_{0},\, z\notin F_{1},\\
 \min(\frac{\varepsilon_{0}}{4},2) &,\;z\in F_{1},
\end{cases}\right\}
\geq\min\bigl(\frac{\varepsilon_{0}}{8},2\bigr).
\end{align*}

(\ref{eq:dist_d_estimate}.2) For $z\in S_{n}(\partial\Omega)$ with $\re(z)\leq \widetilde{x}_{0}$ and $|\im(z)|\geq\tfrac{1}{n}$ we get
\[
\d(z)\geq\d(z,F_{1})\geq\min\bigl(2,\frac{1}{n}\bigr)=\frac{1}{n}.
\]

(\ref{eq:dist_d_estimate}.3) By \cite[Remark 6 (a), p.\ 87--88]{kruse2019_2} 
the set $U_{n}(\partial\Omega)$ has finitely many components $Z_{j}$, 
so there exists $k\in\N$ with $U_{n}(\partial\Omega)=\bigcup^{k}_{j=1}Z_{j}$. 
Since $-\infty\in\partial\Omega$ and $\infty\notin\partial\Omega$, all but one $Z_{j}$ are bounded. 
Denote by $Z_{1}$ the unbounded component and by $Z_{j}$, $2\leq j\leq k$, the bounded components if $k\geq 2$. 
Let $a_{j}\coloneq\min(Z_{j}\cap\partial\Omega)$ and $b_{j}\coloneq\max(Z_{j}\cap\partial\Omega)$, $2\leq j\leq k$, 
if $k\geq 2$. 
If $\max(Z_{1}\cap\partial\Omega)$ exists, we set $b_{1}\coloneq\max(Z_{1}\cap\partial\Omega)$, and 
if $\max(Z_{1}\cap\partial\Omega)$ does not exist, implying $Z_{1}=(-\infty,n]\times[-\tfrac{1}{n},\tfrac{1}{n}]$, 
then we set $b_{1}\coloneq -n-\tfrac{1}{n}$. We observe that $\max_{1\leq j\leq k}b_{j}<\widetilde{x}_{0}$. 

Let $k\geq 2$. W.l.o.g.\ $a_{j}<a_{j+1}$ for $2\leq j\leq k-1$ (otherwise renumber). 
Then we have $b_{k}=\max_{1\leq j\leq k}b_{j}$. Due to \cite[Remark 6 (b)(i), (iii), (v), p.\ 87--88]{kruse2019_2} 
there is $0<r_{j}<\tfrac{1}{n}$ such that $\{z\in\C\;|\;\d(z,(-\infty,b_{1}])\leq r_{1}\}\subset Z_{1}$ 
and $\{z\in\C\;|\;\d(z,[a_{j},b_{j}])\leq r_{j}\}\subset Z_{j}$ for all $2\leq j\leq k$ 
(see \prettyref{fig:path_gamma}). 
Since $k\geq 2$, we have $a_{1}\coloneq -n<b_{k}$. 
Let $z\in S_{n}(\partial\Omega)$ such that $a_{1}=-n<\re(z)<b_{k}$ and $|\im(z)|<\tfrac{1}{n}$.
If $a_{j}<b_{j}$ for some $2\leq j\leq k$, we obtain for $z$ with $a_{j}<\re(z)<b_{j}$
\[
\d(z)\geq\d(z,F_{1})\geq r_{j}.
\]
Now, we consider $z\in S_{n}(\partial\Omega)$ with $b_{j}<\re(z)<a_{j+1}$ for $1\leq j\leq k-1$ and $|\im(z)|<\tfrac{1}{n}$. 
If $\d(z)\leq\tfrac{1}{2n}$, 
we have with $N_{0}\coloneq \{w\in\C\;|\;|\im(w)|>\tfrac{3}{2n}\}$ and 
$N_{1}\coloneq\{w\in\C\;|\;\d(\R\cap\partial\Omega)<\tfrac{1}{3n}\}$ that
\[
\d(z,F_{1})=\d(z,F_{1}\setminus N_{1})=\d(z,\underbrace{([b_{j},a_{j+1}]\cap\overline{\Omega})\setminus N_{1}}_{\eqqcolon K_{1,j}})
\]
and
\begin{align*}
  \d(z,F_{0})
&=\d(z,F_{0}\setminus(N_{0}\cup N_{1}))\\
&=\d(z,\underbrace{(F_{0}\cap\{w\in\C\;|\;b_{j}\leq\re(w)\leq a_{j+1}\})
     \setminus(N_{0}\cup N_{1})}_{\eqqcolon K_{0,j}})
\end{align*}
because $\tfrac{1}{n}-\tfrac{1}{3n}>\tfrac{1}{2n}$ and $\tfrac{1}{n}+\tfrac{1}{2n}=\tfrac{3}{2n}$.
The sets $K_{0,j}$ and $K_{1,j}$ are bounded closed sets in $\R^{2}$, thus compact, and disjoint. 
Hence $c_{j}\coloneq\d(K_{0,j},K_{1,j})>0$, yielding to
\[
\d(z)=\max_{i\in\{0,1\}}\d(z,K_{i,j})\geq\frac{c_{j}}{2}>0
\]
for all $1\leq j\leq k-1$. Combining these results, we obtain
\[
\d(z)\geq\min\bigl(\min_{1\leq j\leq k,a_{j}\neq b_{j}}r_{j},\min_{1\leq j\leq k-1}\frac{c_{j}}{2}, \frac{1}{2n}\bigr)>0
\]
for $z\in S_{n}(\partial\Omega)$ with $|\im(z)|\leq\tfrac{1}{n}$ and $a_{1}<\re(z)<b_{k}$ with $k\geq 2$.

(\ref{eq:dist_d_estimate}.4) Let $k\in\N$, $z\in S_{n}(\partial\Omega)$ such that $b_{k}\leq\re(z)<\widetilde{x}_{0}+2$.
If $\d(z)\leq\tfrac{1}{2n}$, we get with $N_{2}\coloneq\{w\in\C\;|\;\re(w)>\widetilde{x}_{0}+2+\tfrac{1}{2n}\}$ and 
$N_{3}\coloneq\{w\in\C\;|\;|\im(w)|>n+\tfrac{1}{2n}\;\text{or}\;\re(w)<b_{k}\}$ that
\begin{align*}
  \d(z,F_{1})
&=\d(z,F_{1}\setminus(\D_{\tfrac{1}{3n}}(b_{k})\cup N_{2}))\\
&=\d(z,\underbrace{\{w\in F_{1}\;|\; \re(w)\geq b_{k}\}\setminus(\D_{\tfrac{1}{3n}}(b_{k})\cup N_{2})}_{\eqqcolon \widetilde{K}_{1}})
\end{align*}
as well as
\[
\d(z,F_{0})=\d(z,\underbrace{F_{0}\setminus(\D_{\tfrac{1}{3n}}(b_{k})\cup N_{2}\cup N_{3})}_{\eqqcolon \widetilde{K}_{0}}).
\]
$\widetilde{K}_{0}$ and $\widetilde{K}_{1}$ are compact and disjoint. Thus we have $c_{0}\coloneq\d(\widetilde{K}_{0},\widetilde{K}_{1})>0$, 
implying
\[
\d(z)=\max_{i\in\{0,1\}}\d(z,\widetilde{K}_{i})\geq\frac{c_{0}}{2}>0.
\]

(\ref{eq:dist_d_estimate}.5) Merging (\ref{eq:dist_d_estimate}.1)-(\ref{eq:dist_d_estimate}.4), we gain
\[
B=\inf_{z\in S_{n}(\partial\Omega)}\d(z)
 \geq\min\bigl(\frac{1}{n},\min\bigl(\frac{\varepsilon_{0}}{8},2\bigr),\min\bigl(\min_{j\in J}r_{j},
               \min_{1\leq j\leq k-1}\frac{c_{j}}{2},\frac{1}{2n}\bigr),\frac{c_{0}}{2}\bigr)>0
\]
if $k\geq 2$ and $J\coloneq\{j\in\N\;|\;j\leq k,\,a_{j}<b_{j}\}\neq\varnothing$. 
If $J=\varnothing$ resp.\ $k=1$, then the $\min_{j\in J}$-term resp.\ the $\min_{1\leq j\leq k-1}$-term 
does not appear in the estimate above.
 
Let us turn to \eqref{eq:dist_tilde_d_estimate}. 

(\ref{eq:dist_tilde_d_estimate}.1) For $z\in S_{n}(\partial\Omega)$ with $\re(z)\geq 1$ we have
\begin{align*}
  \widetilde{\d}(z)
&=\max(\d(z,\widetilde{F}_{0}),\d(z,\widetilde{F}_{1}))
 \geq
 \left.\begin{cases}
	1 &,\;z\in \widetilde{F}_{0},\\
	\min(\frac{1}{2},1) &,\;z\notin\widetilde{F}_{0},\,z\notin\widetilde{F}_{1},\\
	\min(1,1) &,\;z\in\widetilde{F}_{1},
	\end{cases}\right\}
 \geq\frac{1}{2}.
\end{align*}	
	
(\ref{eq:dist_tilde_d_estimate}.2) Let $z\in S_{n}(\partial\Omega)$ such that $0\leq\re(z)<1$. 
If $\widetilde{\d}(z)\leq\tfrac{1}{2n}$, then
\[
\d(z,\widetilde{F}_{0})=\d(z,\widetilde{F}_{0}\setminus (N_{0}\cup N_{1}))\quad\text{and}\quad
\d(z,\widetilde{F}_{1})=\d(z,\widetilde{F}_{1}\setminus  N_{1})
\]
where $N_{0}\coloneq\{w\in\C\;|\;|\im(w)|>n+\tfrac{1}{2n}\}$ and 
$N_{1}\coloneq\{w\in\C\;|\;\re(w)<-\tfrac{1}{n}\;\text{or}\;\re(w)>1+\tfrac{1}{n}\}$. 
The sets $\widetilde{F}_{0}\setminus(N_{0}\cup N_{1})$ and $\widetilde{F}_{1}\setminus N_{1}$ are compact and disjoint, 
thus we gain $c_{0}\coloneq\d(\widetilde{F}_{0}\setminus(N_{0}\cup N_{1}),\widetilde{F}_{1}\setminus N_{1})>0$ 
and therefore
\[
\widetilde{\d}(z)\geq\frac{c_{0}}{2}>0.
\]

(\ref{eq:dist_tilde_d_estimate}.3) Let $z\in S_{n}(\partial\Omega)$ with $ \re(z)<0$. 
If $\widetilde{\d}(z)\leq\tfrac{1}{2n}$, then
\[
\d(z,\widetilde{F}_{0})=\d(z,\widetilde{F}_{0}\setminus(N_{0}\cup N_{2}))\quad\text{and}\quad
\d(z,\widetilde{F}_{1})=\d(z,\widetilde{F}_{1}\setminus  N_{2})
\]
with $N_{0}$ from (\ref{eq:dist_tilde_d_estimate}.2) and 
$N_{2}\coloneq\{w\in\C\;|\;(|\im(w)|<\tfrac{1}{3n}\;\text{and}\;\re(w)<-n-\tfrac{1}{2n})\;\text{or}\;\re(w)>\tfrac{1}{n}\}$. 
The sets $\widetilde{F}_{0}\setminus(N_{0}\cup N_{2})$ and $\widetilde{F}_{1}\setminus N_{2}$ are compact and disjoint, 
so we obtain $c_{1}\coloneq\d(\widetilde{F}_{0}\setminus(N_{0}\cup N_{2}),\widetilde{F}_{1}\setminus N_{2})>0$ 
and hence $\widetilde{\d}(z)\geq\tfrac{c_{1}}{2}>0$.

(\ref{eq:dist_tilde_d_estimate}.4) By combining these results, we have
\[
D=\inf_{z\in S_{n}(\partial\Omega)}\widetilde{\d}(z)\geq\min\bigl(\frac{1}{2},\frac{1}{2n},\frac{c_{0}}{2},\frac{c_{1}}{2}\bigr)>0.
\]

(ii) Let $f\in\mathcal{O}^{exp}(U\setminus\overline{\R},E)$. By the choice of $\varphi_{0}$ and $\varphi_{1}$ 
the function $\overline{\partial}(\varphi_{1}\varphi_{0}f)$ may be regarded as an element of 
$\mathcal{C}^{\infty}(\R^{2}\setminus\partial\Omega,E)$ by $\mathcal{C}^{\infty}$-extension via 
$\overline{\partial}(\varphi_{1}\varphi_{0}f)\coloneq 0$ on $[(U^{C}\cap\R^{2})\cup\R]\setminus\partial\Omega$. 
Moreover, with the definition
\[
V\coloneq (V_{0}\cup W_{0})\cup(V_{1}\cap W_{1}),
\]
the equation
\[
\overline{\partial}(\varphi_{1}\varphi_{0} f)(z)=
\begin{cases}
0 &,\;z\in V,\\
[(\overline{\partial}\varphi_{1})\varphi_{0}f+(\overline{\partial}\varphi_{0})\varphi_{1}f](z) &,\;\text{else},
\end{cases}
\]
is valid.

The next step is similar to \eqref{eq:estimate_molified_f}. Let $n\in\N$, $n\geq 2$, $m\in\N_{0}$ and $\alpha\in\mathfrak{A}$.
We define the set $S(n)\coloneq S_{n}(\partial\Omega)\setminus V$ and the cardinality $C_{m}\coloneq |\{\gamma\in\N^{2}_{0}\;|\;|\gamma|\leq m\}|$.
By applying the Leibniz rule twice, we obtain 
\begin{flalign}\label{eq:estimate_CR_phi_0_1_f}
&\hspace{0.35cm}|\overline{\partial}(\varphi_{1}\varphi_{0}f)|_{\partial\Omega,n,m,\alpha}\nonumber\\
&=\sup_{\substack{z\in S_{n}(\partial\Omega)\\\beta\in\N^{2}_{0},|\beta|\leq m}}
  p_{\alpha}(\partial^{\beta}\overline{\partial}(\varphi_{1}\varphi_{0}  f)(z))\e^{-\frac{1}{n}|\re(z)|}\nonumber\\
&\underset{\mathclap{\eqref{eq:real.compl.part.deriv.1}}}{\leq}(m!)^{2}
 \hspace{-0.2cm}\sup_{\substack{z\in S(n)\\ \beta\in\N^{2}_{0},|\beta|\leq m}}
 \sum_{\gamma\leq\beta}|\partial^{\beta-\gamma}[(\overline{\partial}\varphi_{1})\varphi_{0}
 +(\overline{\partial}\varphi_{0})\varphi_{1}](z)
 |\underbrace{\sup_{\substack{z\in S(n)\\ \beta\in\N^{2}_{0},|\beta|\leq m}}
 p_{\alpha}(\partial_{\C}^{|\beta|}f(z))\e^{-\frac{1}{n}|\re(z)|}}_{\eqqcolon C(f)}\nonumber\\
&\leq(m!)^{4}C(f)\hspace{-0.2cm}\sup_{\substack{z\in S(n)\\ \beta\in\N^{2}_{0},|\beta|\leq m}}
 \sum_{\gamma\leq \beta}\sum_{\tau\leq\beta-\gamma}|\partial^{\tau}(\overline{\partial}\varphi_{1})(z)
 \partial^{\beta-\gamma-\tau}\varphi_{0}(z)
 +\partial^{\tau}(\overline{\partial}\varphi_{0})(z)\partial^{\beta-\gamma-\tau}\varphi_{1}(z)|\nonumber\\
&\leq(m!)^{4}C(f)\sum_{\substack{|\gamma|\leq m\\|\tau|\leq m+1}}\sup_{z\in S(n)}|\partial^{\tau}\varphi_{1}(z)|\;
 \sup_{\mathclap{\substack{z\in S(n)\\ \upsilon\in\N^{2}_{0},|\upsilon|\leq m}}}|\partial^{\upsilon}\varphi_{0}(z)|
 +\sup_{z\in S(n)}|\partial^{\tau}\varphi_{0}(z)|\;
  \sup_{\mathclap{\substack{z\in S(n)\\ \upsilon\in\N^{2}_{0},|\upsilon|\leq m}}}|\partial^{\upsilon}\varphi_{1}(z)|\nonumber\\
&\;\underset{\mathclap{\substack{\eqref{eq:estimate_molifier_phi0},\\\eqref{eq:estimate_molifier_phi1}}}}{\leq}\;(m!)^{4}C_{m}C(f)
 \sum_{|\tau|\leq m+1}\;\widetilde{C}^{|\tau|}\sup_{z\in S(n)}\frac{\widetilde{\d}(z)^{-|\tau|}}{d_{1}\cdots d_{|\tau|}}\;\;
 \sup_{\mathclap{\substack{z\in S(n)\\ \upsilon\in\N^{2}_{0},|\upsilon|\leq m}}}C^{|\upsilon|}
 \frac{\d(z)^{-|\upsilon|}}{d_{1}\cdots d_{|\upsilon|}}\nonumber\\
&\phantom{\;\underset{\mathclap{\substack{\eqref{eq:estimate_molifier_phi0},\\\eqref{eq:estimate_molifier_phi1}}}}{\leq}\;}
 +C^{|\tau|}\sup_{z\in S(n)}\frac{\d(z)^{-|\tau|}}{d_{1}\cdots d_{|\tau|}}\;\;
 \sup_{\mathclap{\substack{z\in S(n)\\ \upsilon\in\N^{2}_{0},|\upsilon|\leq m}}}
 \widetilde{C}^{|\upsilon|}\frac{\widetilde{\d}(z)^{-|\upsilon|}}{d_{1}\cdots d_{|\upsilon|}}\nonumber\\
&\;\;\;\underset{\mathclap{\substack{\eqref{eq:dist_d_estimate},\\\eqref{eq:dist_tilde_d_estimate}}}}{\leq}\;(m!)^{4}C_{m}
 \frac{[\max(C,\widetilde{C},1)]^{m+1}}{(d_{1}\cdots d_{m+1})^{2}}C(f)\sum_{|\tau|\leq m+1}D^{-|\tau|}
 \sup_{\substack{\upsilon\in\N^{2}_{0}\\ |\upsilon|\leq m}}B^{-|\upsilon|}+B^{-|\tau|}
 \sup_{\substack{\upsilon\in\N^{2}_{0}\\ |\upsilon|\leq m}}D^{-|\upsilon|}.
\end{flalign}
Now, we have to take a closer look at $C(f)$. First of all, we remark that
\begin{align*}
  [(U^{C}\cup\overline{\R})\cap\R^{2}]
&=([(U^{C}\cap\R^{2})\cup(\R\cap\overline{\Omega})]\setminus\partial\Omega)\cup(\partial\Omega\cap\R)\\
&\subset\bigl[(V_{0}\cup W_{0})\cup(V_{1}\cap W_{1})\cup\bigcup_{x\in\R\cap\partial\Omega}\D_{\tfrac{1}{n}}(x)\bigr]\\
&=V\cup\bigcup_{x\in\R\cap\partial\Omega}\D_{\tfrac{1}{n}}(x)\eqqcolon W.
\end{align*}
$W$ is an open set in $\R^{2}$ as the union of open sets and we get
\begin{equation}\label{eq:M_0_inclusion}
\overline{S(n)}=\overline{[S_{n}(\partial\Omega)\setminus V]}\subset\overline{W^{C}}=W^{C}
\subset[(U\setminus\overline{\R})\cap\R^{2}].
\end{equation}
In the following we prove that there are $k\in\N$, $k\geq 2$, $M_{0}\subset S(n)$ bounded and $M_{1}\subset S_{k}(U)$ 
such that
\[
S(n)\subset(M_{0}\cup M_{1}).
\]
As $|\im(z)|\leq\tfrac{1}{n}$ for every $z\in S(n)$, it suffices to prove that there is $C_{1}>0$ 
such that $|\re(z)|\leq C_{1}$ for every $z\in M_{0}$. 
We define the set $M\coloneq\{z\in\C\;|\; \re(z)>\widetilde{x}_{0}+2\}$
and decompose
\[
S(n)
=\underbrace{[S(n)\setminus M]}_{\eqqcolon M_{0}}\cup\underbrace{[S(n)\cap M]}_{\eqqcolon M_{1}}.
\]
\begin{center}
\includegraphics[scale=0.8]{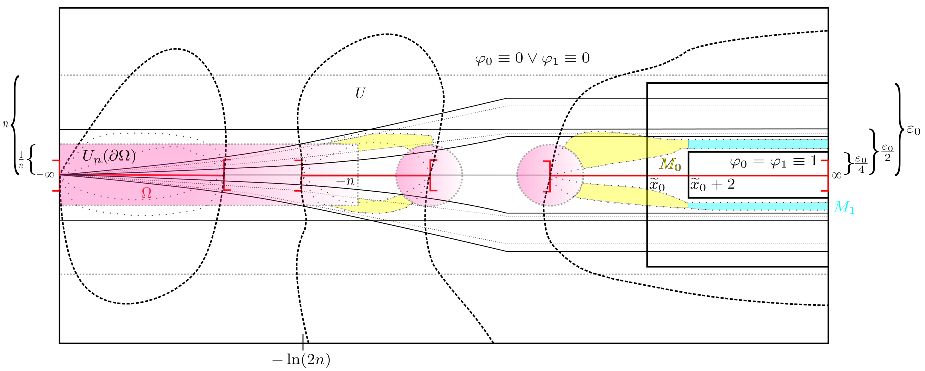}
\captionsetup{type=figure}
\caption{case: $\infty\in\Omega$, $-\infty\in\partial\Omega$}
\end{center}
We observe that the inequality $\tfrac{1}{n}\geq 2e^{-|x|}$ is equivalent to $\ln(2n)\leq|x|$ for all $x\in\R$. 
Hence $M_{0}$ is bounded since
\[
|\re(z)|\leq\max(|-n|,|-\ln(2n)|,|\widetilde{x}_{0}+2|)
\]
for all $z\in M_{0}$. Let 
\[
r\coloneq\frac{1}{2}\min\bigl(2,\frac{\varepsilon_{0}}{2},\frac{\varepsilon_{0}}{4}\bigr)=\min\bigl(1,\frac{\varepsilon_{0}}{8}\bigr),
\]
choose $k\in\N$ with $k>\max(n,\varepsilon_{0})$ and $\frac{1}{k}<\tfrac{\varepsilon_{0}}{8}$ and $-k<\widetilde{x}_{0}$.
Then
\[
\frac{\varepsilon_{0}}{8}\leq\frac{\varepsilon_{0}}{4}-r\quad\text{and}\quad
\frac{1}{k}<\min\bigl(\frac{1}{n},\frac{\varepsilon_{0}}{8}\bigr)
\]
is valid and thus we have for all $z\in M_{1}$
\begin{align}\label{eq:disc_in_S_k}
 \overline{\D_{r}(z)}
&\subset\{w\in\C\;|\;\d(w,M_{1})\leq r\}\nonumber\\
&\subset([\widetilde{x}_{0}+2-r,\infty)\times[-\tfrac{\varepsilon_{0}}{2}-r,\tfrac{\varepsilon_{0}}{2}+r])
         \setminus\{w\in\C\;|\;|\im(w)|<\tfrac{\varepsilon_{0}}{4}-r\}\nonumber\\
&\subset([\widetilde{x}_{0}+1,\infty)\times[-\tfrac{5\varepsilon_{0}}{8},\tfrac{5\varepsilon_{0}}{8}])
         \setminus\{w\in\C\;|\;|\im(w)|<\tfrac{\varepsilon_{0}}{8}\}\subset S_{k}(U).
\end{align}
From \prettyref{prop:Cauchy_estimates} follows that
\begin{align*}
\sup_{\substack{z\in M_{1}\\ \beta\in\N^{2}_{0},|\beta|\leq m}}
 p_{\alpha}(\partial_{\C}^{|\beta|}f(z))\e^{-\frac{1}{n}|\re(z)|}
&\leq \e^{\frac{r}{n}}\frac{m!}{r^{m}}\sup_{\zeta\in S_{k}(U)}p_{\alpha}(f(\zeta))\e^{-\frac{1}{k}|\re(\zeta)|}\\
&=\e^{\frac{r}{n}}\frac{m!}{r^{m}}\vertiii{f}_{U^{\ast},k,\alpha}.
\end{align*}
Since $\overline{M}_{0}\subset[(U\setminus\overline{\R})]\cap\R^{2}$ by \eqref{eq:M_0_inclusion}, $\overline{M}_{0}$ is compact 
and $f\in\mathcal{O}^{exp}(U\setminus\overline{\R},E)$, we have
\[
C(f)\leq\sup_{\substack{z\in \overline{M}_{0}\\ \beta\in\N^{2}_{0},|\beta|\leq m}}
p_{\alpha}(\partial_{\C}^{|\beta|}f(z))\e^{-\frac{1}{n}|\re(z)|}+\e^{\frac{r}{n}}\frac{m!}{r^{m}}\vertiii{f}_{U^{\ast},k,\alpha}<\infty.
\]
Due to \eqref{eq:estimate_CR_phi_0_1_f} this implies that
$|\overline{\partial}(\varphi_{1}\varphi_{0} f)|_{\partial\Omega,n,m,\alpha}<\infty$ 
for all $n\in\N$, $n\geq 2$, $m\in\N_{0}$ and $\alpha\in\mathfrak{A}$ and thus 
$\overline{\partial}(\varphi_{1}\varphi_{0} f)\in\mathcal{E}^{exp}(\overline{\C}\setminus \partial\Omega,E)$. 
As $E$ is admissible, there exists $g\in\mathcal{E}^{exp}(\overline{\C}\setminus\partial\Omega,E)$ such that
\begin{equation}\label{eq:CR_solution}
\overline{\partial}g=\overline{\partial}(\varphi_{1}\varphi_{0} f).
\end{equation}

(iii) We set $F\coloneq\varphi_{1}\varphi_{0} f-g$. The next step is to show that 
$F\in\mathcal{O}^{exp}(\overline{\C}\setminus\overline{\Omega},E)$, 
which implies $F\in\mathcal{O}^{exp}(U_{1}\setminus\overline{\R},E)$ since 
$\mathcal{O}^{exp}(\overline{\C}\setminus \overline{\Omega},E)\subset\mathcal{O}^{exp}(U_{1}\setminus\overline{\R},E)$, 
and that $f-F\in\mathcal{O}^{exp}(U,E)$.
$F$ is defined on $\C\setminus\overline{\Omega}$ (by setting $\varphi_{1}\varphi_{0}f\coloneq 0$ on 
$[(U^{C}\cup\overline{\Omega})\setminus\partial\Omega]\cap\C$) and can be regarded as an element of 
$\mathcal{O}(\C\setminus\overline{\Omega},E)$ due to \eqref{eq:CR_solution}.
Let $n\in\N$, $n\geq 2$. We set $V\coloneq V_{0}\cup W_{0}$, $S(n)\coloneq S_{n}(\overline{\Omega})\setminus V$ 
and remark that $S_{n}(\overline{\Omega})\subset S_{n}(\partial\Omega)$.
For $\alpha\in\mathfrak{A}$ we have by the choice of $\varphi_{i}$, $i=1,2$,
\begin{align}\label{eq:estimate_F_clos_Omega}
  |F|_{\overline{\Omega},n,\alpha}
&=\sup_{z\in S_{n}(\overline{\Omega})}p_{\alpha}(F(z)\big)\e^{-\frac{1}{n}|\re(z)|}\nonumber\\
&\leq \underbrace{\sup_{z\in S_{n}(\partial\Omega)}p_{\alpha}(g(z))\e^{-\frac{1}{n}|\re(z)|}}_{=|g|_{\partial\Omega,n,0,\alpha}}
 +\sup_{z\in S_{n}(\overline{\Omega})}p_{\alpha}(\varphi_{1}\varphi_{0}f(z))\e^{-\frac{1}{n}|\re(z)|}\nonumber\\
&=|g|_{\partial\Omega,n,0,\alpha}+\sup_{z\in S(n)}\underbrace{|(\varphi_{1}\varphi_{0})(z)|}_{\leq 1}
 p_{\alpha}(f(z))\e^{-\frac{1}{n}|\re(z)|}\nonumber\\
&\leq |g|_{\partial\Omega,n,0,\alpha}+\sup_{z\in S(n)}p_{\alpha}(f(z))\e^{-\frac{1}{n}|\re(z)|}.
\end{align}
First, we observe that
\[
(U^{C}\cup\overline{\R})\cap\C\subset\bigl[V_{0}\cup\bigcup_{x\in \overline{\Omega}\cap\R}\D_{\tfrac{1}{n}}(x)\bigr]\eqqcolon W.
\]
$W\subset\C$ is open and so we get by the definition of the set $S(n)$ that
\[
\overline{S(n)}\subset\overline{W^{C}}=W^{C}\subset(U\setminus\overline{\R})\cap\C.
\]
Again, we claim that there are $M_{0}\subset S(n)$ bounded, $k\in\N$, $k\geq 2$, 
and $M_{1}\subset S_{k}(U)$ such that $S(n)=M_{0}\cup M_{1}$. 
For the boundedness we just have to prove that there is $C_{1}>0$ such that $|\re(z)|\leq C_{1}$ for every $z\in M_{0}$. 
We choose $k\in\N$ such that $k>n$, $\tfrac{1}{k}<\tfrac{\varepsilon_{0}}{2}<k$ and $-k<\widetilde{x}_{0}+2$.
Then we decompose the set $S(n)$ as follows
\[
S(n)=\underbrace{[S(n)\setminus S_{k}(U)]}_{\eqqcolon M_{0}}\cup\underbrace{[S(n)\cap S_{k}(U)]}_{\eqqcolon  M_{1}}.
\]
\begin{center}
\includegraphics[scale=0.8]{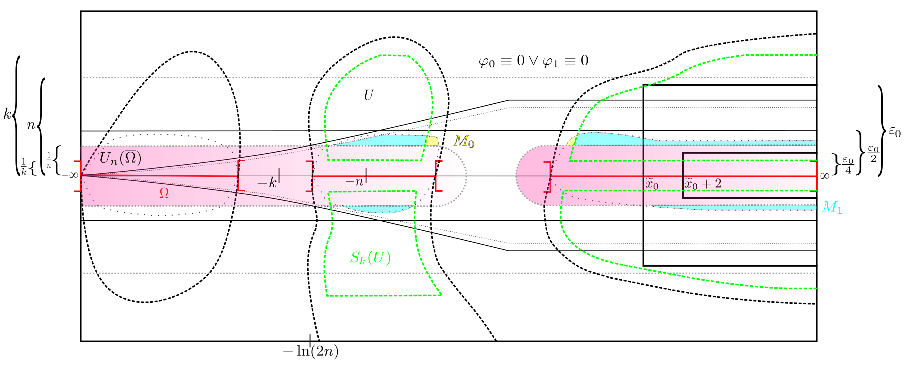}
\captionsetup{type=figure}
\caption{case: $\infty\in\Omega$, $-\infty\in\partial\Omega$}
\end{center}
Obviously $M_{1}\subset S_{k}(U)$ and $\overline{M}_{0}\subset\overline{S(n)}\subset(U\setminus\overline{\R})\cap\C$.
By the choice of $V_{0}$ we have
\begin{equation}\label{eq:inlcusion_M0_im}
M_{0}=[S(n)\setminus S_{k}(U)]\subset(S_{n}(\overline{\Omega})\setminus V_{0})
\subset\bigl\{z\in\C\;|\;|\im(z)|<\frac{\varepsilon_{0}}{2}\bigr\}
\end{equation}
and by the choice of $W_{0}$ 
\begin{equation}\label{eq:inlcusion_M0_re}
M_{0}\subset(S_{n}(\overline{\Omega})\setminus W_{0})\subset\{z\in\C\;|\;\re(z)>\min(-n,-\ln(2n))\}.
\end{equation}
Let $z\in S(n)$ with $|\im(z)|<\frac{\varepsilon_{0}}{2}$ and $\re(z)\geq\widetilde{x}_{0}+2$. Then
\[
z\in\bigl([\widetilde{x}_{0}+2,\infty)\times\bigl[-\frac{\varepsilon_{0}}{2},\frac{\varepsilon_{0}}{2}\bigr]\bigr)
\subset([\widetilde{x}_{0},\infty)\times[-\varepsilon_{0},\varepsilon_{0}])\subset U
\]
and therefore
\[
\d(z,\C\cap\partial U)\geq\min\bigl(2,\frac{\varepsilon_{0}}{2}\bigr)>\frac{1}{k}
\]
by the choice of $k$. Furthermore,
\[
k>n>|\im(z)|>\frac{1}{n}>\frac{1}{k}
\]
as $[\widetilde{x}_{0},\infty]\subset \Omega$ and due to the choice of $k$. 
In addition, $\re(z)\geq \widetilde{x}_{0}+2>-k$ and $z\in U$ by the choice of $k$ and since $z\in S(n)\subset U$. 
Hence we obtain $z\in S_{k}(U)$. So it follows from \eqref{eq:inlcusion_M0_im} that
\[
M_{0}=[S(n)\setminus S_{k}(U)]\subset\{z\in\C\;|\;\re(z)<\widetilde{x}_{0}+2\}
\]
and due to \eqref{eq:inlcusion_M0_re} we gain the claim with $C_{1}\coloneq\max(n,\ln(2n),|\widetilde{x}_{0}+2|)$.
By the same arguments as in part (ii) we get $\sup_{z\in S(n)} p_{\alpha}(f(z))e^{-\frac{1}{n}|\re(z)|}<\infty$ 
and by \eqref{eq:estimate_F_clos_Omega} that $F\in \mathcal{O}^{exp}(\overline{\C}\setminus \overline{\Omega},E)$.

(iv) $f-F$ is defined on $U\cap\C$ (by the setting in the beginning of part (iii)) 
and can be regarded as an element of $\mathcal{O}(U\cap\C,E)$ due to \eqref{eq:CR_solution}. 
Let $n\in\N$, $n\geq 2$. We set $V\coloneq V_{1}\cap W_{1}$ and  $T(n)\coloneq T_{n}(U)\setminus V$. 
With $R\coloneq\{z\in\C\;|\;\re(z)\leq -n\}$ we have
\[
       [\overline{U_{n}(\partial\Omega)}\cup\{z\in\C\;|\;|\im(z)|\geq n\}]
\subset\bigl[ R\cup\{z\in\C\;|\;|\im(z)|\geq n\}\cup\bigcup_{x\in\C\cap\partial U}\overline{\D_{\tfrac{1}{n}}(x)}\bigr]
\eqqcolon \widetilde{R}
\]
and thus
\begin{equation}\label{eq:T_nU}
       T_{n}(U)
\subset[(U\cap\C)\setminus\widetilde{R}]
\subset(\C\setminus[\overline{U_{n}(\partial\Omega)}\cup\{z\in\C\;|\;|\im(z)|\geq n\}])
=S_{n}(\partial\Omega).
\end{equation}
For $\alpha\in\mathfrak{A}$ we have by the choice of $\varphi_{i}$, $i=1,2$,
\begin{align}\label{eq:estimate_f_minus_F}
  \vertiii{f-F}_{U,n,\alpha}
&=\sup_{z\in T_{n}(U)}p_{\alpha}([(1-\varphi_{1}\varphi_{0})f+g](z))\e^{-\frac{1}{n}|\re(z)|}\nonumber\\
&\underset{\mathclap{\eqref{eq:T_nU}}}{\leq}\; 
 \underbrace{\sup_{z\in S_{n}(\partial\Omega)}p_{\alpha}(g(z))\e^{-\frac{1}{n}|\re(z)|}}_{=|g|_{\partial\Omega,n,0,\alpha}}
 +\sup_{z\in T_{n}(U)}p_{\alpha}((1-\varphi_{1}\varphi_{0})f(z))\e^{-\frac{1}{n}|\re(z)|}\nonumber\\
&=|g|_{\partial\Omega,n,0,\alpha}+\sup_{z\in T(n)}\underbrace{|1-(\varphi_{1}\varphi_{0})(z)|}_{\leq 1}
 p_{\alpha}(f(z))\e^{-\frac{1}{n}|\re(z)|}\nonumber\\
&\leq |g|_{\partial\Omega,n,0,\alpha}+\sup_{z\in T(n)}p_{\alpha}(f(z))\e^{-\frac{1}{n}|\re(z)|}.
\end{align}
We choose $k\in\N$ such that $\frac{1}{k}<\min(\tfrac{1}{n},\tfrac{\varepsilon_{0}}{4})$.
First, we observe that
\[
(U^{C}\cup\overline{\R})\cap\C \subset \bigl[ V\cup \bigcup_{x\in U^{C}\cap\C}\D_{\tfrac{1}{n}}(x)\bigr] \eqqcolon W.
\]
The set $W\subset\C$ is open and thus we get by the definition of the set $T(n)$
\[
 T(n)
=T_{n}(U)\setminus V
=\underbrace{\bigr[T_{n}(U)\setminus\bigl(\bigcup_{x\in U^{C}\cap\C}\D_{\tfrac{1}{n}}(x)\bigr)\bigr]}_{=T_{n}(U)}\setminus V
\subset W^{C}
\]
and so
\begin{equation}\label{eq:Tn_without_Sk}
\overline{T(n)\setminus S_{k}(U)}\subset \overline{T(n)}
\subset\overline{W^{C}}=W^{C}\subset(U\setminus\overline{\R})\cap\C.
\end{equation}
Then we can decompose the set $T(n)$ in the following manner
\[
T(n)=\underbrace{[T(n)\setminus S_{k}(U)]}_{\eqqcolon  M_{0}}\cup\underbrace{[T(n)\cap S_{k}(U)]}_{\eqqcolon M_{1}}.
\]
\begin{center}
\includegraphics[scale=0.8]{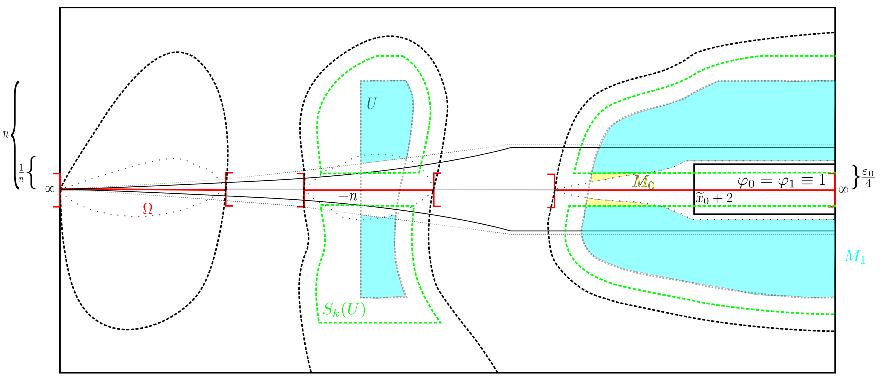}
\captionsetup{type=figure}
\caption{case: $\infty\in\Omega$, $-\infty\in\partial\Omega$}
\end{center}
We claim that the set $M_{0}$ is bounded. Again, we just have to prove that there is $C_{1}>0$ 
such that $|\re(z)|\leq C_{1}$ for every $z\in M_{0}$. 
By the choice of $k$ and the definition of $V_{1}$ and $W_{1}$ we have
$\re(z)\in[-n,\max(0,\widetilde{x}_{0}+2)]$ for every $z\in M_{0}$, proving the claim. 
Therefore, $\overline{M}_{0}$ is compact and by \eqref{eq:Tn_without_Sk} 
we get $\overline{M}_{0}\subset(U\setminus\overline{\R})\cap\C$. 
Then
\begin{align*}
 \sup_{z\in T(n)}p_{\alpha}(f(z))\e^{-\frac{1}{n}|\re(z)|}
&\leq \sup_{z\in \overline{M}_{0}}p_{\alpha}(f(z))\e^{-\frac{1}{n}|\re(z)|}
 +\sup_{z\in M_{1}}p_{\alpha}(f(z))\e^{-\frac{1}{n}|\re(z)|}\\
&\leq \sup_{z\in \overline{M}_{0}}p_{\alpha}(f(z))\e^{-\frac{1}{n}|\re(z)|}+\vertiii{f}_{U^{\ast},k,\alpha}<\infty
\end{align*}
for all $n\in\N$, $n\geq 2$, and $\alpha\in\mathfrak{A}$ since $f\in\mathcal{O}^{exp}(U\setminus\overline{\R},E)$. 
Hence we obtain by \eqref{eq:estimate_f_minus_F} that $f-F\in\mathcal{O}^{exp}(U,E)$.

So we have found 
$F\in\mathcal{O}^{exp}(\overline{\C}\setminus \overline{\Omega},E)\subset\mathcal{O}^{exp}(U_{1}\setminus\overline{\R},E)$ 
such that $[F_{\mid(U\setminus\overline{\R})\cap\C}]=[f]$, proving the surjectivity of $J$. 
For arbitrary $U$, $U_{0}\in\mathcal{U}(\Omega)$ we have, with $U_{1}$ from the proof,
\begin{align*}
      \mathcal{O}^{exp}(U\setminus\overline{\R},E)/\mathcal{O}^{exp}(U,E)
&\cong\mathcal{O}^{exp}(U_{1}\setminus\overline{\R},E)/\mathcal{O}^{exp}(U_{1},E)\\
&\cong\mathcal{O}^{exp}(U_{0}\setminus\overline{\R},E)/\mathcal{O}^{exp}(U_{0},E)
\end{align*}
algebraically, yielding the general statement.
\end{proof}

By virtue of \prettyref{lem:bv_indep_U} we may define restrictions in $bv(\Omega,E)$ in the following manner.

\begin{defn}\label{def:restrictions_bv}
Let $\Omega,\Omega_{1}\subset\overline{\R}$, $\Omega_{1}\subset\Omega$, be open and $E$ locally complete and admissible. 
For $\Omega_{1}\neq\varnothing$ let $[f]\in bv(\Omega,E)=\mathcal{O}^{exp}(U\setminus\overline{\R},E)/\mathcal{O}^{exp}(U,E)$ 
where $U\in\mathcal{U}(\Omega)$. Setting $U_{1}\coloneq U\cap(\Omega_{1}\times\R)$, we may define the restriction maps by
\[
R_{\Omega,\Omega_{1}}([f])\coloneq [f]_{\mid\Omega_{1}}\coloneq [f_{\mid(U_{1}\setminus\overline{\R})\cap\C}]
\in\mathcal{O}^{exp}(U_{1}\setminus\overline{\R},E)/\mathcal{O}^{exp}(U_{1},E)=bv(\Omega_{1},E).
\]
In addition, we define for an open set $\Omega\subset\overline{\R}$
\[
R_{\Omega,\varnothing}\colon bv(\Omega,E)\to bv(\varnothing,E),\; R_{\Omega,\varnothing}([f])\coloneq [f]_{\mid\varnothing}\coloneq 0.
\]
We denote the family $\{bv(\Omega,E)\;|\;\Omega\subset\overline{\R}\;\text{open}\}$ by $bv(E)$.
\end{defn}

\begin{thm}\label{thm:sheaf_flabby}
Let $E$ be locally complete and strictly admissible.
\begin{enumerate}
	\item [a)] $bv(E)$, equipped with the restrictions of \prettyref{def:restrictions_bv}, is a sheaf on $\overline{\R}$.
	\item [b)] $bv(E)$ is flabby, i.e.\ $R_{\overline{\R},\Omega}$ is surjective for any open $\Omega\subset\overline{\R}$.
	\item [c)] If $E$ is sequentially complete, then $bv(E)$ is isomorphic to $\mathcal{R}(E)=\mathcal{R}_{\overline{\R}}(E)$. 
	In particular, $\mathcal{R}(E)$ is a sheaf in this case.
\end{enumerate}
\end{thm}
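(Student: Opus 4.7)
The plan is to first establish part (c) via the Silva-K\"othe-Grothendieck duality of \prettyref{thm:duality}, then to verify the sheaf axioms directly for the more geometric description $bv(E)$ in (a), and finally to deduce (b) and to transfer the sheaf property back to $\mathcal{R}(E)$ via \prettyref{prop:sheaf_presheaf_isom}. In this way both parts benefit from their most natural picture: the duality for the algebraic isomorphism, and the boundary-value picture for gluing.

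For part (c), I define $h_\Omega\colon bv(\Omega,E)\to\mathcal{R}(\Omega,E)$ as follows. Given $[f]\in bv(\Omega,E)$, \prettyref{lem:bv_indep_U} provides a representative $F\in\mathcal{O}^{exp}(\overline{\C}\setminus\overline{\Omega},E)$, and I set $h_\Omega([f]):=[H_{\overline{\Omega}}([F])]$ in the quotient $\mathcal{R}(\Omega,E)=L(\mathcal{P}_{\ast}(\overline{\Omega}),E)/L(\mathcal{P}_{\ast}(\partial\Omega),E)$. Two such representatives $F,F'$ define the same element of $bv(\Omega,E)$ precisely when $F-F'$ extends holomorphically across $\Omega$, which via the duality \prettyref{thm:duality} and the compatibility \eqref{eq:unabh.H} is equivalent to $H_{\overline{\Omega}}([F-F'])\in L(\mathcal{P}_{\ast}(\partial\Omega),E)$; this yields both well-definedness and injectivity of $h_\Omega$. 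Surjectivity follows directly from \prettyref{thm:duality} and \prettyref{lem:bv_indep_U}, while compatibility with the restriction maps is a mechanical check combining \eqref{eq:unabh.H} with \prettyref{def:restrictions_op_sheaf} and \prettyref{def:restrictions_bv}.

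For part (a) the axiom $(S1)$ for $bv(E)$ is inherited from the sheaf property of holomorphic functions: the local extensions of a representative across the sets $\omega_j$ agree on overlaps by uniqueness of holomorphic continuation and thus glue to a global extension. The genuine content is $(S2)$, which I intend to handle by a weighted \v{C}ech--Dolbeault argument. Starting from compatible representatives $F_j\in\mathcal{O}^{exp}(U_j\setminus\overline{\R},E)$, the differences $F_i-F_j$ admit holomorphic extensions $\widetilde{G}_{ij}\in\mathcal{O}^{exp}(U_i\cap U_j,E)$ across $\omega_i\cap\omega_j$ which form a \v{C}ech cocycle. Choosing a smooth partition of unity $(\rho_j)$ on $U\cap\C$ subordinate to $(U_j\cap\C)$, the function $H_j:=\sum_i\rho_i\widetilde{G}_{ij}$ satisfies $H_j-H_i=\widetilde{G}_{ij}=F_i-F_j$, so the $F_j+H_j$ glue to a smooth $\widetilde{F}$ on $(U\setminus\overline{\R})\cap\C$ whose Cauchy-Riemann image $\alpha:=\overline{\partial}\widetilde{F}=\sum_i(\overline{\partial}\rho_i)\widetilde{G}_{ij}$ extends smoothly across $\overline{\R}\cap U$ to $\mathcal{C}^{\infty}(U\cap\C,E)$. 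Strict admissibility then supplies $\beta\in\mathcal{C}^{\infty}(U\cap\C,E)$ with $\overline{\partial}\beta=\alpha$, and $F:=\widetilde{F}-\beta$ is holomorphic on $(U\setminus\overline{\R})\cap\C$ with $F-F_j=H_j-\beta$ extending holomorphically across $\omega_j$, giving $[F]_{\mid\omega_j}=[F_j]$ for every $j$.

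Part (b) then follows almost formally: by \prettyref{lem:bv_indep_U} any $[f]\in bv(\Omega,E)$ has a representative $F\in\mathcal{O}^{exp}(\overline{\C}\setminus\overline{\Omega},E)$, whose restriction to $\overline{\C}\setminus\overline{\R}$ lies in $\mathcal{O}^{exp}(\overline{\C}\setminus\overline{\R},E)$ since $\overline{\Omega}\subset\overline{\R}$, so the corresponding class in $bv(\overline{\R},E)$ restricts to $[f]$ under \prettyref{def:restrictions_bv}. Applying \prettyref{prop:sheaf_presheaf_isom} to $h^{-1}\colon\mathcal{R}(E)\to bv(E)$ then yields that $\mathcal{R}(E)$ is a sheaf and $h$ is an isomorphism of sheaves. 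The hard part of the whole argument is the growth bookkeeping in the \v{C}ech--Dolbeault step for $(S2)$: the $\overline{\partial}$-surjectivity in the definition of strict admissibility acts on the unweighted space $\mathcal{C}^{\infty}(U\cap\C,E)$, so the $\beta$ it produces does not automatically satisfy the exponential estimates needed for $F$ to belong to $\mathcal{O}^{exp}(U\setminus\overline{\R},E)$ when $\pm\infty\in\Omega$. I plan to resolve this by replacing the local representatives $F_j$ with the larger-support ones $F_j\in\mathcal{O}^{exp}(\overline{\C}\setminus\overline{\omega_j},E)$ of \prettyref{lem:bv_indep_U} so that the cocycle $\widetilde{G}_{ij}$ already carries the right exponential decay, and then solving the resulting $\overline{\partial}$-equation for $\beta$ in the weighted class $\mathcal{E}^{exp}(\overline{\C}\setminus\partial\Omega,E)$ through the admissibility half of strict admissibility in place of its unweighted half.
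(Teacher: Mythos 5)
Your overall architecture coincides with the paper's: part (c) is the duality isomorphism built from \prettyref{thm:duality}, \eqref{eq:unabh.H}, \eqref{eq:unabh.H_inv} and \prettyref{lem:bv_indep_U} (you run it from $bv$ to $\mathcal{R}$, the paper from $\mathcal{R}$ to $bv$, which is immaterial); part (b) is the same one-line consequence of \prettyref{lem:bv_indep_U}; and the transfer of the sheaf property to $\mathcal{R}(E)$ via \prettyref{prop:sheaf_presheaf_isom} is exactly the paper's final step. Your \v{C}ech--Dolbeault scheme for $(S2)$ is also the paper's scheme in the case $\pm\infty\notin\Omega$, where it is literally H\"ormander's Theorem 1.4.5 with strict admissibility supplying the $\overline{\partial}$-surjectivity on $\mathcal{C}^{\infty}(U\cap\C,E)$.

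The gap is in the weighted case $\pm\infty\in\Omega$, which is the actual content of the theorem, and your closing sentence does not close it. To invoke admissibility you need $\overline{\partial}\widetilde F=\sum_i(\overline{\partial}\rho_i)\widetilde G_{ij}$ to lie in $\mathcal{E}^{exp}(\overline{\C}\setminus\partial\Omega,E)$, i.e.\ to satisfy exponential bounds of \emph{all} derivatives on the sets $S_n(\partial\Omega)$, which reach out to $\re(z)\to\pm\infty$ and come within distance $\tfrac{1}{n}$ of the real axis. For an arbitrary (typically infinite) cover, a partition of unity subordinate to $(U_j\cap\C)_j$ carries no uniform bounds of the form $|\partial^{\beta}\rho_i(z)|\leq C^{|\beta|}$ on these sets, and arranging exponential decay of the cocycle $\widetilde G_{ij}$ does not help, because the loss comes from the derivatives of the $\rho_i$, not from the $\widetilde G_{ij}$. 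The paper's resolution is structural rather than a sharper estimate: it first runs the unweighted argument to produce a single $F$ with $[F]_{\mid\Omega_j\cap\R}=[f_j]_{\mid\Omega_j\cap\R}$ for all $j$, replaces it by $\widetilde F\in\mathcal{O}^{exp}(\overline{\C}\setminus\overline{\Omega},E)$ via \prettyref{lem:bv_indep_U}, and then corrects only near $\pm\infty$ using exactly two cut-off functions $\varphi_0,\varphi_1$ attached to one chart $U_{j_0}\ni-\infty$ and one chart $U_{j_1}\ni\infty$; these are built from H\"ormander's Theorem 1.4.1/Corollary 1.4.11 so that $|\partial^{\beta}\varphi_i|\leq C_{i,\beta}\widetilde{\varepsilon}^{\,-|\beta|}$ globally, which is what puts $\overline{\partial}(\varphi_0(f_{j_0}-\widetilde F)+\varphi_1(f_{j_1}-\widetilde F))$ into $\mathcal{E}^{exp}(\overline{\C},E)$ and lets admissibility finish. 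Without this reduction to finitely many controlled cut-offs, your weighted \v{C}ech--Dolbeault step does not go through as stated.
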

\begin{proof}
a)(i) For open $\Omega\subset\overline{\R}$ the map $R_{\Omega,\Omega}$ can be regarded 
as $\id_{bv(\Omega,E)}$ by \prettyref{lem:bv_indep_U}. 
Let $\Omega_{3}\subset\Omega_{2}\subset\Omega_{1}\subset\overline{\R}$ be open. 
We have to prove that $R_{\Omega_{2},\Omega_{3}}\circ R_{\Omega_{1},\Omega_{2}}=R_{\Omega_{1},\Omega_{3}}$ holds. 
This is obviously true if one of the sets is empty, so let them all be non-empty. 
Let $[f]\in bv(\Omega_{1},E)=\mathcal{O}^{exp}(U_{1}\setminus\overline{\R},E)/\mathcal{O}^{exp}(U_{1},E)$ 
where $U_{1}\in\mathcal{U}(\Omega_{1})$. With $U_{2}\coloneq U_{1}\cap(\Omega_{2}\times\R)$ and
\begin{equation}\label{eq:composition_restr}	 
  U_{3}
\coloneq U_{2}\cap(\Omega_{3}\times\R)=[U_{1}\cap(\Omega_{2}\times\R)]\cap(\Omega_{3}\times\R)
\underset{\Omega_{3}\subset\Omega_{2}}{=}U_{1}\cap(\Omega_{3}\times\R)
\end{equation}
we get
\[
 R_{\Omega_{2},\Omega_{3}}\circ R_{\Omega_{1},\Omega_{2}}([f])
=R_{\Omega_{2},\Omega_{3}}([f_{\mid(U_{2}\setminus \overline{\R})\cap\C}])
\underset{U_{3}\subset U_{2}}{=}[f_{\mid(U_{3}\setminus \overline{\R})\cap\C}]
\underset{\eqref{eq:composition_restr}}{=}R_{\Omega_{1},\Omega_{3}}([f]).
\]

(ii) $(S1)$: Let $\{\Omega_{j}\subset\overline{\R}\;|\;j\in J\}$ be a family of open sets and 
$\Omega\coloneq\bigcup_{j\in J} \Omega_{j}$. Let $[f]\in bv(\Omega,E)=\mathcal{O}^{exp}(U\setminus\overline{\R},E)/\mathcal{O}^{exp}(U,E)$, 
where $U\in\mathcal{U}(\Omega)$, such that $R_{\Omega,\Omega_{j}}([f])=0$ for all $j\in J$. 
The assumption $R_{\Omega,\Omega_{j}}([f])=0$ is equivalent to $f\in\mathcal{O}^{exp}(U_{j},E)$ for every $j\in J$ 
where $U_{j}\coloneq U\cap(\Omega_{j}\times\R)$. Thus we obtain
\[
 f\in\mathcal{O}^{exp}([U\setminus\overline{\R}]\cup\bigcup_{j\in J}\Omega_{j}, E)
=\mathcal{O}^{exp}([U\setminus\overline{\R}]\cup\Omega,E)
\underset{U\in\mathcal{U}(\Omega)}{=}\mathcal{O}^{exp}(U,E)
\]
and hence $[f]=0$.

(iii) $(S2)$: Let $(\Omega_{j})_{j\in J}$ and $\Omega$ be like in part (ii). 
Let $[f_{j}]\in bv(\Omega_{j},E)=\mathcal{O}^{exp}(U_{j}\setminus\overline{\R},E)/\mathcal{O}^{exp}(U_{j},E)$, 
where $U_{j}\in\mathcal{U}(\Omega_{j})$, such that $[f_{j}]_{\mid\Omega_{j}\cap\Omega_{k}}=[f_{k}]_{\mid\Omega_{j}\cap\Omega_{k}}$. 
Hence we have, using that $bv(\Omega_{j}\cap\Omega_{k},E)$ does not depend on the choice of the open neighbourhood in $\C$ of 
$\Omega_{j}\cap\Omega_{k}$ by \prettyref{lem:bv_indep_U}, that 
\[
g_{jk}\coloneq {f_{j}}_{\mid[(U_{j}\cap U_{k})\setminus\overline{\R}]\cap\C}-{f_{k}}_{\mid[(U_{j}\cap U_{k})\setminus\overline{\R}]\cap\C}
\in\mathcal{O}^{exp}(U_{j}\cap U_{k},E)
\]
and $g_{jk}=-g_{kj}$ as well as $g_{jk}+g_{kl}+g_{lj}=0$ on $U_{j}\cap U_{k}\cap U_{l}$ by a simple calculation.

(iii.1) If $\pm\infty\notin\Omega$ and thus $\pm\infty\notin\Omega_{j}$, then exactly like in \cite[Theorem 1.4.5, p.\ 13]{H3}, 
where one uses that $E$ is \emph{strictly} admissible instead of \cite[Theorem 1.4.4, p.\ 12]{H3}, 
there are $g_{j}\in\mathcal{O}(U_{j}\cap\C,E)$ such that $g_{jk}=g_{k}-g_{j}$ on $U_{j}\cap U_{k}\cap\C$ 
(here the adjunct \emph{strictly} is needed). The setting $F_{j}\coloneq f_{j}+g_{j}$ defines a function 
$F\in\mathcal{O}((U\setminus\overline{\R})\cap\C,E)=\mathcal{O}^{exp}(U\setminus\overline{\R},E)$ 
since
\[
F_{j}-F_{k}=f_{j}+g_{j}-f_{k}-g_{k}=f_{j}-f_{k}+g_{j}-g_{k}=g_{jk}-g_{jk}=0
\]
on $U_{j}\cap U_{k}\cap\C$ such that $[F]_{\mid\Omega_{j}}=[f_{j}]$ for any $j\in J$.

(iii.2) Now, let $-\infty\in\Omega$ or $\infty\in\Omega$, i.e.\ there exists $j\in J$ 
such that $-\infty\in\Omega_{j}$ or $\infty\in\Omega_{j}$. 
We only consider the case that there are $j_{0},j_{1}\in J$ such that $-\infty\in\Omega_{j_{0}}$ and $\infty\in\Omega_{j_{1}}$. 
For the other two cases the proof is analogous. Then there are $x_{0},x_{1}\in\R$ and $\varepsilon_{0},\varepsilon_{1}>0$ 
such that $[-\infty,x_{0}]\times[-\varepsilon_{0},\varepsilon_{0}]\subset U_{j_{0}}$ and 
$[x_{1},\infty]\times[-\varepsilon_{1},\varepsilon_{1}]\subset U_{j_{1}}$. 
Now, let $x\coloneq\max(|x_{0}|,|x_{1}|)$ and $\varepsilon\coloneq\min(\varepsilon_{0},\varepsilon_{1})$. 
We define the sets 
\[
G_{0}\coloneq [(-\infty,-x-1)\times(-\tfrac{\varepsilon}{2},\tfrac{\varepsilon}{2})]^{C},\quad
H_{0}\coloneq (-\infty,-x-2]\times[-\tfrac{\varepsilon}{4},\tfrac{\varepsilon}{4}]
\] 
as well as 
\[
G_{1}\coloneq [(x+1,\infty)\times(-\tfrac{\varepsilon}{2},\tfrac{\varepsilon}{2})]^{C},\quad
H_{1}\coloneq [x+2,\infty)\times[-\tfrac{\varepsilon}{4},\tfrac{\varepsilon}{4}].
\]
By the proof of \cite[Theorem 1.4.1, p.\ 25]{H1} there are $\varphi_{i}\in\mathcal{C}^{\infty}(\R^{2})$, $i=0,1$, 
such that $0\leq\varphi_{i}\leq 1$ and $\varphi_{i}=0$ near $G_{i}$ plus $\varphi_{i}=1$ near $H_{i}$ 
as well as $|\partial^{\beta}\varphi_{i}|\leq C_{i,\beta} \widetilde{\varepsilon}^{\;-|\beta|}$ for all $\beta\in\N^{2}_{0}$ 
where $\widetilde{\varepsilon}\coloneq\frac{1}{4}\min(\tfrac{\varepsilon}{4},1)$ and $C_{i,\beta}>0$.
\begin{center}
\includegraphics[scale=0.8]{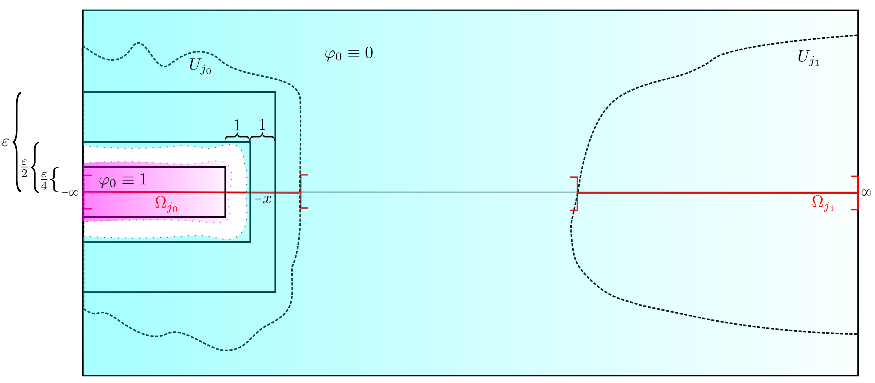}
\captionsetup{type=figure}
\caption{case: $-\infty\in\Omega_{j_{0}}$, $\infty\in\Omega_{j_{1}}$}
\end{center}
\begin{center}
\includegraphics[scale=0.8]{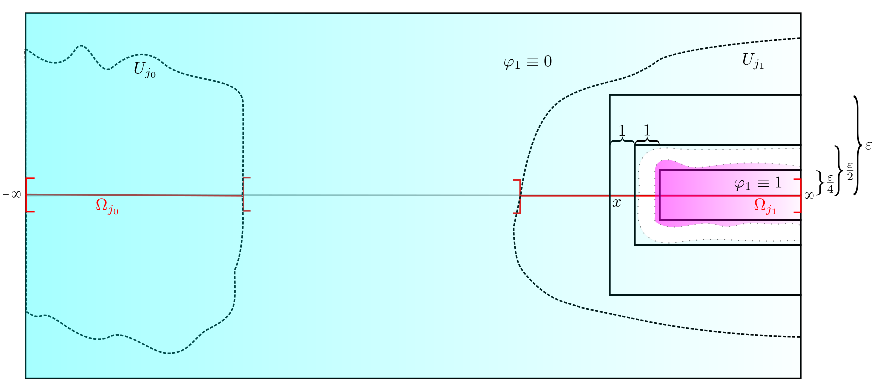}
\captionsetup{type=figure}
\caption{case: $-\infty\in\Omega_{j_{0}}$, $\infty\in\Omega_{j_{1}}$}
\end{center}
Due to case (iii.1) there is $F\in\mathcal{O}((U\setminus\overline{\R})\cap\C,E)$ such that 
$[F]_{\mid\Omega_{j}\cap\R}=[f_{j}]_{\mid\Omega_{j}\cap\R}$ for every $j\in J$. 
By \prettyref{lem:bv_indep_U} there exists $\widetilde{F}\in\mathcal{O}^{exp}(\overline{\C}\setminus\overline{\Omega},E)$ 
with $F-\widetilde{F}\in\mathcal{O}(U\cap\C,E)$. Thus we obtain
\begin{equation}\label{eq:f_j_minus_tilde_F_holom}
f_{j}-\widetilde{F}=\underbrace{(f_{j}-F)}_{\in\mathcal{O}(U_{j}\cap\C,E)}+\underbrace{(F-\widetilde{F})}_{\in\mathcal{O}(U\cap\C,E)}
\in\mathcal{O}(U_{j}\cap\C,E)
\end{equation}
for all $j\in J$. So by the choice of $\varphi_{i}$ we can regard 
$\overline{\partial}(\varphi_{0}(f_{j_{0}}-\widetilde{F})+\varphi_{1}(f_{j_{1}}-\widetilde{F}))$ 
as an element of $\mathcal{C}^{\infty}(\R^{2},E)$ (set $\varphi_{i}(f_{j_{i}}-\widetilde{F})\coloneq 0$ outside $U_{j_{i}}$). 
Let $n\in\N$, $n\geq 2$, $m\in\N_{0}$ and $\alpha\in\mathfrak{A}$. 
Then we obtain by applying the Leibniz rule and the choice of $\varphi_{i}$ like in \eqref{eq:estimate_molified_f} 
resp.\ \eqref{eq:estimate_CR_phi_0_1_f}
\begin{flalign}\label{eq:estimate_molified_S2}
&\hspace{0.35cm}
 |\overline{\partial}(\varphi_{0}(f_{j_{0}}-\widetilde{F})+\varphi_{1}(f_{j_{1}}-\widetilde{F}))|_{\varnothing,n,m,\alpha}\nonumber\\
&=\sup_{\substack{z\in S_{n}(\varnothing)\\ \beta\in\N^{2}_{0},|\beta|\leq m}}
 p_{\alpha}(\partial^{\beta}\overline{\partial}(\varphi_{0}(f_{j_{0}}-\widetilde{F})+\varphi_{1}(f_{j_{1}}-\widetilde{F}))(z))
 \e^{-\frac{1}{n}|\re(z)|}\nonumber\\
&\leq(m!)^{2}\sum_{i=0,1}\sup_{\substack{z\in S_{n}(\varnothing)\setminus(G_{i}\cup H_{i})\\ \beta\in\N^{2}_{0},|\beta|\leq m}}
 \sum_{\gamma\leq\beta}|\partial^{\beta-\gamma}(\overline{\partial}\varphi_{i})(z)|
 p_{\alpha}(\partial^{\gamma}(f_{j_{i}}-\widetilde{F})(z))
 \e^{-\frac{1}{n}|\re(z)|}\nonumber\\
&\underset{\mathclap{\eqref{eq:real.compl.part.deriv.1}}}{\leq} (m!)^{2}\hspace{-0.25cm}
 \sum_{\substack{i=0,1\\|\gamma|\leq m+1}}\underbrace{\sup_{z\in S_{n}(\varnothing)\setminus(G_{i}\cup H_{i})}
 |\partial^{\gamma}\varphi_{i}(z)|}_{\leq C_{i,\gamma}\widetilde{\varepsilon}^{\,-|\gamma|}}
 \underbrace{\sup_{\substack{z\in S_{n}(\varnothing)\setminus(G_{i}\cup H_{i})\\ \beta\in\N^{2}_{0},|\beta|\leq m}}
 p_{\alpha}(\partial_{\C}^{|\beta|}(f_{j_{i}}-\widetilde{F})(z))\e^{-\frac{1}{n}|\re(z)|}}_{\eqqcolon C(f_{j_{i}}-\widetilde{F})}\nonumber\\
&\leq(m!)^{2}(C(f_{j_{0}}-\widetilde{F})+C(f_{j_{1}}-\widetilde{F}))
 \sum_{|\gamma|\leq m+1}(C_{0,\gamma}+C_{1,\gamma})\widetilde{\varepsilon}^{\;-|\gamma|}.
\end{flalign}
Now, we have to take a closer look at $C(f_{j_{i}}-\widetilde{F})$. By the choice of the sets $G_{i}$ and $H_{i}$
\begin{align*}
 S_{n}(\varnothing)\setminus(G_{0}\cup H_{0})
&\subset\phantom{\cup}\underbrace{\{z\in\C\;|\;|\im(z)|<\tfrac{\varepsilon}{2},\;-x-2\leq\re(z)<-x-1\}}_{\eqqcolon N_{0}}\\
&\phantom{\subset}\cup\underbrace{\{z\in\C\;|\;\tfrac{\varepsilon}{4}<|\im(z)|<\tfrac{\varepsilon}{2},\;\re(z)<-x-2\}}_{\eqqcolon M_{0}}
\end{align*}
and
\begin{align*}
 S_{n}(\varnothing)\setminus(G_{1}\cup H_{1})
&\subset\phantom{\cup}\underbrace{\{z\in\C\;|\;|\im(z)|<\tfrac{\varepsilon}{2},\;x+1<\re(z)\leq x+2\}}_{\eqqcolon N_{1}}\\
&\phantom{\subset}\cup\underbrace{\{z\in\C\;|\;\tfrac{\varepsilon}{4}<|\im(z)|<\tfrac{\varepsilon}{2},\;\re(z)>x+2\}}_{\eqqcolon  M_{1}}
\end{align*}
is valid. 
\begin{center}
\includegraphics[scale=0.8]{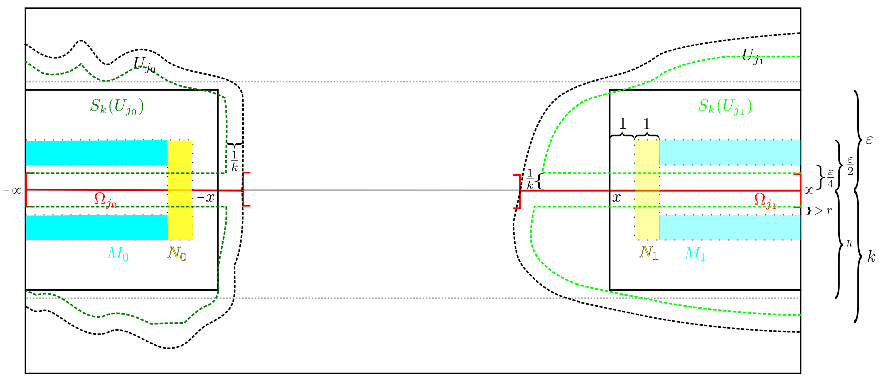}
\captionsetup{type=figure}
\caption{case: $-\infty\in\Omega_{j_{0}}$, $\infty\in\Omega_{j_{1}}$}
\end{center}
The sets $N_{i}$ are clearly bounded and $\overline{N}_{0}\subset U_{j_{0}}$ 
as well as $\overline{N}_{1}\subset U_{j_{1}}$. This implies
\begin{equation}\label{eq:estimate_C_ast_1}
 \sup_{\substack{z\in N_{i}\\ \beta\in\N^{2}_{0},|\beta|\leq m}}
 p_{\alpha}(\partial_{\C}^{|\beta|}(f_{j_{i}}-\widetilde{F})(z))\e^{-\frac{1}{n}|\re(z)|}
<\infty,\quad i=0,1,
\end{equation}
by \eqref{eq:f_j_minus_tilde_F_holom}. If we set
\[
r\coloneq\frac{1}{2}\min\bigl(2,\frac{\varepsilon}{2},\frac{\varepsilon}{4}\bigr)=\min\bigl(1,\frac{\varepsilon}{8}\bigr)
\]
and choose $k\in\N$ with $k>\max(n,\varepsilon)$ and $\tfrac{1}{k}<\tfrac{\varepsilon}{8}$
and, in addition, $-k<x$, if $\infty\notin\Omega_{j_{0}}$ resp.\ $-\infty\notin \Omega_{j_{1}}$, then
\[
\overline{\D}_{r}(z)\subset S_{k}(U_{j_{i}})\subset S_{k}(\overline{\Omega}),\quad i=0,1,
\]
holds for all $z\in M_{i}$ like in \eqref{eq:disc_in_S_k}. 
Due to \prettyref{prop:Cauchy_estimates} we have for $i=0,1$
\begin{equation}\label{eq:estimate_C_ast_2}
     \sup_{\substack{z\in M_{i}\\ \beta\in\N^{2}_{0},|\beta|\leq m}}
     p_{\alpha}(\partial_{\C}^{|\beta|}(f_{j_{i}}-\widetilde{F})(z))\e^{-\frac{1}{n}|\re(z)|}
\leq \e^{\frac{r}{n}}\frac{m!}{r^{m}}(\vertiii{f_{j_{i}}}_{U^{\ast}_{j_{i}},k,\alpha}
     +|\widetilde{F}|_{\overline{\Omega},k,\alpha})<\infty.
\end{equation}
So we get $C(f_{j_{i}}-\widetilde{F})<\infty$, $i=0,1$, by \eqref{eq:estimate_C_ast_1} and \eqref{eq:estimate_C_ast_2}, implying 
$\overline{\partial}(\varphi_{0}(f_{j_{0}}-\widetilde{F})+\varphi_{1}(f_{j_{1}}-\widetilde{F}))\in\mathcal{E}^{exp}(\overline{\C},E)$ 
by virtue of \eqref{eq:estimate_molified_S2}. 
Since $E$ is admissible, there is $g\in\mathcal{E}^{exp}(\overline{\C},E)$ such that
\begin{equation}\label{eq:CR_solution_S2}
\overline{\partial}g=\overline{\partial}(\varphi_{0}(f_{j_{0}}-\widetilde{F})+\varphi_{1}(f_{j_{1}}-\widetilde{F})).
\end{equation}

(iii.3) We set $h\coloneq\varphi_{0}(f_{j_{0}}-\widetilde{F})+\varphi_{1}(f_{j_{1}}-\widetilde{F})-g$. 
Then $h\in\mathcal{O}(\C,E)$ by \eqref{eq:CR_solution_S2} and for all $n\in\N$, $n\geq 2$, and $\alpha\in\mathfrak{A}$ we have
\begin{flalign}\label{eq:estimate_G}
&\hspace{0.35cm} |h|_{\{\pm\infty\},n,\alpha}\nonumber\\
&\leq\sum_{i=0,1}\sup_{z\in S_{n}(\{\pm\infty\})\setminus G_{i}}
 p_{\alpha}(\varphi_{i}(f_{j_{i}}-\widetilde{F})(z))\e^{-\frac{1}{n}|\re(z)|}
 +\underbrace{\sup_{z\in S_{n}(\varnothing)}p_{\alpha}(g(z))\e^{-\frac{1}{n}|\re(z)|}}_{=|g|_{\varnothing,n,\alpha}}\nonumber\\
&\leq\sum_{i=0,1}\sup_{z\in S_{n}(\{\pm\infty\})\setminus G_{i}}p_{\alpha}((f_{j_{i}}-\widetilde{F})(z))\e^{-\frac{1}{n}|\re(z)|}
 +|g|_{\varnothing,n,\alpha}.
\end{flalign}
Furthermore, if we choose $k\in\N$ such that $k>n$ and $\tfrac{1}{k}<\min(1,\tfrac{\varepsilon}{2})$ and, 
in addition, $-k<x+1$, if $\infty\notin\Omega_{j_{0}}$ resp.\ $-\infty\notin\Omega_{j_{1}}$, 
then $[S_{n}(\{\pm\infty\})\setminus G_{i}]\subset [M_{i}\cup S_{k}(U_{j_{i}})]$, $i=0,1$, where 
\[
M_{i}\coloneq
\begin{cases}
\varnothing &,\;n\leq x+1,\\
\{z\in\C\;|\;-n<\re(z)<-x-1,\,|\im(z)|\leq\tfrac{1}{n}\} &,\;n> x+1,\, i=0,\\
\{z\in\C\;|\;x+1<\re(z)< n,\,|\im(z)|\leq\tfrac{1}{n}\} &,\;n> x+1,\, i=1,
\end{cases}
\]
and its closure $\overline{M}_{i}$ is a compact subset of $U_{j_{i}}\cap\C$. 
\begin{center}
\includegraphics[scale=0.8]{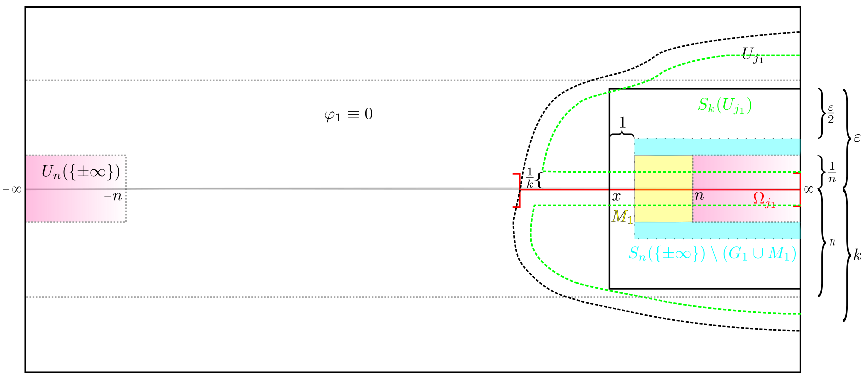}
\captionsetup{type=figure}
\caption{case: $-\infty\in\Omega_{j_{0}},$ $\infty\in\Omega_{j_{1}},$ $n>x+1,$ $i=1$}
\end{center}
In addition, $S_{k}(U_{j_{i}})\subset S_{k}(\overline{\Omega})$ and hence, keeping \eqref{eq:f_j_minus_tilde_F_holom} in mind,
\begin{flalign*}
&\hspace{0.35cm}\sup_{z\in S_{n}(\{\pm\infty\})\setminus G_{i}}p_{\alpha}((f_{j_{i}}-\widetilde{F})(z))\e^{-\frac{1}{n}|\re(z)|}\\
&\leq \sum_{i=0,1}\sup_{z\in M_{i}}p_{\alpha}((f_{j_{i}}-\widetilde{F})(z))\e^{-\frac{1}{n}|\re(z)|}
     +\underbrace{\sup_{z\in S_{k}(U_{j_{i}})}p_{\alpha}((f_{j_{i}})(z))
      \e^{-\frac{1}{n}|\re(z)|}}_{=\vertiii{f_{j_{i}}}_{U^{\ast}_{j_{i}},k,\alpha}}\nonumber\\
&\phantom{\leq}+2\underbrace{\sup_{z\in S_{k}(\overline{\Omega})}p_{\alpha}(\widetilde{F}(z))
               \e^{-\frac{1}{n}|\re(z)|}}_{=|\widetilde{F}|_{\overline{\Omega},n,\alpha}}\nonumber\\
&\leq 2|\widetilde{F}|_{\overline{\Omega},n,\alpha}+\sum_{i=0,1}\vertiii{f_{j_{i}}}_{U^{\ast}_{j_{i}},k,\alpha}
 +\sum_{i=0,1}\sup_{z\in \overline{M}_{i}}p_{\alpha}((f_{j_{i}}-\widetilde{F})(z))\e^{-\frac{1}{n}|\re(z)|}<\infty.
\end{flalign*}
So we gain $h\in\mathcal{O}^{exp}(\overline{\C}\setminus\{\pm\infty\},E)$ by \eqref{eq:estimate_G}.

(iii.4) Now, we define the function $F^{\ast}\coloneq\widetilde{F}+h$. Then we have
\[
F^{\ast}=\widetilde{F}+h\in\mathcal{O}^{exp}(\overline{\C}\setminus\overline{\Omega},E)
        \subset\mathcal{O}^{exp}(U\setminus\overline{\R},E).
\]
The last step is to prove that $F^{\ast}$ has the desired property, i.e.\ $[F^{\ast}]_{\mid\Omega_{j}}=[f_{j}]$ for all $j\in J$. 
If $j\in J$ with $\pm\infty\notin\Omega_{j}$, then
\[
f_{j}-F^{\ast}=(f_{j}-\widetilde{F})-h\in\mathcal{O}(U_{j}\cap\C,E)
\]
by \eqref{eq:f_j_minus_tilde_F_holom} and since $\mathcal{O}^{exp}(\overline{\C}\setminus\{\pm\infty\},E)\subset\mathcal{O}(\C,E)$. 
Thus we have $[F^{\ast}]_{\mid\Omega_{j}}=[f_{j}]$.

Let $j\in J$ such that $-\infty\in\Omega_{j}$ or $\infty\in\Omega_{j}$. 
Then we have for $n\in\N$, $n\geq 2$, and $\alpha\in\mathfrak{A}$
\begin{flalign}\label{eq:estimate_f_j_minus_F_ast}
&\hspace{0.35cm}\vertiii{f_{j}-F^{\ast}}_{U_{j},n,\alpha}\nonumber\\
&=\sup_{z\in T_{n}(U_{j})}p_{\alpha}((f_{j}-\widetilde{F}-\varphi_{0}(f_{j_{0}}
  -\widetilde{F})-\varphi_{1}(f_{j_{1}}-\widetilde{F})+g)(z))
  \e^{-\frac{1}{n}|\re(z)|}\nonumber\\
&\leq \sum_{i=0,1}\sup_{z\in T_{n}(U_{j})\cap H_{i}}p_{\alpha}((f_{j}-f_{j_{i}})(z))\e^{-\frac{1}{n}|\re(z)|}
 +\underbrace{\sup_{z\in S_{n}(\varnothing)}p_{\alpha}(g(z))\e^{-\frac{1}{n}|\re(z)|}}_{=|g|_{\varnothing,n,\alpha}}\nonumber\\
&\phantom{\leq}+\sup_{z\in T_{n}(U_{j})\setminus(H_{0}\cup H_{1})}
 p_{\alpha}((f_{j}-\widetilde{F}-\varphi_{0}(f_{j_{0}}-\widetilde{F})-\varphi_{1}(f_{j_{1}}-\widetilde{F}))(z))\e^{-\frac{1}{n}|\re(z)|}
\end{flalign}
where we used $T_{n}(U_{j})\subset S_{n}(\varnothing)$ plus
\begin{equation}\label{thm27.9}
H_{0}\subset G_{1} \quad\text{and}\quad H_{1}\subset G_{0}.
\end{equation}
Moreover, the following estimate holds
\begin{flalign}\label{thm27.10}
&\hspace{0.35cm}\sup_{z\in T_{n}(U_{j})\setminus(H_{0}\cup H_{1})}
 p_{\alpha}((f_{j}-\widetilde{F}-\varphi_{0}(f_{j_{0}}-\widetilde{F})
 -\varphi_{1}(f_{j_{1}}-\widetilde{F}))(z))\e^{-\frac{1}{n}|\re(z)|}\nonumber\\
&\leq \sup_{z\in T_{n}(U_{j})\setminus(H_{0}\cup H_{1})}p_{\alpha}((f_{j}-\widetilde{F})(z))\e^{-\frac{1}{n}|\re(z)|}\nonumber\\
&\phantom{\leq}+\sum_{i=0,1}\sup_{z\in T_{n}(U_{j})\setminus(H_{i}\cup G_{i})}
 p_{\alpha}((f_{j_{i}}-\widetilde{F})(z))\e^{-\frac{1}{n}|\re(z)|}
\end{flalign}
by \eqref{thm27.9} and the properties of $\varphi_{i}$. Choose $k\in\N$ such that $k>\max(n,\tfrac{\varepsilon}{2})$ 
and $\tfrac{1}{k}<\tfrac{\varepsilon}{4}$ and, in addition, $-k<x+1,$ 
if $\infty\notin \Omega_{j_{0}}$ resp.\ $-\infty\notin \Omega_{j_{1}}$. 
We remark that
\begin{align*}
T_{n}(U_{j})\setminus(H_{i}\cup G_{i})&\subset
\begin{cases}
[(-\infty,-x-1)\times(-\tfrac{\varepsilon}{2},\tfrac{\varepsilon}{2})]\setminus 
 ((-\infty,-x-2]\times[-\tfrac{\varepsilon}{4},\tfrac{\varepsilon}{4}])&,\;i=0,\\
[(x+1,\infty)\times(-\tfrac{\varepsilon}{2},\tfrac{\varepsilon}{2})]\setminus
 ([x+2,\infty)\times[-\tfrac{\varepsilon}{4},\tfrac{\varepsilon}{4}])&,\;i=1,
\end{cases}\\
&\subset S_{k}(U_{j_{i}})\cup M_{i},\quad i=0,1,
\end{align*}
with
\[
M_{i}\coloneq
\begin{cases}
\{z\in\C\;|\;-x-2<\re(z)<-x-1,\,|\im(z)|\leq\tfrac{1}{k}\} &,\;i=0,\\
\{z\in\C\;|\;x+1<\re(z)<x+2,\,|\im(z)|\leq\tfrac{1}{k}\}   &,\;i=1,
\end{cases}
\]
by the choice of $k$.
\begin{center}
\includegraphics[scale=0.8]{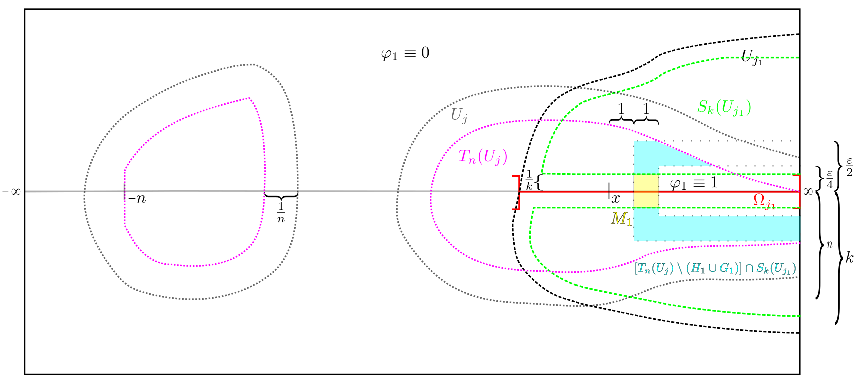}
\captionsetup{type=figure}
\caption{case $i=1$: $\infty\in\Omega_{j}$, $-\infty\notin\Omega_{j}$, $\infty\in\Omega_{j_{1}}$, $-\infty\notin\Omega_{j_{1}}$}
\end{center}
The sets $M_{i}$, $i=0,1$, are obviously bounded and $\overline{M}_{i}\subset(U_{j_{i}}\cap\C)$. 
Further, we define the set
\[
M_{2}\coloneq [T_{n}(U_{j})\setminus(H_{0}\cup H_{1})]\setminus S_{k}(U_{j})
\]
which is bounded, since $M_{2}\subset\{z\in\C\;|\;-x-2<\re(z)<x+2,\,|\im(z)|\leq\tfrac{1}{k}\}$ 
due to the choice of $k$, and we have $\overline{M}_{2}\subset\overline{T_{n}(U_{j})}\subset(U_{j}\cap\C)$.
\begin{center}
\includegraphics[scale=0.8]{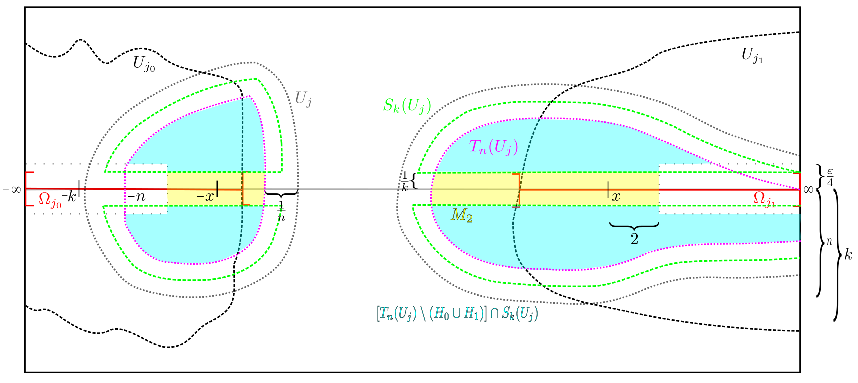}
\captionsetup{type=figure}
\caption{case: $\infty\in\Omega_{j}$, $-\infty\notin\Omega_{j}$, $-\infty\in\Omega_{j_{0}}$, $\infty\in\Omega_{j_{1}}$}
\end{center}
These results yield to
\begin{flalign*}
&\hspace{0.35cm}\sup_{z\in T_{n}(U_{j})\setminus(H_{i}\cup G_{i})}p_{\alpha}((f_{j_{i}}-\widetilde{F})(z))\e^{-\frac{1}{n}|\re(z)|}\\
&\leq \vertiii{f_{j_{i}}}_{U^{\ast}_{j_{i}},k,\alpha}+|\widetilde{F}|_{\overline{\Omega},k,\alpha}
 +\sup_{z\in \overline{M}_{i}}p_{\alpha}((f_{j_{i}}-\widetilde{F})(z))\e^{-\frac{1}{n}|\re(z)|}<\infty
\end{flalign*}
for $i=0,1$ and
\begin{flalign*}
&\hspace{0.35cm}\sup_{z\in T_{n}(U_{j})\setminus(H_{0}\cup H_{1})}p_{\alpha}((f_{j}-\widetilde{F})(z))\e^{-\frac{1}{n}|\re(z)|}\\
&\leq \vertiii{f_{j}}_{U^{\ast}_{j},k,\alpha}+|\widetilde{F}|_{\overline{\Omega},k,\alpha}
 +\sup_{z\in \overline{M}_{2}}p_{\alpha}((f_{j}-\widetilde{F})(z))\e^{-\frac{1}{n}|\re(z)|}
<\infty
\end{flalign*}
by \eqref{eq:f_j_minus_tilde_F_holom}. Thus the right-hand side of \eqref{thm27.10} is bounded from above.

Let us turn to the still pending estimates in \eqref{eq:estimate_f_j_minus_F_ast}, 
so we have to take a look at the sets $T_{n}(U_{j})\cap H_{i}$, $i=0,1$. 
\begin{center}
\includegraphics[scale=0.8]{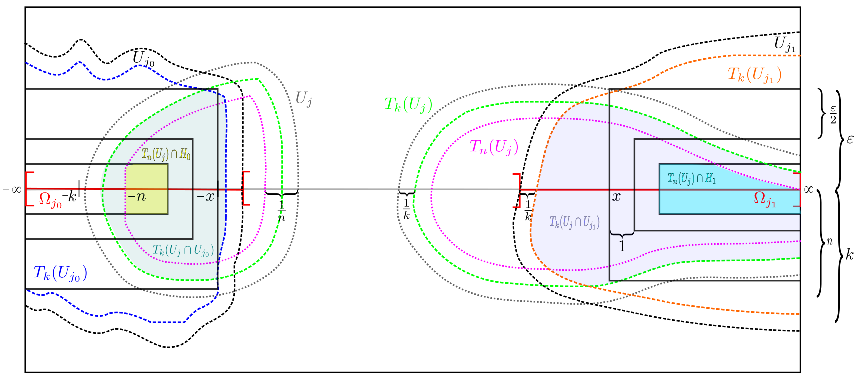}
\captionsetup{type=figure}
\caption{case: $\infty\in\Omega_{j}$, $-\infty\notin\Omega_{j}$, $\infty\notin\Omega_{j_{0}}$, $-\infty\in\Omega_{j_{0}}$, 
$\infty\in\Omega_{j_{1}}$, $-\infty\notin\Omega_{j_{1}}$}
\end{center}
We choose $k\in\N$ such that $k>n$ and $\frac{1}{k}<\min(1,\tfrac{\varepsilon}{2})$ and, 
in addition, $-k<x+1$, if $\infty\notin \Omega_{j_{0}}$ resp.\ $-\infty\notin\Omega_{j_{1}}$. 
Let $z\in H_{i}$, $i=0,1$, with $|\im(z)|<k$. Then $z\in U_{j_{i}}$ and
\[
\re(z)\leq -x-2 <k,\;\;\text{if}\;i=0,\,\infty\notin\Omega_{j_{0}},\quad\text{resp.}\quad
\re(z)\geq x+2>-k ,\;\;\text{if}\;i=1,\,-\infty\notin\Omega_{j_{1}},
\]
by the choice of $k$ as well as
\[
\d(z,\C\cap\partial U_{j_{i}})\geq\min\bigl(1,\frac{\varepsilon}{2}\bigr)>\frac{1}{k},
\]
implying $z\in T_{k}(U_{j_{i}})$. Since $k>n$, we have $T_{n}(U_{j})\subset T_{k}(U_{j})$ and 
thus $(T_{n}(U_{j})\cap H_{i})\subset[T_{k}(U_{j})\cap  T_{k}(U_{j_{i}})]$. 
Now, let $(T_{n}(U_{j})\cap H_{i})\neq\varnothing$ for some $i=0,1$ 
(in the case ``$=\varnothing$'' we have 
$\sup_{z\in T_{n}(U_{j})\cap H_{i}}\ldots=-\infty$ in \eqref{eq:estimate_f_j_minus_F_ast}). 
Let $z\in T_{k}(U_{j})\cap  T_{k}(U_{j_{i}})$, which is a non-empty set. Then $z\in U_{j}\cap U_{j_{i}}$ and $|\im(z)|<k$.
Since $\C\cap\partial(U_{j}\cap U_{j_{i}})$ is closed, there is $z_{0}\in\C\cap\partial(U_{j}\cap U_{j_{i}})$ with
\[
\d(z,\C\cap\partial(U_{j}\cap U_{j_{i}}))=|z-z_{0}|
\]
if $\C\cap\partial(U_{j}\cap U_{j_{i}})\neq\varnothing$. Moreover,
\[
[\C\cap\partial(U_{j}\cap U_{j_{i}})]\subset [(\C\cap\partial U_{j})\cup(\C\cap\partial U_{j_{i}})]
\]
and thus we obtain
\[
\d(z,\C\cap\partial(U_{j}\cap U_{j_{i}}))=|z-z_{0}|\geq
\left.\begin{cases}
\d(z,\C\cap\partial U_{j})&,\;z_{0}\in\C\cap\partial U_{j},\\
\d(z,\C\cap\partial U_{j_{i}})&,\;z_{0}\in\C\cap\partial U_{j_{i}},
\end{cases}\right\}
> \frac{1}{k}
\]
if $\C\cap\partial(U_{j}\cap U_{j_{i}})\neq\varnothing$. In the case $\C\cap\partial(U_{j}\cap U_{j_{i}})=\varnothing$ 
we note that $\d(z,\C\cap\partial(U_{j}\cap U_{j_{i}}))=\infty>\tfrac{1}{k}$.
If $\pm\infty\notin \Omega_{j}\cap \Omega_{j_{i}}$, we have in addition $-k<\re(z)<k$. 
Therefore, $T_{k}(U_{j})\cap  T_{k}(U_{j_{i}})$ is bounded and its closure is a subset of $U_{j}\cap U_{j_{i}}\cap\C$ 
if $\pm\infty\notin \Omega_{j}\cap \Omega_{j_{i}}$, and $[T_{k}(U_{j})\cap T_{k}(U_{j_{i}})]\subset T_{k}(U_{j}\cap U_{j_{i}})$ 
if $-\infty\in\Omega_{j}\cap\Omega_{j_{i}}$ or $\infty\in\Omega_{j}\cap\Omega_{j_{i}}$. 
Because $f_{j}-f_{j_{i}}\in\mathcal{O}^{exp}(U_{j}\cap U_{j_{i}},E)$, this yields to
\begin{flalign*}
&\hspace{0.35cm}\sup_{z\in T_{n}(U_{j})\cap H_{i}}p_{\alpha}((f_{j}-f_{j_{i}})(z))\e^{-\frac{1}{n}|\re(z)|}\\
&\leq
 \begin{cases}
  \sup_{z\in\overline{T_{k}(U_{j})\cap T_{k}(U_{j_{i}})}}p_{\alpha}((f_{j}-f_{j_{i}})(z))\e^{-\frac{1}{k}|\re(z)|}&,
  \;\pm\infty\notin\Omega_{j}\cap \Omega_{j_{i}},\\
  \vertiii{f_{j}-f_{j_{i}}}_{U_{j}\cap U_{j_{i}},k,\alpha} &,\;\text{else},
 \end{cases}\\
&<\infty.
\end{flalign*}
Combining our results, we conclude $\vertiii{f_{j}-F^{\ast}}_{U_{j},n,\alpha}<\infty$ 
for all $n\in\N$, $n\geq 2$, and $\alpha\in\mathfrak{A}$ 
by \eqref{eq:estimate_f_j_minus_F_ast} and thus $f_{j}-F^{\ast}\in\mathcal{O}^{exp}(U_{j},E)$, 
i.e.\ $[F^{\ast}]_{\mid\Omega_{j}}=[f_{j}]$.

b) Let $[f]\in bv(\Omega,E)=\mathcal{O}^{exp}(U\setminus\overline{\R},E)/\mathcal{O}^{exp}(U,E)$ 
where $U\in\mathcal{U}(\Omega)$ and $\Omega\subset\overline{\R}$ open. 
By \prettyref{lem:bv_indep_U} there is 
$F\in\mathcal{O}^{exp}(\overline{\C}\setminus\overline{\Omega},E)\subset\mathcal{O}^{exp}(\overline{\C}\setminus\overline{\R},E)$ 
such that $f-F\in\mathcal{O}^{exp}(U,E)$. Hence $[F]\in bv(\overline{\R},E)$ is an extension of $[f]$ to $\overline{\R}$.

c)(i) For an open set $\Omega\subset\overline{\R}$, $\Omega\neq\varnothing$, we have the following (algebraic) isomorphisms
\begin{align*}
  \mathcal{R}(\Omega,E)
&=L(\mathcal{P}_{\ast}(\overline{\Omega}),E)/L(\mathcal{P}_{\ast}(\partial\Omega),E)
 \cong\mathcal{O}^{exp}(\overline{\C}\setminus\overline{\Omega},E)/\mathcal{O}^{exp}(\overline{\C}\setminus\partial\Omega,E)\\
&\cong\mathcal{O}^{exp}((\Omega\times\R)\setminus\overline{\R},E)/\mathcal{O}^{exp}(\Omega\times\R,E)
 =bv(\Omega,E).
\end{align*}

(ii) The first isomorphism is due to \prettyref{thm:duality} and given by the map
\begin{gather*}
G_{\Omega}\colon L(\mathcal{P}_{\ast}(\overline{\Omega}),E)/L(\mathcal{P}_{\ast}(\partial\Omega),E)
\to\mathcal{O}^{exp}(\overline{\C}\setminus\overline{\Omega},E)/\mathcal{O}^{exp}(\overline{\C}\setminus\partial\Omega,E),\\
[T]\mapsto[\widetilde{T}]_{\sim},\quad\text{with}\;[\widetilde{T}]_{\overline{\Omega}}=H^{-1}_{\overline{\Omega}}(T),
\end{gather*}
where $H_{\overline{\Omega}}$ is the isomorphism from \prettyref{thm:duality} and 
we denote by $[\cdot]$ the equivalence classes in $L(\mathcal{P}_{\ast}(\overline{\Omega}),E)/L(\mathcal{P}_{\ast}(\partial\Omega),E)$,
by $[\cdot]_{\sim}$ the ones in $\mathcal{O}^{exp}(\overline{\C}\setminus\overline{\Omega},E)/
\mathcal{O}^{exp}(\overline{\C}\setminus\partial\Omega,E)$ 
and by $[\cdot]_{\overline{\Omega}}$ the ones in $\mathcal{O}^{exp}(\overline{\C}\setminus\overline{\Omega},E)/
\mathcal{O}^{exp}(\overline{\C},E)$.

\emph{well-defined}: Let $T_{0},T_{1}\in L(\mathcal{P}_{\ast}(\overline{\Omega}),E)$ such that $[T_{0}]=[T_{1}]$, i.e.\ 
$T_{0}-T_{1}\in L(\mathcal{P}_{\ast}(\partial\Omega),E)$. Then
\[
H^{-1}_{\overline{\Omega}}(T_{0}-T_{1})=H^{-1}_{\partial\Omega}(T_{0}-T_{1})
\]
by \eqref{eq:unabh.H_inv} and
\begin{align*}
  [\widetilde{T}_{0}-\widetilde{T}_{1}]_{\overline{\Omega}}
&=[\widetilde{T}_{0}]_{\overline{\Omega}}-[\widetilde{T}_{1}]_{\overline{\Omega}}
 =H^{-1}_{\overline{\Omega}}(T_{0})-H^{-1}_{\overline{\Omega}}(T_{1})
 =H^{-1}_{\overline{\Omega}}(T_{0}-T_{1})\\
&=H^{-1}_{\partial\Omega}(T_{0}-T_{1})
 \in\mathcal{O}^{exp}(\overline{\C}\setminus\partial\Omega,E)/\mathcal{O}^{exp}(\overline{\C},E)
\end{align*}
holds. Thus $\widetilde{T}_{0}-\widetilde{T}_{1}\in\mathcal{O}^{exp}(\overline{\C}\setminus\partial\Omega,E)$, 
i.e.\ $[\widetilde{T}_{0}-\widetilde{T}_{1}]_{\sim}=0$.
On the other hand, let $T\in L(\mathcal{P}_{\ast}(\overline{\Omega}),E)$ and 
$\widetilde{T}_{0},\widetilde{T}_{1}\in\mathcal{O}^{exp}(\overline{\C}\setminus\overline{\Omega},E)$ 
such that $[\widetilde{T}_{0}]_{\overline{\Omega}}=[\widetilde{T}_{1}]_{\overline{\Omega}}=H^{-1}_{\overline{\Omega}}(T)$. 
Then $\widetilde{T}_{0}-\widetilde{T}_{1}\in\mathcal{O}^{exp}(\overline{\C},E)
\subset\mathcal{O}^{exp}(\overline{\C}\setminus\partial\Omega,E)$ 
and hence $[\widetilde{T}_{0}-\widetilde{T}_{1}]_{\sim}=0$. 

\emph{injectivity}: Let $T\in L(\mathcal{P}_{\ast}(\overline{\Omega}),E)$ with $G_{\Omega}(T)=[\widetilde{T}]_{\sim}=0$. 
Then $\widetilde{T}\in\mathcal{O}^{exp}(\overline{\C}\setminus\partial\Omega,E)$ and thus
\[
 H^{-1}_{\overline{\Omega}}(T)
=[\widetilde{T}]_{\overline{\Omega}}
\in\mathcal{O}^{exp}(\overline{\C}\setminus\partial\Omega,E)/\mathcal{O}^{exp}(\overline{\C},E).
\]
Therefore, we get
\[
T=H_{\overline{\Omega}}(H^{-1}_{\overline{\Omega}}(T))=H_{\partial\Omega}(H^{-1}_{\overline{\Omega}}(T))
 \in L(\mathcal{P}_{\ast}(\partial\Omega),E)
\]
by \eqref{eq:unabh.H} and so $[T]=0$.

\emph{surjectivity}: Let $T_{0}\in\mathcal{O}^{exp}(\overline{\C}\setminus\overline{\Omega},E)$. 
Then we have $H_{\overline{\Omega}}([T_{0}]_{\overline{\Omega}})\in L(\mathcal{P}_{\ast}(\overline{\Omega}),E)$ 
by \prettyref{thm:duality}. We define $T\coloneq H_{\overline{\Omega}}([T_{0}]_{\overline{\Omega}})$ and get
\[
 H^{-1}_{\overline{\Omega}}(T)
=H^{-1}_{\overline{\Omega}}(H_{\overline{\Omega}}([T_{0}]_{\overline{\Omega}}))
=[T_{0}]_{\overline{\Omega}}
\]
by \prettyref{thm:duality} again. This means that $G_{\Omega}([T])=[T_{0}]_{\sim}$.

(iii) The second isomorphism is defined by the map
\begin{gather*}
J_{\Omega}\colon \mathcal{O}^{exp}(\overline{\C}\setminus\overline{\Omega},E)/\mathcal{O}^{exp}(\overline{\C}\setminus\partial\Omega,E)
\to\mathcal{O}^{exp}((\Omega\times\R)\setminus\overline{\R},E)/\mathcal{O}^{exp}(\Omega\times\R,E),\\
[f]_{\sim}\mapsto[f_{\mid((\Omega\times\R)\setminus\overline{\R})\cap\C}]_{\Omega},
\end{gather*}
where $[\cdot]_{\Omega}$ denotes the equivalence classes in 
$\mathcal{O}^{exp}((\Omega\times\R)\setminus\overline{\R},E)/\mathcal{O}^{exp}(\Omega\times\R,E)$.
This map is well-defined since $\mathcal{O}^{exp}(\overline{\C}\setminus\partial\Omega,E)\subset\mathcal{O}^{exp}(\Omega\times\R,E)$.

\emph{injectivity}: Let $f\in\mathcal{O}^{exp}(\overline{\C}\setminus\overline{\Omega},E)$ with $J_{\Omega}([f]_{\sim})=0$, i.e.\ 
$f\in\mathcal{O}^{exp}(\Omega\times\R,E)$. Then it follows that $f\in\mathcal{O}(\C\setminus\partial\Omega,E)$. 
Further, the estimate
\begin{equation}\label{eq:estimate_f_partial_omega}
    |f|_{\partial\Omega,n,\alpha}
\leq\underbrace{\sup_{z\in  S_{n}(\overline{\Omega})}p_{\alpha}(f(z))\e^{-\frac{1}{n}|\re(z)|}}_{=|f|_{\overline{\Omega},n,\alpha}} 
 +\sup_{z\in S_{n}(\partial\Omega)\setminus S_{n}(\overline{\Omega})}p_{\alpha}(f(z))\e^{-\frac{1}{n}|\re(z)|}
\end{equation}
holds for all $n\in\N$, $n\geq 2$, and $\alpha\in\mathfrak{A}$.
Let us examine the set $S_{n}(\partial\Omega)\setminus S_{n}(\overline{\Omega})$. 
We have for $z\in S_{n}(\partial\Omega)\setminus S_{n}(\overline{\Omega})$
\[
\re(z)\in
\begin{cases}
[\min(\R\cap\partial\Omega),\max(\R\cap\partial\Omega)]&,\;\pm\infty\notin\overline{\Omega},\\
[-n,n]&,\;\pm\infty\in\partial\Omega,\\
(-\infty,n]&,\;-\infty\in\Omega,\,\infty\in\partial\Omega,\\
[-n,\infty)&,\;-\infty\in\partial\Omega,\,\infty\in\Omega,\\
\R&,\;\pm\infty\in\Omega,\\
[-n,\max(\R\cap\partial\Omega)]&,\;-\infty\in\partial\Omega,\,\infty\notin\overline{\Omega},\\
(-\infty,\max(\R\cap\partial\Omega)]&,\;-\infty\in\Omega,\,\infty\notin\overline{\Omega},\\
[\min(\R\cap\partial\Omega),n]&,\;-\infty\notin\overline{\Omega},\,\infty\in\partial\Omega,\\
[\min(\R\cap\partial\Omega),\infty)&,\;-\infty\notin\overline{\Omega},\,\infty\in\Omega,
\end{cases}
\]
and $|\im(z)|\leq\tfrac{1}{n}$. Furthermore, we observe that 
$W\coloneq\bigcup_{x\in\R\cap\partial\Omega}\D_{\tfrac{1}{n}}(x)$ is open and
\begin{equation}\label{eq:Sn_partial_minus_Sn_overline}
 \overline{S_{n}(\partial\Omega)\setminus S_{n}(\overline{\Omega})}
=([\overline{S_{n}(\partial\Omega)\setminus S_{n}(\overline{\Omega})}]\setminus W)\subset\overline{W^{C}}
=W^{C}\subset\C\setminus\partial\Omega.
\end{equation}
So, if $\pm\infty\notin\Omega$, then $\overline{S_{n}(\partial\Omega)\setminus S_{n}(\overline{\Omega})}$ 
is a compact subset of $\C\setminus\partial\Omega$. 
Due to \eqref{eq:estimate_f_partial_omega} and since $f\in\mathcal{O}(\C\setminus\partial\Omega,E)$, 
we get $|f|_{\partial\Omega,n,\alpha}<\infty$ in this case.
Let $-\infty\in\Omega$ or $\infty\in\Omega$. Then there are $x_{i}\in\R$, $i=0,1$, 
such that $[-\infty,x_{0}]\subset\Omega$ resp.\ $[x_{1},\infty]\subset\Omega$. We choose $k\in\N$ such that $k>n$ and, 
in addition,
\[
k>x_{0},\;\;\text{if}\;-\infty\in\Omega,\;\infty\notin\Omega,\quad\text{resp.}\quad
-k<x_{1},\;\;\text{if}\;-\infty\notin\Omega,\;\infty\in\Omega.
\]
Then we obtain for $z\in[S_{n}(\partial\Omega)\setminus S_{n}(\overline{\Omega})]\setminus T_{k}(\Omega\times\R)\eqqcolon M$
\[
|\re(z)|\leq
\begin{cases}
\max(|x_{0}|,n)&,\;-\infty\in\Omega,\,\infty\in\partial\Omega,\\
\max(|x_{1}|,n)&,\;-\infty\in\partial\Omega,\,\infty\in\Omega,\\
\max(|x_{0}|,|x_{1}|)&,\;\pm\infty\in\Omega,\\
\max(|x_{0}|,|\max(\R\cap\partial\Omega)|)&,\;-\infty\in\Omega,\,\infty\notin\overline{\Omega},\\
\max(|\min(\R\cap\partial\Omega)|,|x_{1}|)&,\;-\infty\notin\overline{\Omega},\,\infty\in\Omega,
\end{cases}
\]
by the choice of $k$ and as $\partial\Omega\subset\Omega^{C}$. Hence $M$ is bounded, thus $\overline{M}$ compact, 
and $\overline{M}\subset(\C\setminus\partial\Omega)$ by \eqref{eq:Sn_partial_minus_Sn_overline}. 
Therefore, we gain 
\begin{flalign*}
&\hspace{0.35cm}\sup_{z\in S_{n}(\partial\Omega)\setminus S_{n}(\overline{\Omega})}p_{\alpha}(f(z))\e^{-\frac{1}{n}|\re(z)|}\\
&\leq\underbrace{\sup_{z\in T_{k}(\Omega\times\R)}p_{\alpha}(f(z))\e^{-\frac{1}{k}|\re(z)|}}_{=\vertiii{f}_{\Omega\times\R,k,\alpha}}
 +\sup_{z\in\overline{M}}p_{\alpha}(f(z))\e^{-\frac{1}{n}|\re(z)|}<\infty
\end{flalign*}
since $f\in\mathcal{O}^{exp}(\Omega\times\R,E)$ and $f\in\mathcal{O}(\C\setminus\partial\Omega,E)$. 
By \eqref{eq:estimate_f_partial_omega} we have $|f|_{\partial\Omega,n,\alpha}<\infty$ 
for all $n\in\N$, $n\geq 2$, and $\alpha\in\mathfrak{A}$ 
in this case as well and thus $f\in\mathcal{O}^{exp}(\overline{\C}\setminus\partial\Omega,E)$, 
proving the injectivity of $J_{\Omega}$.

\emph{surjectivity}: 
Let $[f]_{\Omega}\in\mathcal{O}^{exp}((\Omega\times\R)\setminus\overline{\R},E)/\mathcal{O}^{exp}(\Omega\times\R,E)$. 
By \prettyref{lem:bv_indep_U} there is $F\in\mathcal{O}^{exp}(\overline{\C}\setminus\overline{\Omega},E)$ 
such that $f-F\in\mathcal{O}^{exp}(\Omega\times\R,E)$, i.e.\ $J_{\Omega}([F]_{\sim})=[f]_{\Omega}$.

(iv) The last step is to prove that these isomorphisms, which we denote by $h_{\Omega}\coloneq J_{\Omega}\circ G_{\Omega}$, 
are compatible with the respective restrictions, i.e.\ that for open sets $\Omega_{1}\subset\Omega\subset\overline{\R}$ 
the diagram
\[
\begin{xy}
  \xymatrix{
      \mathcal{R}(\Omega,E) \ar[r]^{h_{\Omega}} \ar[d]_{R^{\mathcal{R}}_{\Omega,\Omega_{1}}}    &   bv(\Omega,E)
       \ar[d]^{R^{bv}_{\Omega,\Omega_{1}}} \\
      \mathcal{R}(\Omega_{1},E) \ar[r]_{h_{\Omega_{1}}}             &   bv(\Omega_{1},E)
  }
\end{xy}
\]
commutes. Let $T\in L(\mathcal{P}_{\ast}(\overline{\Omega}),E)$. We choose a representative $T_{0}$ of 
$R^{\mathcal{R}}_{\Omega,\Omega_{1}}([T])$. By the definition of the restriction
\begin{equation}\label{eq:T_0_minus_T}
T_{0}-T\in L(\mathcal{P}_{\ast}(\overline{\Omega}\setminus\Omega_{1}),E)
\end{equation}
is valid. Let $\widetilde{T}_{0}$ be a representative of $H^{-1}_{\overline{\Omega}}(T_{0})$. Then we have
\[
 (h_{\Omega_{1}}\circ R^{\mathcal{R}}_{\Omega,\Omega_{1}})([T])
=h_{\Omega_{1}}([T_{0}]_{1})
=(J_{\Omega_{1}}\circ G_{\Omega_{1}})([T_{0}]_{1})
=[\widetilde{T}_{0}\phantom{}_{\mid((\Omega_{1}\times\R)\setminus\overline{\R})\cap\C}]_{\Omega_{1}}.
\]
On the other hand, let $\widetilde{T}$ be a representative of $H^{-1}_{\overline{\Omega}}(T)$. Then we get
\[
 (R^{bv}_{\Omega,\Omega_{1}}\circ h_{\Omega})([T])
=R^{bv}_{\Omega,\Omega_{1}}([\widetilde{T}_{\mid((\Omega\times\R)\setminus\overline{\R})\cap\C}]_{\Omega})
=[\widetilde{T}_{\mid((\Omega_{1}\times\R)\setminus\overline{\R})\cap\C}]_{\Omega_{1}}.
\]
Further,
\[
 [\widetilde{T}_{0}-\widetilde{T}]_{\overline{\Omega}}
=H^{-1}_{\overline{\Omega}}(T_{0}-T)=H^{-1}_{\overline{\Omega}\setminus\Omega_{1}}(T_{0}-T)
\in\mathcal{O}^{exp}(\overline{\C}\setminus(\overline{\Omega}\setminus\Omega_{1}),E)/\mathcal{O}^{exp}(\overline{\C},E)
\]
by \eqref{eq:T_0_minus_T} and \eqref{eq:unabh.H_inv}. Therefore, 
$\widetilde{T}_{0}-\widetilde{T}\in\mathcal{O}^{exp}(\overline{\C}\setminus(\overline{\Omega}\setminus\Omega_{1}),E)
\subset\mathcal{O}^{exp}(\Omega_{1}\times\R,E)$, which implies 
$(h_{\Omega_{1}}\circ R^{\mathcal{R}}_{\Omega,\Omega_{1}})([T])=(R^{bv}_{\Omega,\Omega_{1}}\circ h_{\Omega})([T])$. 
By virtue of \prettyref{prop:sheaf_presheaf_isom} it follows that $\mathcal{R}(E)$ is a sheaf which is isomorphic to $bv(E)$.
\end{proof}

\prettyref{thm:sheaf_flabby} a), b) for $E=\C$ can be found in \cite[Corollary 3.2.3, p.\ 482]{Kawai}, 
and \prettyref{thm:sheaf_flabby} a)--c) for Fr\'echet spaces $E$ in \cite[3.8 Folgerung, p.\ 40]{J}, \cite[3.12 Satz, p.\ 44]{J} 
and \cite[Satz, p.\ 45--46]{J}. 
The counterpart of \prettyref{thm:sheaf_flabby} in the theory of vector-valued hyperfunctions is \cite[Theorem 6.9, p.\ 1125]{D/L}
and immediately we get the following corollary whose counterpart for hyperfunctions is \cite[Corollary 6.10, p. 1126]{D/L}.

\begin{cor}\label{cor:flabby}
Let $\Omega\subset\overline{\R}$ be open and $E$ sequentially complete and strictly admissible. 
Then $\{\mathcal{R}(\omega,E)\;|\;\omega\subset\Omega\;\text{open}\}$, 
equipped with the restrictions of \prettyref{def:restrictions_op_sheaf}, forms a flabby sheaf.
\end{cor}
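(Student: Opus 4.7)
The plan is to deduce this directly from \prettyref{thm:sheaf_flabby} using the general principle that the restriction of a flabby sheaf on a topological space $X$ to an open subset $U\subset X$ is again a flabby sheaf on $U$. Since strict admissibility is exactly the hypothesis under which \prettyref{thm:sheaf_flabby} applies, we have at our disposal that $\mathcal{R}(E)=\mathcal{R}_{\overline{\R}}(E)$ is a sheaf on $\overline{\R}$ (part c), via the isomorphism $h$ with $bv(E)$) and that it is flabby (part b), again transported via $h$). Consequently, no new analytic work—no new cut-off functions, no new solution of the $\overline{\partial}$-equation—is required; everything is a formal check that the data given by \prettyref{def:restrictions_op_sheaf} on the open subsets of $\Omega$ inherits these properties from $\overline{\R}$.

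First I would verify the sheaf property on $\Omega$. For open $\omega_{2}\subset\omega_{1}\subset\Omega$ the restriction $R_{\omega_{1},\omega_{2}}$ is literally the same map used on $\overline{\R}$, so the presheaf axioms and conditions $(S1)$, $(S2)$ for families $\{\omega_{j}\;|\;j\in J\}$ with $\omega=\bigcup_{j\in J}\omega_{j}\subset\Omega$ are special cases of the corresponding conditions for families of open subsets of $\overline{\R}$, which hold by \prettyref{lem:presheaf_S1} and \prettyref{thm:sheaf_flabby} a). Hence $\{\mathcal{R}(\omega,E)\;|\;\omega\subset\Omega\;\text{open}\}$ is a sheaf on $\Omega$.

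Next I would check flabbiness. Fix an open $\omega\subset\Omega$ and let $[T]_{\omega}\in\mathcal{R}(\omega,E)$. By the flabbiness of $\mathcal{R}(E)$ on $\overline{\R}$ (\prettyref{thm:sheaf_flabby} b), c)), the surjectivity of $R_{\overline{\R},\omega}\colon\mathcal{R}(\overline{\R},E)\to\mathcal{R}(\omega,E)$ produces $[\widetilde{T}]\in\mathcal{R}(\overline{\R},E)$ with $R_{\overline{\R},\omega}([\widetilde{T}])=[T]_{\omega}$. Setting $[S]:=R_{\overline{\R},\Omega}([\widetilde{T}])\in\mathcal{R}(\Omega,E)$ and applying the transitivity identity $R_{\Omega,\omega}\circ R_{\overline{\R},\Omega}=R_{\overline{\R},\omega}$ established in part (i) of the proof of \prettyref{lem:presheaf_S1}, we obtain $R_{\Omega,\omega}([S])=[T]_{\omega}$, which gives the required surjectivity of $R_{\Omega,\omega}$.

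I do not expect a genuine obstacle here: the only subtlety is the bookkeeping that the restrictions defined via \prettyref{def:restrictions_op_sheaf} (hence via \prettyref{lem:I_isom_op}) are indeed compatible with those used on $\overline{\R}$, but this is automatic because the definition only depends on the pair of open sets involved, not on the ambient open set. Alternatively one could simply invoke the isomorphism $h$ of \prettyref{thm:sheaf_flabby} c) on each open $\omega\subset\Omega$ and transport the flabby sheaf $bv(E)_{\mid\Omega}$ to $\mathcal{R}(E)_{\mid\Omega}$, which yields the same conclusion.
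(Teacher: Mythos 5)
Your proposal is correct and matches the paper's intent: the paper derives \prettyref{cor:flabby} immediately from \prettyref{thm:sheaf_flabby} via exactly the standard argument that restricting a flabby sheaf to an open subset yields a flabby sheaf (extend a section over $\omega$ to $\overline{\R}$, restrict back to $\Omega$, and use transitivity of the restrictions). Your additional bookkeeping on the sheaf axioms and the compatibility of the restriction maps is sound and fills in what the paper leaves implicit.
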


\prettyref{cor:flabby} provides an answer to a problem stated by Ito, at least for $E$-valued Fourier hyperfunctions in one variable 
(see \cite[Problem B, p.\ 18]{Ito2002}).

Now, we want to describe the sections with support in a given compact set $K\subset\overline{\R}$. 
We recall the definition of the support of a section of a sheaf (see \cite[1.10 Definition, p.\ 7]{Bre}). 
Let $\Omega$ be a topological space, $(\mathcal{F},R^{\mathcal{F}})$ a sheaf on $\Omega$ and $f\in\mathcal{F}(\Omega)$ 
a section of a sheaf. Then the \emph{support} of $f$, denoted by $\operatorname{supp}_{\mathcal{F}}f$ 
or shortly $\operatorname{supp} f$, 
is the complement of the largest open subset of $\Omega$ on which $f=0$, i.e.\
\[
\operatorname{supp} f=\Omega\setminus \bigcup_{V\in Z_{f}}V
\]
where $Z_{f}\coloneq\{V\;|\;V\subset \Omega \,\text{open},\,f_{\mid V}=0\}$ (condition $(S1)$ is used in this definition). 
This directly yields to the following description of the support of an element of $bv(\Omega,E)$ 
for an open set $\Omega\subset\overline{\R}$ and a sequentially complete strictly admissible space $E$. Namely,
let $f=[F]\in\mathcal{O}^{exp}(U\setminus\overline{\R},E)/\mathcal{O}^{exp}(U,E)$, where $U\in\mathcal{U}(\Omega)$, 
and $\Omega_{1}\subset\Omega$ is open. If $-\infty\in\Omega$ or $\infty\in\Omega$, we define the set
\begin{align*}
S_{n}(U,\Omega_{1})&\coloneq\phantom{\cap}
\{z\in U\cap\C\;|\;\d(z,(\overline{\Omega}\cap\R)\setminus\Omega_{1})>\tfrac{1}{n},\,
                 \d(z,\C\cap\partial U)>\tfrac{1}{n},\,|\im(z)|<n\}\\
&\phantom{\coloneq}\cap
\begin{cases}
\C &,\;\pm\infty\in\Omega,\\
\{z\in\C\;|\;\re(z)>-n\} &,\;\infty\in\Omega,\,-\infty\notin\Omega,\\
\{z\in\C\;|\;\re(z)<n\} &,\;\infty\notin\Omega,\,-\infty\in\Omega,
\end{cases}\\
&\phantom{\coloneq}\setminus
\begin{cases}
[(-\infty,-n]\cup[n,\infty)]+\mathsf{i}[-\tfrac{1}{n},\tfrac{1}{n}] &,\;\pm\infty\notin\Omega_{1},\\
(-\infty,-n]+\mathsf{i}[-\tfrac{1}{n},\tfrac{1}{n}] &,\;-\infty\notin\Omega_{1},\,\infty\in\Omega_{1},\\
[n,\infty)+\mathsf{i}[-\tfrac{1}{n},\tfrac{1}{n}] &,\;\infty\notin\Omega_{1},\,-\infty\in\Omega_{1},\\
\varnothing &,\;\pm\infty\in\Omega_{1},
\end{cases}
\end{align*}
for $n\in\N$, $n\geq 2$.

If $-\infty\in\Omega$ or $\infty\in\Omega$, then $f_{\mid\Omega_{1}}=0$ is equivalent to
\begin{enumerate}
\item [a)] $F$ can be extended to a holomorphic function on $[(U\setminus\overline{\R})\cup\Omega_{1}]\cap\C$ if $\pm\infty\notin\Omega_{1}$, or
\item [b)] $F$ can be extended to a holomorphic function on $[(U\setminus\overline{\R})\cup\Omega_{1}]\cap\C$ and
 \begin{equation}\label{eq:supp}
 |F|_{U,\Omega_{1},n,\alpha}\coloneq\sup_{z\in S_{n}(U,\Omega_{1})}p_{\alpha}(F(z))\e^{-\frac{1}{n}|\re(z)|}<\infty
 \end{equation}
 for every $n\in\N$, $n\geq 2$, and $\alpha\in\mathfrak{A}$ if $-\infty\in\Omega_{1}$ or $\infty\in\Omega_{1}$.
\end{enumerate}
We remark that \eqref{eq:supp} is valid in (a) as well. 
If $\pm\infty\notin\Omega$, then $f_{\mid\Omega_{1}}=0$ is equivalent to statement a).

Observing that
\[
 \bigl[(U\setminus\overline{\R})\cup \bigcup_{V\in Z_{f}}V\bigr]\cap\C 
=[(U\setminus\overline{\R})\cup(\Omega\setminus\operatorname{supp}f)]\cap\C
=(U\setminus \operatorname{supp}f)\cap\C,
\]
since $U\in\mathcal{U}(\Omega)$, and
\[
 (\overline{\Omega}\cap\R)\setminus \bigcup_{V\in Z_{f}}V
=(\operatorname{supp}f\cup\partial\Omega)\cap\R,
\]
where the closure and the boundary are taken in $\overline{\R}$, 
we get $F\in\mathcal{O}((U\setminus\operatorname{supp}f)\cap\C,E)$ and, if $-\infty\in\Omega_{1}$ or $\infty\in\Omega_{1}$, 
in addition,
\[
|F|_{U,\bigcup_{V\in Z_{f}}V,n,\alpha}=\sup_{z\in S_{n}(U,\bigcup_{V\in Z_{f}}V)}p_{\alpha}(F(z))\e^{-\frac{1}{n}|\re(z)|}<\infty
\]
for every $n\in\N$, $n\geq 2$, and $\alpha\in\mathfrak{A}$ where we have
\[
\d(z,(\overline{\Omega}\cap\R)\setminus\bigcup_{V\in Z_{f}}V)=\d(z,(\operatorname{supp}f\cup\partial\Omega)\cap\R)
\]
in the definition of $S_{n}(U,\bigcup_{V\in Z_{f}}V)$.

Now, let $K\subset\Omega$ be compact, set
\[
bv_{K}(\Omega,E)\coloneq\{f\in bv(\Omega,E)\;|\;\operatorname{supp}f\subset K\}
\]
and for $U\in\mathcal{U}(\Omega)$
\[
\mathcal{O}^{exp}(U\setminus K,E)\coloneq\{f\in\mathcal{O}((U\setminus K)\cap\C,E)\;|\;\forall\;n\in\N,\,n\geq 2,\,\alpha\in\mathfrak{A}:\;
|F|_{U,\Omega\setminus K,n,\alpha}<\infty\}
\]
if $-\infty\in\Omega$ or $\infty\in\Omega$, resp.\
\[
\mathcal{O}^{exp}(U\setminus K,E)\coloneq\mathcal{O}((U\setminus K)\cap\C,E)
\]
if $\pm\infty\notin\Omega$.
Due to the considerations above and \prettyref{lem:bv_indep_U} we gain the following description of $bv_{K}(\Omega,E)$ 
whose special cases that $E=\C$ or more general that $E$ is a Fr\'echet space 
are given in \cite[Theorem 3.2.1, p.\ 480]{Kawai} and \cite[3.6 Satz, p.\ 37]{J}.

\begin{lem}\label{lem:supp_iso}
Let $\Omega\subset\overline{\R}$ be open, $K\subset\Omega$ compact and $E$ sequentially complete and strictly admissible. 
For any $U\in\mathcal{U}(\Omega)$ we have the (algebraic) isomorphism
\[
bv_{K}(\Omega,E)\cong\mathcal{O}^{exp}(U\setminus K,E)/\mathcal{O}^{exp}(U,E).
\]
In particular, we have
\[
      bv_{K}(\overline{\R},E)\cong\mathcal{O}^{exp}(\overline{\C}\setminus K)/\mathcal{O}^{exp}(\overline{\C},E)
\cong L(\mathcal{P}_{\ast}(K),E).
\]
\end{lem}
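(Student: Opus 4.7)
The plan is to construct the isomorphism via the restriction map
\[
\Phi\colon \mathcal{O}^{exp}(U\setminus K,E)/\mathcal{O}^{exp}(U,E)\to bv_{K}(\Omega,E),\quad
[F]\mapsto \bigl[F_{\mid (U\setminus\overline{\R})\cap\C}\bigr],
\]
and to verify it is well-defined, lands in $bv_{K}(\Omega,E)$, is injective, and is surjective, using throughout the support characterisation given in the paragraphs preceding the lemma. Well-definedness rests on two inclusions: first $\mathcal{O}^{exp}(U,E)\subset\mathcal{O}^{exp}(U\setminus K,E)$ (obvious since $U\setminus K\subset U$), so the source quotient makes sense and the image of any $\mathcal{O}^{exp}(U,E)$-representative vanishes in $bv(\Omega,E)$; and second, that the restriction of $F\in\mathcal{O}^{exp}(U\setminus K,E)$ to $(U\setminus\overline{\R})\cap\C$ lies in $\mathcal{O}^{exp}(U\setminus\overline{\R},E)$, which follows from $S_{n}(U)\subset S_{n}(U,\Omega\setminus K)$: points of $S_{n}(U)$ have $|\im(z)|>1/n$ and therefore already sit at distance $>1/n$ from $\R$, in particular from $(K\cap\R)\cup(\partial\Omega\cap\R)$.

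The support inclusion $\operatorname{supp}\Phi([F])\subset K$ is the heart of the argument and follows directly from the characterisation of $f_{\mid\Omega_{1}}=0$ recalled just above the lemma. Applied with $\Omega_{1}:=\Omega\setminus K$, what is needed is precisely that any $F\in\mathcal{O}^{exp}(U\setminus K,E)$ extends holomorphically across $\Omega\setminus K$ (which it does by definition, since $[(U\setminus\overline{\R})\cup(\Omega\setminus K)]\cap\C=(U\setminus K)\cap\C$) together with the growth bound $|F|_{U,\Omega\setminus K,n,\alpha}<\infty$ whenever $\pm\infty\in\Omega\setminus K$ (also built into the definition). Consequently $\Phi([F])_{\mid\Omega\setminus K}=0$, so the support of $\Phi([F])$ avoids the open set $\Omega\setminus K$ and must lie in $K$.

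For injectivity, suppose $\Phi([F])=0$, so the restriction of $F$ to $(U\setminus\overline{\R})\cap\C$ agrees with some $\widetilde{F}\in\mathcal{O}^{exp}(U,E)$. Since every connected component of $(U\setminus K)\cap\C$ meets $(U\setminus\overline{\R})\cap\C$ (no non-empty open subset of $\C$ is contained in $\R$), the identity theorem gives $F=\widetilde{F}$ on $(U\setminus K)\cap\C$, so $F$ is the restriction of an element of $\mathcal{O}^{exp}(U,E)$ and hence $[F]=0$ in the source. For surjectivity, let $f\in bv_{K}(\Omega,E)$ be represented by some $F_{0}\in\mathcal{O}^{exp}(U\setminus\overline{\R},E)$. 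Since $\operatorname{supp} f\subset K$ we have $f_{\mid\Omega\setminus K}=0$, and the support characterisation then supplies a holomorphic extension $F_{0}^{\ast}$ of $F_{0}$ to $(U\setminus K)\cap\C$ enjoying the growth bound defining $\mathcal{O}^{exp}(U\setminus K,E)$; by construction $\Phi([F_{0}^{\ast}])=f$.

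For the \emph{in particular} consequence, specialise to $\Omega=\overline{\R}$ and $U=\overline{\C}$. The first isomorphism then yields $bv_{K}(\overline{\R},E)\cong\mathcal{O}^{exp}(\overline{\C}\setminus K,E)/\mathcal{O}^{exp}(\overline{\C},E)$, where the two conceivable meanings of $\mathcal{O}^{exp}(\overline{\C}\setminus K,E)$ must be identified: with $\partial\overline{\C}=\varnothing$ in $\overline{\C}$ and $\partial\overline{\R}=\varnothing$ in $\overline{\R}$, the set $S_{n}(\overline{\C},\overline{\R}\setminus K)$ coincides (up to a bounded strip that is absorbed by passing from $n$ to a slightly larger integer) with the set $S_{n}(K)$ of \prettyref{def:smooth_weighted_space}, so both definitions produce the same vector space. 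The remaining isomorphism $\mathcal{O}^{exp}(\overline{\C}\setminus K,E)/\mathcal{O}^{exp}(\overline{\C},E)\cong L(\mathcal{P}_{\ast}(K),E)$ is then exactly \prettyref{thm:duality}. The main obstacle will be pedestrian bookkeeping of the various $S_{n}$- and $T_{n}$-type sets and transferring growth bounds through extensions; no new analytic input beyond \prettyref{lem:bv_indep_U}, \prettyref{thm:duality}, and the support characterisation is needed.
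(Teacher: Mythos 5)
Your proof is correct and follows essentially the same route as the paper: both arguments reduce the statement to the support characterisation developed immediately before the lemma together with \prettyref{lem:bv_indep_U} and \prettyref{thm:duality}, the only cosmetic difference being that you write out the restriction map from $\mathcal{O}^{exp}(U\setminus K,E)/\mathcal{O}^{exp}(U,E)$ into $bv_{K}(\Omega,E)$ and verify bijectivity in detail, whereas the paper phrases it as the identity map in the opposite direction. (For the record, the identification $S_{n}(\overline{\C},\overline{\R}\setminus K)=S_{n}(K)$ is an exact equality, so no passage to a larger index is needed.)
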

\begin{proof}
Using \prettyref{lem:bv_indep_U}, we represent $bv(\Omega,E)$ by $\mathcal{O}^{exp}(U\setminus\overline{\R},E)/\mathcal{O}^{exp}(U,E)$. 
Then the identity-map
\begin{gather*}
\{[F]\in\mathcal{O}^{exp}(U\setminus\overline{\R},E)/\mathcal{O}^{exp}(U,E)\;|\;\operatorname{supp}[F]\subset K\}
\to\mathcal{O}^{exp}(U\setminus K,E)/\mathcal{O}^{exp}(U,E),\\
[F]\mapsto[F],
\end{gather*}
is (well-)defined and surjective by the considerations above and obviously injective.

Now, let $\Omega\coloneq\overline{\R}$, set $\Omega_{1}\coloneq\overline{\R}\setminus K$ and choose $U\coloneq\overline{\C}$. 
We claim that the definition of the space $\mathcal{O}^{exp}(\overline{\C}\setminus K,E)$ in the sense above 
and in the sense of \prettyref{def:smooth_weighted_space} coincide (and therefore the spaces have the same symbol). 
Let $n\in\N$, $n\geq 2$. Then
\[
 \d(z,(\overline{\Omega}\cap\R)\setminus\Omega_{1})
=\d(z,K\cap\R)
\]
and
\[
\d(z,\C\cap\partial U)=\d(z,\varnothing)=\infty>\frac{1}{n}
\]
holds for $z\in\C$. Further,
\begin{equation*}
\left.\begin{aligned}
\pm\infty&\notin\overline{\R}\setminus K\\
-\infty&\notin\overline{\R}\setminus K\\
\infty&\notin\overline{\R}\setminus K\\
\pm\infty&\in\overline{\R}\setminus K\\
\end{aligned}
\right\}
\qquad \text{is equivalent to}\qquad
\left\{\begin{aligned}
\pm\infty&\in K\\
-\infty&\in K\\
\infty&\in K\\
\pm\infty&\notin K\\
\end{aligned}
\right.
\end{equation*}
and hence we obtain $S_{n}(\overline{\C},\overline{\R}\setminus K)=S_{n}(K)$. Thus the claim is proved. 
Therefore,
\[
      bv_{K}(\overline{\R},E)
\cong \mathcal{O}^{exp}(\overline{\C}\setminus K,E)/\mathcal{O}^{exp}(\overline{\C},E)
\cong L(\mathcal{P}_{\ast}(K),E)
\]
holds by \prettyref{thm:duality}, which proves the endorsement.
\end{proof}

We remark that this isomorphism induces a reasonable locally convex Hausdorff topology on $bv_{K}(\overline{\R},E)$ 
since $L(\mathcal{P}_{\ast}(K),E)$ has such a topology. 

As already mentioned, we are convinced that a reasonable theory of $E$-valued Fourier hyperfunctions (in one variable) 
should produce a flabby sheaf $\mathcal{F}$ on $\overline{\R}$ such that 
the set of sections supported by a compact subset $K\subset\overline{\R}$ coincides, 
in the sense of being isomorphic, with $L(\mathcal{P}_{\ast}(K),E)$ since the restricted sheaf $\mathcal{F}_{\mid\R}$ 
then satisfies the conditions of Doma\'nski and Langenbruch for a reasonable theory of $E$-valued hyperfunctions. 
In addition, the map $\mathfrak{F}\colon \mathcal{F}(\overline{\R})\to\mathcal{F}(\overline{\R})$, 
defined by $\mathfrak{F}\coloneq J^{-1}\circ\mathscr{F}_{\star}\circ J$, 
where $J\colon\mathcal{F}(\overline{\R})\to L(\mathcal{P}_{\ast}(\overline{\R}),E)$ is an isomorphism existing by assumption 
and $\mathscr{F}_{\star}$ the Fourier transformation of \prettyref{cor:Fourier-Trafo}, 
can be regarded as the Fourier transformation on the space of global sections and is an isomorphism.

If $E$ is sequentially complete and strictly admissible, the sheaves $bv(E)$ and $\mathcal{R}(E)$ satisfy this condition 
for a reasonable theory of $E$-valued Fourier hyperfunctions by \prettyref{thm:sheaf_flabby} 
and \prettyref{lem:supp_iso} (for $\mathcal{R}(E)$ remark that sheaf isomorphisms preserve supports, 
so the definition of a support in \prettyref{prop:traeger} b) was well-chosen). 
The next theorem confirms that the sufficient condition of $E$ being strictly admissible is also necessary 
for a reasonable theory of $E$-valued Fourier hyperfunctions in one variable 
if $E$ is an ultrabornological PLS-space (such spaces are complete) and describes further equivalent sufficient 
and necessary conditions.
We use its counterpart for vector-valued hyperfunctions \cite[Theorem 8.9, p.\ 1139]{D/L} in the proof.

\begin{thm}\label{thm:PA_necessary}
Let $E$ be a complex ultrabornological PLS-space. Then the following assertions are equivalent:
\begin{enumerate}
\item [a)] There is a flabby sheaf $\mathcal{F}$ on some open set $\varnothing\neq\Omega\subset\overline{\R}$ such that
 \begin{align*}
 \mathcal{F}_{K}(\Omega)\coloneq &\, \{T\in\mathcal{F}(\Omega)\;|\;\operatorname{supp}_{\mathcal{F}}(T)\subset K\}\\
                        \cong&\,  L(\mathcal{P}_{\ast}(K),E)\quad\text{for any compact}\;K\subset\Omega.
 \end{align*}
\item[b)] There is a flabby sheaf $\mathcal{F}$ on $\overline{\R}$ such that
 \begin{align*}
 \mathcal{F}_{K}(\overline{\R})\coloneq &\,\{T\in\mathcal{F}(\overline{\R})\;|\;\operatorname{supp}_{\mathcal{F}}(T)\subset K\}\\
                                \cong&\, L(\mathcal{P}_{\ast}(K),E)\quad\text{for any compact}\;K\subset\overline{\R}.
 \end{align*}
\item [c)] $E$ is strictly admissible.
\item [d)] $P(D)\colon\mathcal{C}^{\infty}(U,E)\to\mathcal{C}^{\infty}(U,E)$ is surjective for some (any) 
 elliptic linear partial differential operator $P(D)$ and some (any) open set $U\subset\R^{d}$ and some (any) $d\in\N$, $d\geq 2$.
\item [e)] $E$ has $(PA)$.
\end{enumerate}
\end{thm}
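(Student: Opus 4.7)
The plan is a circular proof e) $\Rightarrow$ c) $\Rightarrow$ b) $\Rightarrow$ a) $\Rightarrow$ e), together with the auxiliary equivalence d) $\Leftrightarrow$ e). The three implications e) $\Rightarrow$ c) $\Rightarrow$ b) $\Rightarrow$ a) are essentially bookkeeping: e) $\Rightarrow$ c) is \prettyref{thm:examples_strictly_admiss} c); c) $\Rightarrow$ b) combines \prettyref{thm:sheaf_flabby} (flabbiness of $\mathcal{R}(E)$ on $\overline{\R}$) with the support identification $\mathcal{R}_K(\overline{\R}) \cong L(\mathcal{P}_\ast(K),E)$ from \prettyref{lem:supp_iso}; and b) $\Rightarrow$ a) is trivial by taking $\Omega = \overline{\R}$.

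The substantive implication is a) $\Rightarrow$ e). Given the flabby sheaf $\mathcal{F}$ on a non-empty open $\Omega \subset \overline{\R}$, I will restrict to the non-empty open subset $\widetilde{\Omega} := \Omega \cap \R$ of $\R$; non-emptiness of $\widetilde{\Omega}$ uses the very definition of the topology on $\overline{\R}$, since every open neighbourhood of $\pm\infty$ must contain an interval meeting $\R$. The restriction $\mathcal{F}_{\mid\widetilde{\Omega}}$ is again flabby, and the crucial observation is that for compact $K \subset \widetilde{\Omega}$ the space $\mathcal{P}_\ast(K)$ coincides with the germ space $\mathscr{A}(K)$ (noted right before \prettyref{thm:duality}), so the hypothesis of a) reads $\mathcal{F}_K(\Omega) \cong L(\mathscr{A}(K),E)$. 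A standard excision argument for flabby sheaves, using that $\Omega \setminus \widetilde{\Omega} \subset \{-\infty,\infty\}$ is disjoint from any compact $K \subset \R$, transfers this to $(\mathcal{F}_{\mid\widetilde{\Omega}})_K(\widetilde{\Omega}) \cong L(\mathscr{A}(K),E)$. Thus $\mathcal{F}_{\mid\widetilde{\Omega}}$ realises a reasonable theory of $E$-valued hyperfunctions on the non-empty open subset $\widetilde{\Omega}$ of $\R$ in the sense of Doma\'nski and Langenbruch, and \cite[Theorem 8.9, p.~1139]{D/L} then forces the ultrabornological PLS-space $E$ to have $(PA)$.

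For d) $\Leftrightarrow$ e), I will invoke the Doma\'nski--Langenbruch splitting result \cite[Corollary 3.9, p.~1112]{D/L}: for an ultrabornological PLS-space $E$, having $(PA)$ is equivalent to surjectivity of every (equivalently, some) elliptic $P(D)$ on $\mathcal{C}^\infty(U,E)$ for every (some) open $U \subset \R^d$, $d \geq 2$. This rests on the fact that $\ker P(D)$ in $\mathcal{C}^\infty(U)$ is a nuclear Fr\'echet space with property $(\Omega)$ by elliptic regularity, which is precisely the hypothesis needed on the kernel for the splitting theorem to apply.

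The main obstacle I expect is the excision step in a) $\Rightarrow$ e): one must verify carefully that the notion of ``support in $K$'' in the original sheaf on $\Omega$ and in the restricted sheaf on $\widetilde{\Omega}$ are compatible when $K \subset \R$, and that the isomorphism with $L(\mathcal{P}_\ast(K),E)$ persists under this restriction. The specific geometry of $\overline{\R}$ helps, because $\Omega \setminus \widetilde{\Omega}$ consists of at most two points and these lie outside any compact $K \subset \R$, so flabbiness gives unique extensions of $K$-supported sections between $\mathcal{F}(\widetilde{\Omega})$ and $\mathcal{F}(\Omega)$ by extending by zero across $\{\pm\infty\} \cap \Omega$.
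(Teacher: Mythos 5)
Your proposal is correct and follows essentially the same route as the paper: the same cycle $e)\Rightarrow c)\Rightarrow b)\Rightarrow a)\Rightarrow e)$ with $d)\Leftrightarrow e)$ handled separately via \cite[Corollary 3.9]{D/L}, and the key step $a)\Rightarrow e)$ done by restricting the sheaf to $\Omega\cap\R$, using $\mathcal{P}_{\ast}(K)=\mathscr{A}(K)$ for compact $K\subset\R$, and invoking \cite[Theorem 8.9]{D/L}. Your extra care about the excision/support-compatibility step (which is really the sheaf gluing axiom rather than flabbiness) only makes explicit what the paper leaves implicit.
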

\begin{proof}
$e)\Leftrightarrow d)$: \cite[Corollary 4.1, p.\ 1113]{D/L} resp.\ \cite[Corollary 3.9, p.\ 1112]{D/L}\\
$e)\Rightarrow c)$: \prettyref{thm:examples_strictly_admiss} c)\\
$c)\Rightarrow b)$: \prettyref{thm:sheaf_flabby} and \prettyref{lem:supp_iso}\\
$b)\Rightarrow a)$: Obvious with $\Omega\coloneq\overline{\R}$.\\
$a)\Rightarrow e)$: Let there be a flabby sheaf $\mathcal{F}$ on some open set $\varnothing\neq\Omega\subset\overline{\R}$ 
such that
\begin{align*}
\mathcal{F}_{K}(\Omega)&=\{T\in\mathcal{F}(\Omega)\;|\;\operatorname{supp}_{\mathcal{F}}(T)\subset K\}\\
                       &\cong L(\mathcal{P}_{\ast}(K),E)\quad\text{for any compact}\;K\subset\Omega.
\end{align*}
Then the restriction $\mathcal{F}_{\mid\Omega\cap\R}$ of $\mathcal{F}$ to $\Omega\cap\R$ is a flabby sheaf as well such that
\begin{align*}
  (\mathcal{F}_{\mid\Omega\cap\R})_{K}(\Omega\cap\R)
&=\bigl\{T\in\mathcal{F}_{\mid\Omega\cap\R}(\Omega\cap\R)\;|\;\operatorname{supp}_{\mathcal{F}_{\mid\Omega\cap\R}}(T)\subset K\bigr\}\\
&\cong L(\mathscr{A}(K),E)\quad\text{for any compact}\;K\subset(\Omega\cap\R)
\end{align*}
since $\mathcal{P}_{\ast}(K)=\mathscr{A}(K)$ for every compact set $K\subset\R$. 
By virtue of \cite[Theorem 8.9, p.\ 1139]{D/L} this implies that $E$ has $(PA)$.
\end{proof} 

Due to the preceding theorem a reasonable theory of $E$-valued Fourier hyperfunctions (in one variable) 
does not exist for the ultrabornological PLS-spaces $E$ from \prettyref{ex:PLS_non_PA} a).

\begin{rem}\label{rem:itos_luecke_2}
It follows from \prettyref{thm:PA_necessary} ``$d) \Leftrightarrow e)$'' with $P(D)=\overline{\partial}$ and the fact that 
ultrabornological PLS-spaces are complete, in particular, quasi-complete that 
\cite[Theorem 3.1, p.\ 989]{Ito1982} (Dolbeaut-Grothendieck resolution of $^{E}{\widetilde{\mathcal{O}}}^{p}$, 
cf.\ \cite[2.1.3 Theorem, p.\ 76]{Ito1984}) is not correct for $p=n=1$ and ultrabornological PLS-spaces $E$ without $(PA)$, 
for instance, for the spaces $E$ from \prettyref{ex:PLS_non_PA} a).
\end{rem}

Clearly, there are still some open problems. 

\begin{prob}\fakephantomsection\label{prob:open}
\begin{enumerate} 
 \item[(i)] Is \emph{strict admissibility} a necessary condition for the existence 
 of a reasonable theory of $E$-valued Fourier hyperfunctions for general sequentially complete $\C$-lcHs $E$? 
 In particular, does such a reasonable theory exist for the spaces $E$ from \prettyref{ex:PLS_non_PA} b)? 
 \item[(ii)] Are strict admissibility and admissibility equivalent?
 \item[(iii)] Is strict admissibility of a sequentially complete $E$ equivalent to belonging to the classes of spaces 
 from \prettyref{thm:examples_strictly_admiss}?
 \item[(iv)] Do the results for $E$-valued Fourier hyperfunctions in one variable ($d=1$) carry over to several variables ($d\geq 2$)? 
\end{enumerate}
\end{prob}

One way to tackle \prettyref{prob:open} (iv) might be to adapt the approach 
from vector-valued hyperfunctions \cite{D/L} as described in 
\cite[Chapter 7, p.\ 153--155]{ich}. Maybe, another way is to use the heat method 
developed by Matsuzawa in \cite{Mat1,Mat2,Mat3}, 
namely, to represent $\C$-valued hyperfunctions as boundary values of solutions of the heat equation, 
which was transferred to $\C$-valued Fourier hyperfunctions in
\cite{ChungChungKim1994,Dhungana_2007,KawaiMatsuzawa1989,KimChungKim1993}.

\subsection*{Acknowledgements}
The present paper contains the main result of my PhD thesis \cite{ich}, 
written under the supervision of M.\ Langenbruch at the University of Oldenburg. 
I am deeply grateful to him for his support and advice. 
Further, it is worth to mention that some of the results appearing in the PhD thesis are essentially due to him. 
I am much obliged to the late P.\ Doma\'nski who helped me to understand PLS-spaces and the property $(PA)$ 
during a stay in Oldenburg. I am thankful to A.\ Defant who helped me with nuclear spaces and I.\ Shestakov 
for fruitful discussions.

\bibliography{biblio}
\bibliographystyle{plainnat}
\end{document}